\newcommand{\Z}{{\mathbb Z}}
\newcommand{\R}{{\mathbb R}}
\newcommand{\N}{{\mathbb N}}
\newcommand{\Om}{\Omega}
\newcommand{\pip}{\varphi}
\newcommand{\eps}{\varepsilon}
\newcommand{\dmu}{d\mu}
\newcommand{\loc}{{\mbox{\scriptsize{loc}}}}
\newcommand{\bdry}{\partial}
\newcommand{\8}{\infty}
\DeclareMathOperator{\diam}{diam}
\DeclareMathOperator{\Lip}{Lip}
\DeclareMathOperator{\lip}{lip}
\DeclareMathOperator{\LIP}{LIP}
\DeclareMathOperator{\var}{osc}
\DeclareMathOperator{\supp}{supp}
\DeclareMathOperator{\rad}{\textrm{rad}}
\def\vint_#1{\mathchoice
	{\mathop{\vrule width 5pt height 3 pt depth -2.5pt
			\kern -9pt \kern 1pt\intop}\nolimits_{\kern -5pt{#1}}}
	{\mathop{\vrule width 5pt height 3 pt depth -2.6pt
			\kern -6pt \intop}\nolimits_{\kern -3pt{#1}}}
	{\mathop{\vrule width 5pt height 3 pt depth -2.6pt
			\kern -6pt \intop}\nolimits_{\kern -3pt{#1}}}
	{\mathop{\vrule width 5pt height 3 pt depth -2.6pt
			\kern -6pt \intop}\nolimits_{\kern -3pt{#1}}}}
\theoremstyle{plain}
\newtheorem{theorem}{Theorem}[section]
\newtheorem{lemma}[theorem]{Lemma}
\newtheorem{proposition}[theorem]{Proposition}
\newtheorem{theorema}{Theorem}
\theoremstyle{definition}
\newtheorem{definition}[theorem]{Definition}
\newtheorem{remark}[theorem]{Remark}
\newtheorem{Example}[theorem]{Example}
\numberwithin{equation}{section}
\title{Asymptotic behavior of BV functions and sets of \\
finite perimeter in metric measure spaces}
\author{Sylvester Eriksson-Bique, James T. Gill,\\ Panu Lahti, and Nageswari Shanmugalingam
\footnote{The first author was supported by NSF grant \#DMS-1704215.
The third author was partially supported by the Finnish Cultural Foundation.
The fourth author was partially supported by the NSF grant \#DMS-1500440 (U.S.).
The motivation for this
research came from a recommendation by Tatiana Toro to study asymptotic behavior of sets of finite perimeter, during
the fourth author's visit to University of Washington at Seattle in 2010; the authors wish to thank her for the valuable 
comments that motivated this research. Part of the research was done during the visit
of the second and third authors to University of Cincinnati, and during the period the first, third, and fourth authors were visiting
Link\"oping University supported by a departmental grant from the 
Wallenberg Foundation; they wish to thank those institutions for their kind hospitality. We also wish to thank
Tapio Rajala for valuable discussions that lead to Example~\ref{ex:nonmin}.}}
\begin{document}
\maketitle

\noindent{\small
{\bf Abstract:} In this paper, we study the asymptotic behavior of BV functions in complete 
metric measure
spaces equipped with a doubling measure supporting a $1$-Poincar\'e inequality. We show that 
at almost every point $x$ outside the Cantor and jump parts of a BV function, the asymptotic limit
of the function is a Lipschitz continuous function of least gradient on a tangent space to the metric
space based at $x$. We also show that, at co-dimension $1$ Hausdorff measure
almost every measure-theoretic boundary point of a set $E$ of finite perimeter, there is an asymptotic
limit set $(E)_\8$ corresponding to the asymptotic expansion of $E$  and that 
every such asymptotic limit $(E)_\8$ is
a quasiminimal set of finite perimeter. 
We also show that the perimeter measure of $(E)_\8$ is Ahlfors co-dimension $1$ regular.
}

\bigskip

\noindent
{\small \emph{Key words and phrases}: Bounded variation, finite perimeter, asymptotic limit,
doubling measure, Poincar\'e inequality, least gradient function.
}

\medskip

\noindent
{\small Mathematics Subject Classification (2010): 30L99, 26A45, 31E05, 43A85.
}

\section{Introduction}

The classical notion of differentiability  for a  function $f$ on  a subset of Euclidean space 
$\Om\subset\R^n$  at a point 
$x\in\Om$ is that the graph of $f$ should, at $(x,f(x))\in\R^n\times\R$, 
asymptotically 
 approach an $n$-dimensional hyperplane in
$\R^{n+1}$. 
In other words, the function $f$ behaves asymptotically like an affine function. This notion has been extended to mappings between domains
in Riemannian manifolds in the study of differential geometry. 

The seminal work of Cheeger~\cite{Che} 
extended the above notion of affine approximation to 
the realm of metric measure spaces,
with \emph{generalized linear functions} defined on measured Gromov-Hausdorff tangent 
spaces playing the role of affine functions.
It was shown there that, if the space is complete, the measure is doubling, and 
the space supports a $p$-Poincar\'e inequality for some $1\le p<\infty$, then
 every Lipschitz function $f$ on the metric space is asymptotically generalized linear
 at almost every point $x$ in the space. More specifically, we have the following: let $X_\infty$ be obtained as a pointed
measured Gromov-Hausdorff limit of scaled versions $(X_n,d_n,x,\mu_n)$ of the metric measure space
$(X,d,\mu)$ with $x\in X$. 
In considering corresponding scaled 
versions $f_n\colon X_n\to\R$ of $f\colon X\to\R$, where
\[
f_n(y):=\frac{f(y)-f(x)}{r_n}
\]
with $\{r_n\}_{n\in \N}$
a sequence of positive numbers decreasing to zero 
which form the scales associated with the metric
$d_n:=r_n^{-1}d$ in the Gromov-Hausdorff limit,
the sequence of functions $f_n$ converges to a limit function $f_\infty\colon X_\infty\to\R$ (after passing to a subsequence if necessary).
Cheeger proved that this asymptotic limit function $f_\infty$ is
a generalized linear function on $X_\infty$.  Here by
a generalized linear function, the paper~\cite{Che} means a function that is $p$-harmonic on 
$X_\infty$ with a constant function
as its minimal $p$-weak upper gradient.


In this paper, we extend the study of asymptotic behavior of Lipschitz functions in~\cite{Che} to 
functions of bounded variation in complete metric measure spaces equipped with a doubling
measure supporting a $1$-Poincar\'e inequality. 
The following theorems give a summary of the principal results of this paper; the precise versions
can be found in the statements of the corresponding theorems in Sections~4--6. 

\begin{theorema}[Theorems \ref{thm:main1} and \ref{thm:tangentharm}] 
Let $u$ be a function of bounded variation on $X$.  For $\mu$-a.e. $x \in X$ and any 
tangent space $(X_\infty, d_\infty, x_\infty, \mu_\infty)$  of $X$ based at $x$,
any limit function $u_\infty$  as described above is $1$-harmonic (also known as
function of least gradient) and has quasi-constant minimal $1$-weak upper gradient. 
\end{theorema}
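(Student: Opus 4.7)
The strategy is to reduce the theorem to Cheeger's generalized linear function theorem~\cite{Che} by showing that, near a generic point $x$, the BV function $u$ agrees with a Lipschitz function on a set of density $1$ at $x$; the blowup of $u$ at $x$ then coincides with the blowup of that Lipschitz function, to which Cheeger's theorem applies directly.

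\emph{Good points, rescaling, and compactness.} At $\mu$-a.e.\ $x$ the following hold: (i) $x$ is a Lebesgue point of $u$ and of the Radon--Nikodym density $\rho:=d|Du|^{\mathrm{ac}}/d\mu$; (ii) the singular part of $|Du|$ has vanishing upper density at $x$, so $x$ lies outside the Cantor and jump sets; (iii) for some (hence every) $L>\rho(x)$, $x$ is a density-$1$ point of the level set $E_L:=\{y: M|Du|(y)\le L\}$ of the noncentered maximal function $M|Du|(y):=\sup_{r>0}|Du|(B(y,r))/\mu(B(y,r))$; and (iv) $x$ admits a pointed measured Gromov--Hausdorff tangent $(X_\infty,d_\infty,x_\infty,\mu_\infty)$ along some $r_n\downarrow 0$. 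Items (i)--(iii) follow from Lebesgue differentiation together with the weak-$L^1$ bound for $M$; (iv) is standard. Rescale by $u_n(y):=(u(y)-u(x))/r_n$ on the pointed rescaled spaces $(X,r_n^{-1}d,\mu_n,x)$ with $\mu_n:=\mu/\mu(B(x,r_n))$. The $1$-Poincar\'e inequality and doubling, combined with (i), give uniform BV bounds on $u_n$ on balls of each fixed radius, and a BV compactness theorem under pointed measured Gromov--Hausdorff convergence extracts a subsequence with $u_n\to u_\infty$ in $L^1_{\mathrm{loc}}(X_\infty)$.

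\emph{Lipschitz approximation and identification with Cheeger's blowup.} Using (iii) and the BV Lusin--Lipschitz approximation theorem available in PI spaces, produce an $L$-Lipschitz function $\varphi\colon X\to\R$ with $\varphi=u$ on $E_L$. Set $\varphi_n(y):=(\varphi(y)-\varphi(x))/r_n$; the uniform Lipschitz bound, Arzel\`a--Ascoli, and the Gromov--Hausdorff convergence extract a further subsequence along which $\varphi_n\to\varphi_\infty$ locally uniformly on $X_\infty$. The density-$1$ property of $E_L$ at $x$ together with (i) forces $\int_{B(x_\infty,R)}|u_n-\varphi_n|\,d\mu_n\to 0$ for every $R>0$, so the $L^1_{\mathrm{loc}}$ limits coincide and $u_\infty=\varphi_\infty$. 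Cheeger's theorem then identifies $\varphi_\infty$ as a generalized linear function on $(X_\infty,d_\infty,x_\infty,\mu_\infty)$: it is $1$-harmonic (i.e.\ of least gradient on $X_\infty$) and its minimal $1$-weak upper gradient is a positive constant. Both conclusions transfer immediately to $u_\infty$, giving the quasi-constancy and least-gradient assertions.

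\emph{Main obstacle.} The decisive technical step is the Lusin--Lipschitz approximation at the BV level: one must produce, at $\mu$-a.e.\ $x$ outside the Cantor and jump sets, a Lipschitz extension $\varphi$ of $u|_{E_L}$ whose Lipschitz constant $L$ is controlled by the pointwise density $\rho(x)$ of the variation measure. In Euclidean space this comes from the Calder\'on--Zygmund decomposition for BV functions, but in the metric PI setting it requires pointwise Poincar\'e-type chaining estimates together with the McShane extension, and a careful treatment of the singular part of $|Du|$ to cleanly exclude the Cantor and jump sets. A secondary difficulty is the BV compactness under pointed measured Gromov--Hausdorff convergence invoked in the first step: although analogous to the Sobolev compactness used by Cheeger, it must be established with BV-specific tools (lower semicontinuity of the variation measure along converging rescalings and equi-integrability from the Poincar\'e inequality).
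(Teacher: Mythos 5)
Your overall strategy---Lusin--Lipschitz approximation at a generic point, passing to the blow-up, and invoking Cheeger---is essentially the one the paper uses (Proposition~\ref{prop:Lipschitz continuity around a point} plays the role of your level set $E_L$, and the ``adapted'' maps $\phi_n$ handle the exceptional set). But there is a genuine gap in your final step, and it is precisely the step that goes beyond Cheeger.

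You write ``Cheeger's theorem then identifies $\varphi_\infty$ as a generalized linear function\ldots: it is $1$-harmonic (i.e.\ of least gradient on $X_\infty$) and its minimal $1$-weak upper gradient is a positive constant.'' Cheeger's theorem does \emph{not} give this. What Cheeger proves is that $\varphi_\infty$ is $p$-harmonic for the exponents $p>1$ for which a $p$-Poincar\'e inequality holds, with a constant minimal $p$-weak upper gradient. The passage from ``$p$-harmonic for every $p>1$'' to ``$1$-harmonic / function of least gradient'' is a separate theorem (the paper invokes~\cite[Theorem 3.3]{KLLS}, which requires a nontrivial stability argument as $p\to 1^+$). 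Without some such input your proof does not establish the least-gradient property, which is the heart of Theorem~A. Similarly, the claim that the minimal $1$-weak upper gradient of $u_\infty$ is a constant overclaims: the paper only establishes that it is \emph{quasi-constant}, i.e.\ pinched between $L/(4C_0)$ and $L$ for the $p$-weak constant $L$ (see Theorem~\ref{thm:main1}), and this already requires a separate telescoping argument. You should either cite the relevant $p\to 1$ result or supply an argument of that kind.

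Two smaller remarks. First, your condition (iii), that $x$ is a density-$1$ point of $E_L=\{M|Du|\le L\}$ for every $L>\rho(x)$, does not hold literally for the \emph{global} noncentered maximal function: points $y$ near $x$ see large-scale averages that are only controlled up to a doubling constant, so you can only guarantee density $1$ for $L > C\rho(x)$ with $C=C(C_d,C_P)$. This is consistent with the paper's Lipschitz constant $Cg(x)$ and with the final ``quasi-constant'' conclusion, but it should be stated that way. Second, you need the exceptional set to have \emph{density} zero at $x$ (not merely measure zero) and you need to reconcile this with the non-measurable comparison maps $\phi_n$; the paper handles this by restricting to adapted maps, and something analogous is needed in your argument to justify $\int_{B(x_\infty,R)}|u_n - \varphi_n|\,d\mu_n \to 0$.
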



The most fundamental of BV functions are characteristic functions of sets of finite perimeter.
For these functions, the most interesting behavior happens solely at their jump points. Here the 
study of asymptotics is different, see e.g.~\cite[Theorem~5.13]{EG} in the Euclidean setting. 
Similarly, for general BV functions $u$, the approach of scaling the function as described above works
well when considering points in $X$ that asymptotically see neither the Cantor nor the jump parts of the
variation measure $\Vert Du\Vert$ of $u$, but it is not helpful in the study of asymptotic behavior of $u$ at
points in its jump set $S_u$.  
Instead, an approach based on weak* limits of measures, which can also be used to define the limit function $u_\infty$ as in 
Theorem~\ref{thm:main3}, is more in line with studying the
behavior of $u$ at points in the jump set $S_u$ of $u$ and gives an alternative approach to Theorem~A.
This measure-theoretic approach 
is applied to characteristic functions of sets of finite perimeter in Sections~5 and~6  and the main conclusions are 
described in Theorems~B and C below. 

\begin{theorema}[Theorem \ref{thm:main3}] 
Let $E \subset X$ be of finite perimeter $P(E, \cdot)$. Then for 
$P(E,\cdot)$-a.e. point, appropriately scaled versions of $P(E, \cdot)$ converge to a measure on 
$X_\infty$ that is comparable to the co-dimension $1$ Hausdorff measure.
\end{theorema}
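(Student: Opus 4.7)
The plan is to combine the known Ahlfors codimension-$1$ regularity of $P(E,\cdot)$ at $P(E,\cdot)$-a.e.\ point with weak-$\ast$ compactness of Radon measures and BV compactness in the rescaled spaces. I would fix a $P(E,\cdot)$-a.e.\ point $x$ at which one simultaneously has the two-sided Ahlfors bound
$$c_1\,\frac{\mu(B(x,r))}{r} \leq P(E, B(x,r)) \leq c_2\,\frac{\mu(B(x,r))}{r} \qquad \text{for all small }r>0,$$
and a sequence $r_n \searrow 0$ for which $(X, r_n^{-1}d, x, \mu/\mu(B(x,r_n)))$ converges in the pointed measured Gromov--Hausdorff sense to a tangent $(X_\infty, d_\infty, x_\infty, \mu_\infty)$. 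The Ahlfors bound used here is the metric-space analog due to Ambrosio and to Kinnunen--Korte--Shanmugalingam--Tuominen.

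Introduce the rescaled perimeter measures $\nu_n := (r_n/\mu(B(x,r_n)))\, P(E,\cdot)$ on $(X, r_n^{-1}d)$. The Ahlfors upper bound at $x$, combined with the doubling property of $\mu$ and a standard ball-covering argument, yields for each $R>0$ a uniform estimate $\nu_n(B_{d_n}(y,\rho)) \leq C(R)\,\mu_n(B_{d_n}(y,\rho))/\rho$, valid for every $y$ in the $d_n$-ball of radius $R$ around $x$ and every $\rho\leq R$. This uniform local boundedness gives weak-$\ast$ precompactness, and we extract a Radon limit $\nu_\infty$ on $X_\infty$, which inherits the upper bound.

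For the matching lower bound, apply the BV compactness theorem (Miranda, with its metric-space refinements) to $\chi_E$ in the rescaled spaces. Along a further subsequence the rescaled characteristic functions converge in $L^1_{\loc}(X_\infty)$ to $\chi_{(E)_\infty}$ for some set $(E)_\infty\subset X_\infty$ of finite perimeter, and lower semicontinuity of total variation yields $P((E)_\infty,\cdot)\leq\nu_\infty$. Since pointed mGH limits preserve both doubling and the $1$-Poincar\'e inequality with the same constants, the Ahlfors codim-$1$ regularity theorem applies to $(E)_\infty$ in $X_\infty$, producing the pointwise lower bound $P((E)_\infty, B(y,r))\geq c_1'\,\mu_\infty(B(y,r))/r$ at $P((E)_\infty,\cdot)$-a.e.\ $y$. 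Chaining this with $P((E)_\infty,\cdot)\leq\nu_\infty$ and combining with the earlier upper bound produces the two-sided comparison $\nu_\infty \asymp \mathcal{H}_\infty$ of Theorem~B, where $\mathcal{H}_\infty$ denotes the codimension-$1$ Hausdorff measure on $X_\infty$.

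The main obstacle I foresee is ensuring that $\nu_\infty$ carries no extra mass outside the support of $P((E)_\infty,\cdot)$; without this, the pointwise lower bound on $P((E)_\infty,\cdot)$ would fail to propagate to $\nu_\infty$ at every ball. The upper bound $\nu_\infty \leq C\mathcal{H}_\infty$ is essential here, as it forces $\nu_\infty$ to be absolutely continuous with respect to $\mathcal{H}_\infty$ with bounded density. Combined with a Lebesgue-point choice of $x$ for the density of $P(E,\cdot)$ and with upper Ahlfors regularity of $P((E)_\infty,\cdot)$ in $X_\infty$, this should rule out concentration of $\nu_\infty$ outside $\partial^*(E)_\infty$. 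Executing this step cleanly, in a setting where weak-$\ast$ limits of perimeter measures can \emph{a priori} strictly exceed the perimeter of the limit set, is the main technical challenge.
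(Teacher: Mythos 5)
Your high-level strategy—rescale $P(E,\cdot)$, extract a weak-$\ast$ limit $\nu_\infty$, compare it from above to $\mathcal H_\infty$ via growth bounds and from below to $P((E)_\infty,\cdot)$ via lower semicontinuity—matches the shape of the paper's argument (Theorems~\ref{thm:Main2} and~\ref{thm:main3}). But two of the steps you treat as routine are actually the substance of the proof, and as written they have gaps.

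First, the uniform local bound $\nu_n(B_{d_n}(y,\rho)) \le C(R)\,\mu_n(B_{d_n}(y,\rho))/\rho$ for all $y$ near $x$ and all small $\rho$ does not follow from the Ahlfors bound at the single point $x$ together with doubling and a covering argument. Covering only gives $P(E, B(y,\rho r_n)) \le P(E, B(x, (R+\rho)r_n)) \le C(R)\,\mu(B(x,r_n))/r_n$, i.e.\ a bound on the total mass $\nu_n(B_n(x,R))$; it does not control the density at the scale $\rho$, because $\mu_n(B_n(y,\rho))/\rho$ can be much smaller than the right-hand side. The total-mass bound does suffice for weak-$\ast$ precompactness, but it is not enough to conclude that $\nu_\infty(B(z,\rho)) \lesssim \mu_\infty(B(z,\rho))/\rho$ at every $z$ in $\mathrm{supp}\,\nu_\infty$. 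What the paper does instead is refine the choice of $x$: it introduces the sets $G_m^*\subset \Sigma_\gamma E$ of points where the two-sided growth bounds and the relative isoperimetric control~\eqref{perimetercomp} hold with \emph{uniform} quantitative rate at all scales below $1/m$, picks $x$ to be a point of $P(E,\cdot)$-density one for some $G_m^*$, and then proves (Proposition~\ref{prop:perimeterinfinitylb}) that every point in $\mathrm{supp}\,\pi_\infty$ is a limit of rescalings of $G_m^*$ points. It is this density-point/approximation mechanism, not a covering argument, that transfers the growth bound from $x$ to arbitrary points of the support.

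Second, you correctly flag that ruling out mass of $\nu_\infty$ off $\partial^*(E)_\infty$ is the crux, but the sketch you offer for it ($\nu_\infty\le C\mathcal H_\infty$ forces absolute continuity; combine with a Lebesgue-point choice of $x$ and upper Ahlfors bounds for $P((E)_\infty,\cdot)$) does not close. The inequality $\nu_\infty\le C\mathcal H_\infty$ is vacuous for controlling mass on open sets, because $\mathcal H_\infty$ of any ball is infinite in this context; it cannot prevent $\nu_\infty$ from charging an open subset of $(E)_\infty$. The paper's resolution is again quantitative: at $G_m^*$ points the inequality~\eqref{perimetercomp} bounds $r\,P(E,B(z,r))$ by $K\min\{\mu(B(z,r)\cap E),\,\mu(B(z,r)\cap E^c)\}$. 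Passing to the limit, at a point $z\in(E)_\infty$ the minority volume fraction tends to zero, so $\pi_\infty(B(z,r))/( \mu_\infty(B(z,r))/r)$ is eventually smaller than the fixed lower bound that every point of $\mathrm{supp}\,\pi_\infty$ must satisfy; this contradiction shows $z\notin\mathrm{supp}\,\pi_\infty$. This is a specific use of the relative isoperimetric inequality at nearby boundary points, not of abstract absolute continuity with respect to $\mathcal H_\infty$. Without it the argument stalls precisely at the step you identify as the main technical challenge. As a smaller point, the comparison in the theorem is with $\mathcal H_\infty$ \emph{restricted to} $\partial^*(E)_\infty$, and the inequality $P((E)_\infty,\cdot)\le\nu_\infty$ that you obtain from lower semicontinuity also requires some care in the Gromov--Hausdorff setting; the paper establishes it via an explicit discrete convolution construction rather than an abstract BV compactness appeal.
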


In $\R^n$, the corresponding limits are not just $(n-1)$-dimensional, but are hyperplanes
that are boundaries of sets whose characteristic functions are functions of least gradient, that is,
of minimal boundary surface. In the metric setting, we obtain an analogue with quasiminimal 
sets playing the role of hyperplanes and minimal boundary surfaces.


\begin{theorema}[Theorem \ref{thm:minimaltangent}] 
Let $E\subset X$ be of finite perimeter. Then, with respect to the co-dimension $1$ Hausdorff measure,
almost every point $x$ on the measure-theoretic boundary of $E$ satisfies the following properties: fixing a (pointed)
tangent space $(X_\8, d_\8,x_\8,\mu_\8)$ arising as a Gromov-Hausdorff limit of the scaled sequence
$(X_n,d_n,x,\mu_n)$, and by passing to a subsequence if necessary we obtain that
\begin{itemize}
\item the sequence of measures $\chi_E\, d\mu_n$ on $X_n$ converges weakly* to a measure $\mu_\8^E$ on $X_\8$,
\item this limit measure is absolutely continuous with respect to $\mu_\8$, 
\item there is a set $(E)_\8\subset X_\8$ of finite perimeter such that $d\mu_\8^E=\chi_{(E)_\8}\, d\mu_\8$,
\item the set $(E)_\8$ is of quasiminimal boundary surface {\rm(}see Definition~\ref{def:Q-min} or~\cite{KKLS}{\rm)}, and
\item the measure described in Theorem~B is supported on the boundary of $(E)_\8$, and is comparable to the
perimeter measure $P((E)_\8,\cdot)$ of $(E)_\8$.
\end{itemize}
\end{theorema}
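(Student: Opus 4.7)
The plan is to establish the five bullet points by combining Theorem B with classical BV compactness and geometric measure theory tools adapted to the Gromov--Hausdorff limit setting. I fix $x$ in the intersection of the full-measure set from Theorem B, the set of measure-theoretic boundary points at which $\chi_E$ has well-defined Lebesgue-type densities, and a set on which the scaled ambient measures $d\mu_n$ converge weakly* to $d\mu_\8$ on a tangent $(X_\8, d_\8, x_\8, \mu_\8)$. On this intersection all required density, Ahlfors regularity, doubling, and Poincar\'e inequality data are available.

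For the first three bullets, local uniform finiteness of $\chi_E\, d\mu_n$ (since $\chi_E\, d\mu_n \le d\mu_n$) together with Banach--Alaoglu on locally finite Radon measures yields a weak* subsequential limit $\mu_\8^E$ on $X_\8$. Passing the inequality through the limit gives $\mu_\8^E \le \mu_\8$, hence absolute continuity with a Radon--Nikodym density $f \in [0,1]$. To force $f \in \{0,1\}$, I identify $f(y)$ for $\mu_\8$-a.e.\ $y \in X_\8$ with a limiting density of $E$ along Gromov--Hausdorff approximants $x_n^y \in X$ of $y$ at physical radii comparable to $r_n\cdot d_\8(y, x_\8)$, via a diagonal argument coupled with Lebesgue differentiation on $X_\8$. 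Since the measure-theoretic boundary of $E$ carries $\mu$-measure zero, this limit is $0$ or $1$ at $\mu_\8$-a.e.\ $y$, yielding $d\mu_\8^E = \chi_{(E)_\8}\, d\mu_\8$ for a $\mu_\8$-measurable $(E)_\8 \subset X_\8$.

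Next I establish that $(E)_\8$ has finite perimeter, with $P((E)_\8, \cdot)$ comparable to the measure from Theorem B, and is quasiminimal. Finite perimeter and the upper bound follow from lower semicontinuity of total variation under weak* convergence, combined with the uniform $1$-Poincar\'e inequality and doubling that the scaled spaces $(X_n,d_n,\mu_n)$ inherit from $(X,d,\mu)$; after the proper normalization this gives $P((E)_\8, B(y,R))$ controlled by $\liminf_n$ of the suitably rescaled $P(E, B(x, r_n R))$, which by Theorem B is of order $\mu_\8(B(y,R))/R$. The reverse inequality, yielding comparability with the Theorem B measure, uses the Ahlfors co-dimension $1$ lower bound together with the relative isoperimetric inequality in $X_\8$. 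For quasiminimality, given any competitor $F$ differing from $(E)_\8$ in a precompact open $U \Subset X_\8$, I transfer $F$ back to $X$ through the Gromov--Hausdorff approximations, producing sets $F_n$ agreeing with $E$ outside a small neighborhood of $x$. Combining the lower Ahlfors bound $P(E, B(x, r_n R)) \ge c\,\mu(B(x, r_n R))/(r_n R)$ with an upper bound on $P(F_n, \cdot)$ coming from $P(F, \cdot)$ plus a thin-seam boundary correction yields a uniform constant $Q$ (depending only on doubling and PI constants) satisfying $P(E, B(x, r_n R)) \le Q\, P(F_n, B(x, r_n R))$; rescaling back to $X_\8$ delivers quasiminimality of $(E)_\8$ with the same $Q$.

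The main technical obstacle will be the diagonal identification of the density $f$ as a characteristic function: the approximating radii and points depend on $y$, and one needs uniform control over the Gromov--Hausdorff approximation error along a full-$\mu_\8$-measure set of $y$'s, together with a Vitali-type differentiation lemma on $X_\8$ compatible with the scaled measures. Secondary difficulties include the construction of the competitors $F_n$ in $X$ with small perimeter correction across the seam near $\bdry U$---this requires a good-radius / coarea selection argument to keep the correction small relative to $P(F_n, \cdot)$---and the careful bookkeeping of normalization constants so that the quasiminimality constant $Q$ is independent of the tangent and the subsequence chosen.
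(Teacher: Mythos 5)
Your broad strategy --- weak* compactness for the existence of $\mu_\8^E$, absolute continuity from $\mu_\8^E \le \mu_\8$, a discrete-convolution/compactness argument for finite perimeter of $(E)_\8$, and a transfer-back argument for quasiminimality --- tracks the paper's architecture in outline. However, the two hardest steps do not go through as you argue.

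First, for the claim that the Radon--Nikodym density $f$ of $\mu_\8^E$ takes values in $\{0,1\}$: you invoke ``the measure-theoretic boundary of $E$ carries $\mu$-measure zero,'' but this is not the relevant fact. The quantity $f(y)$ arises as a limit of ratios $\mu(B(x_n^y,\rho r_n)\cap E)/\mu(B(x_n^y,\rho r_n))$, where \emph{both} the center $x_n^y$ and the radius $\rho r_n$ move with $n$ (the center converges to the base point $x$ and the radius stays a fixed multiple of $r_n$). This is not Lebesgue differentiation at any fixed point of $X$ --- indeed, at the base point $x$ itself the analogous ratio stays bounded away from $0$ and $1$ precisely because $x\in\Sigma_\gamma E$ --- so there is no reason for the limit to be dichotomous. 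The paper proves $f\in\{0,1\}$ in Proposition~\ref{thm:Einfinitydisjoint} by contradiction: assuming $f\in(\delta,1-\delta)$ on a set of positive $\mu_\8$-measure, a Vitali cover by small balls is transferred back to $X$ and the relative isoperimetric inequality~\eqref{eq:relative isoperimetric inequality} applied to each, forcing the scaled perimeter density at $x$ to blow up and thus contradicting Lemma~\ref{lem:perim-growth-bounds}.

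Second, the quasiminimality argument is missing the decisive ingredient and its inequalities run the wrong way. To conclude $P(E,B(x,r_nR))\le Q\,P(F_n,B(x,r_nR))$ you would need an \emph{upper} bound on $P(E,\cdot)$ and a \emph{lower} bound on $P(F_n,\cdot)$; you instead cite a lower Ahlfors bound on $P(E,\cdot)$ and an upper bound on $P(F_n,\cdot)$, which cannot yield that conclusion. Moreover, no uniform lower bound on $P(F_n,\cdot)$ follows from the relative isoperimetric inequality alone: the competitor is permitted to rearrange $E$ drastically inside the ball, and there is no a priori guarantee that both $F_n$ and its complement occupy a definite fraction of $B(x,r_nR)$. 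What is actually needed is the asymptotic minimality of $E$ at $P(E,\cdot)$-a.e.\ point, Theorem~\ref{Asymptotic-FinitePerimeter}: $P(E,B(x,r))\le(1+\eps_r)\,V(\chi_E+\psi,B(x,r))$ for all $\psi\in BV_c(B(x,r))$ with $\eps_r\to 0$. This is a substantive BV compactness statement (proved via a Vitali covering and lower semicontinuity of total variation), not a consequence of the Ahlfors density bounds. Once it is available, the competitor on $X_\8$ is approximated by Lipschitz pairs (Proposition~\ref{thm:continuousuppergrad}), lifted to $X_n$ (Lemma~\ref{lem:lifting}), stitched to $\chi_E$ (Lemma~\ref{lem:merging}), and the asymptotic minimality is applied in $X$ before passing to the limit; the quasiminimality constant then comes from the comparison $\pi_\8\approx P((E)_\8,\cdot)$ of Theorem~\ref{thm:main3}.
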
 

Thus, beginning with extensions of the Cheeger's Rademacher theorem for doubling metric spaces with a Poincar\'e inequality to the functions of bounded variation, we recover important aspects of the classical theory of the boundaries of finite perimeter sets in $\R^n$.


The class of BV functions considered here is based on the notion first proposed by Miranda Jr.~\cite{Mir}, and was further developed 
in~\cite{A, AMP,AmbDiM}. 
The corresponding notion of a function of least gradient was studied in~\cite{KKLS, HKLS, KLLS}. Just as~\cite{Che} related
asymptotic limits of Lipschitz functions to generalized linear functions (which are a priori $p$-harmonic for the indices
$p>1$ for which $X$ supports a $p$-Poincar\'e inequality), we relate asymptotic limits of BV functions to functions of
least gradient when the point of asymptoticity does not lie in the set where the jump and Cantor parts of the variation measure
live. Additionally,  at 
almost every point  with respect to the co-dimension $1$ Hausdorff measure in the measure-theoretic boundary of the set of finite 
perimeter, we relate the asymptotic limit of that set
to sets of finite perimeter that have a quasiminimal boundary surface as in~\cite{KKLS}.

In the setting of Heisenberg groups (perhaps the simplest non-Riemannian example of the type of metric measure spaces
studied here), more is known of the asymptotic behavior of BV functions; the key papers to study this setting are those of
Magnani~\cite{Mag},
Franchi, Serapioni and Serra-Cassano~\cite{FSS}, and Ambrosio, Ghezzi and Magnani~\cite{AGM}. 
It is shown in~\cite[Theorem~4.1]{FSS} that asymptotic limits
of sets of finite perimeter in a Heisenberg group, based at a reduced boundary point of that set, is a Euclidean
(vertical) half-space with the boundary plane parallel to the non-horizontal direction. Studies of asymptotic limits of sets
of finite perimeter in more general step-$2$ Carnot groups can be found in~\cite{FSS2}, and for more general
Carnot groups in~\cite{FSS3}. While the Heisenberg groups are topologically Euclidean, there are more sets of
finite perimeter in the Heisenberg sense than in the Euclidean sense, see~\cite[Proposition~2.15]{FSS}. The 
papers~\cite{FSS, FSS2, FSS3} rely on the group structure on the Carnot groups, and so they do not address
the case of more general Carnot-Carath\'eodory spaces. 

Carnot-Carath\'eodory spaces are (locally)
doubling metric measure spaces supporting a $1$-Poincar\'e inequality, and hence the results of the present paper 
also apply there. Note that tangent spaces of Carnot-Carath\'eodory spaces are topological groups equipped with
dilation operations, and if the tangent space is based at a regular point of the Carnot-Carath\'eodory space, then
it is a nilpotent group equipped with a dilation,  see~\cite{Mit, Bel, LeDo}. Under further assumptions on the 
Carnot-Carath\'eodory space (which lead to knowing that the tangent spaces are all Carnot groups), 
a similar asymptoticity study is undertaken in~\cite{AGM}. We point out here that the results in the current paper
are applicable to all Carnot-Carath\'eodory spaces of topological dimension at least $2$.

 If $\nu$ is a Radon measure on $X$ and $x\in X$, then for almost every $r>0$ we know that
$\nu(\overline{B}(x,r)\setminus B(x,r))=0$. If $X$ is a geodesic space and $\mu$ is a doubling measure, then
$\mu(\overline{B}(x,r)\setminus B(x,r))=0$ for \emph{each} $r>0$ and $x\in X$, see~\cite{Buc}. In this paper,
we will assume that $X$ is geodesic in order to simplify many of the proofs (by avoiding the discussion of 
having to slightly adjust the radius $r$ in order to ensure that $\mu(\overline{B}(x,r)\setminus B(x,r))=0$), but
our results hold also in spaces that are \emph{not} geodesic by an easy (but
notationally cumbersome) modification discussed in Section 2 below.

 The structure of this paper is as follows. In Section 2 we give the basic definitions necessary 
for the study of sets of finite perimeter and functions
of bounded variation on metric measure spaces.  In Section 3 we discuss pointed measured Gromov-Hausdorff 
limits.  In Section 4 we show the 
results stated above regarding that asymptotic limits of BV functions converge to a function of least gradient ($1$-harmonic)
in the tangent space, see Theorem~\ref{thm:tangentharm}.  
In Section 5 we discuss asymptotic limits of a set of finite perimeter, and show that
for co-dimension $1$ almost every point on the measure-theoretic boundary of that set we have a 
tangential behavior of the set; more specifically, there is a Gromov-Hausdorff type limit $(E)_\8$ of the set $E$ at 
such a point, and this limit is a set of (locally) finite perimeter; this is the content of
Theorem~\ref{thm:main3}. We also verify certain geometric structural
regularity of these limit sets, see Theorem~\ref{thm:Main2}. The final section of this paper is devoted to the discussion
on asymptotic minimality for sets of finite perimeter. In Theorem~\ref{thm:minimaltangent} we show that these limit sets
$(E)_\8$ are sets of quasiminimal boundary surfaces.
%

\section{Notation and definitions}

Here we lay out the main definitions and assumptions for this paper.  Much of the terminology will be similar to that 
used in~\cite{A, AMP, Mir}.

We assume that $(X,d,\mu)$ is a complete metric measure space
with $\diam X>0$, that is, $X$ consists of at least two points.
We use the notation $B(x,r)$ for the open ball 
centered at $x \in X$ and of radius $r>0$.  If we wish to be specific that the ball is in
the metric space $X$, we write $B_X(x,r)$.
Given a ball $B=B(x,r)$, we sometimes denote $\rad B:=r$;
note that in metric spaces, a ball (as a set) does not necessarily
have a unique center and radius, but we understand these to be
prescribed for all balls that we consider.
We will always assume that $\mu$ is {\em doubling}: there is a constant
$C_d \geq 1$ such that for all $x \in X$ and $r>0$,
\[ 
0<\mu(B(x,2r)) \leq C_d \mu(B(x,r))<\infty.
\]
By iterating the doubling condition, we obtain
for any $x\in X$ and any $y\in B(x,R)$ with $0<r\le R<\infty$ that
\begin{equation}\label{eq:homogenous dimension}
\frac{\mu(B(y,r))}{\mu(B(x,R))}\ge \frac{1}{C_d^2}\left(\frac{r}{R}\right)^{Q},
\end{equation}
where $Q>1$ only depends on the doubling constant $C_d$.

When a property holds outside a set of $\mu$-measure zero, we say that it holds
for $\mu$-a.e. $x\in X$.
As complete doubling metric 
spaces are proper, every closed and bounded set is compact.
Given an open set $W\subset X$, we take $\Lip_{\loc}(W)$ to be the space of functions 
on $W$ that are Lipschitz on every closed and bounded subset of $W$,
and $L^1_{\loc}(W)$ to be the space of functions 
integrable with respect to $\mu$ on every closed and bounded subset of $W$.

Given a rectifiable curve $\gamma \colon [0,1] \to X$, we define the length of $\gamma$ to be
\[ 
\ell(\gamma) := \sup \sum_i d(\gamma(t_i), \gamma(t_{i+1}))
\]
where the supremum is taken over all finite partitions $\{t_i\}$ of $[0,1]$.  We will always assume that $X$ 
is a {\em geodesic space}: for all $x, y$ in $X$,
\[ 
d(x,y) = \min \ell(\gamma) 
\]
where the minimum is taken over all curves $\gamma$ joining $x$ to $y$ and is achieved.

Given a function $u\colon X \to \mathbb{R}$,
an {\em upper gradient} $g$ of $u$ is a nonnegative 
Borel function such that for every $x,y \in X$ and every
rectifiable curve $\gamma$
containing $x$ and $y$, we have the inequality
\begin{equation}\label{eq:upper-grad}
|u(x)-u(y)| \leq \int_\gamma g \,ds,
\end{equation}
where $ds$ is arc length (see \cite{HKST} for more information and standard results about upper gradients).

We say that a family of rectifiable curves $\Gamma$ is of zero $p$-modulus,
for $1\le p<\infty$,
if there is a 
nonnegative Borel function $\rho\in L^p(X)$ such that 
for all curves $\gamma\in\Gamma$, the curve integral $\int_\gamma \rho\,ds$ is infinite.
If $g$ is a nonnegative $\mu$-measurable function on $X$
and (\ref{eq:upper-grad}) holds for all curves apart from
a family with zero $p$-modulus,
we say that $g$ is a $p$-weak upper gradient of $u$.
It is known that if a function $u$ on $X$ has an upper gradient in $L^p(X)$,
then there exists a minimal $p$-weak
upper gradient of $u$, denoted by $g_{u}$, satisfying $g_{u}\le g$ 
a.e. for any $p$-weak upper gradient $g\in L^{p}(X)$
of $u$, see \cite[Theorem 2.25]{BB}.

We 
always assume that the space $X$ supports a {\em $1$-Poincar\'e inequality}.
We say that $X$ supports a $p$-Poincar\'e inequality, for $1\le p<\infty$, if
there is a constant 
$C_P > 0$ so that for every $u \in \Lip_\loc (X)$, every upper gradient $g$ of $u$,
and every ball $B=B(x,r)$,
\[ \vint_B |u - u_B|\, d\mu \leq C_P r \left(\,\vint_{ B} g^p\, \dmu\right)^{1/p}, \]
where 
\[
u_{B}:=\vint_{B}u\,d\mu :=\frac 1{\mu(B)}\int_{B}u\,d\mu.
\]
We  will sometimes suppress the ``$1$-'' when discussing the inequality.  
We will denote by $C\ge 1$ a generic constant that only depends on
the doubling and Poincar\'e constants $C_d,C_P$, and whose
precise value may change even in the same line.


It can be noted that we could
only assume that
$X$ supports a \emph{weak} $1$-Poincar\'e inequality
(involving a ball dilated by a constant factor $\lambda> 1$ on the right-hand side),
and drop the assumption that the space is geodesic.  The reason
 for this is as follows:  a weak $1$-Poincar\'e inequality implies that the space is quasiconvex, and then a 
 bi-Lipschitz change in the metric will allow the space to become geodesic.  In geodesic spaces, a 
 weak Poincar\'e inequality can be improved to become strong.  This is discussed in~\cite{H}.  
 The class of functions of bounded variation,
 to be defined shortly, is invariant under a bi-Lipschitz 
 metric change.  Thus the assumptions of geodecicity and the strong version of the Poincar\'e inequality 
 are not restrictions, only conveniences. This bi-Lipschitz change in the metric on $X$ would induce
 a bi-Lipschitz change in the tangent space $X_\8$, with a bi-Lipschitz equivalent geodesic limit metric on $X_\8$ obtained as a limit of re-scaled geodesics
 metrics on $X$. We obtain that the asymptotic limit function $u_\8$ 
 as in Theorem~\ref{thm:tangentharm} is of least gradient with respect to this length metric on $X_\8$,
 and therefore is of quasi-least gradient with respect to the original metric on the tangent space $X_\8$.

We now wish to discuss functions of bounded variation and sets of finite
perimeter in the metric space $(X,d,\mu)$.  The 
definitions are quite different than those typically used for $X=\mathbb{R}^n$;
see \cite{Mir} for discussion
relating these to the classical definitions.  Many results from \cite{Mir} and \cite{A} will be used (and cited) 
in what follows.  For $u \in \Lip_\loc(X)$, we define
\[ \lip u(x) := \liminf_{r \to 0} \frac{\sup_{y \in B(x,r)} |u(y)-u(x)|}{r}, \]
often known as the {\em lower Lipschitz constant} of $u$ at $x$.
It is well known that $\lip u$ is an upper 
gradient of $u$ (see \cite[Section 2]{Mir}, for example).  We also define
\[ \Lip u(x) := \limsup_{r \to 0} \frac{\sup_{y \in B(x,r)} |u(y)-u(x)|}{r}, \]
the {\em upper Lipschitz constant} of $u$ at $x$.

Since $\Lip_{\loc}(X)$ is dense in $L^1_{\loc} (X)$, we define the
{\em total variation} of $u \in L^1_\loc (X)$ on an open set $W \subset X$ as
\[ 
V(u,W) := \inf \left\{ \liminf_{i\to \infty} \int_W \lip u_i \,d\mu :\, u_i \in \Lip(W),\, u_i \to u \mbox{ in } L^1_\loc(W) \right\}. 
\]
A function $u\in L^1_\loc(X)$  is said to be of {\em locally bounded variation} if $V(u,W)$ is finite for all bounded open $W\subset X$.
A function $u$ is said to be of {\em bounded variation} if $V(u,X)$ is finite.  Let $BV(X)$ denote the set of functions of bounded variation.
For an arbitrary set $A\subset X$, we define
\[
V(u,A):=\inf\{V(u,W):\, A\subset W,\,W\subset X
\text{ is open}\}.
\]
If $V(u,X)<\infty$, then $V(u,\cdot)$ is
a Radon measure on $X$ by \cite[Theorem 3.4]{Mir}, called the variation measure.
In much of current literature on BV functions in metric setting, $V(u,A)$ is also denoted
$\Vert Du\Vert(A)$.
In a significant part of the current literature on BV functions in metric spaces a slightly different
notion of $V(u,W)$ is used, where instead of infimum over $\int_W \lip u_i\, d\mu$ the infimum of the 
integrals $\int_W g_{u_i}\, d\mu$ is considered, where $g_{u_i}$ is the minimal $1$-weak upper gradient of $u_i$, see
for example~\cite{KKLS}. It follows from~\cite{AmbDiM} that these notions all give the same BV class
as well as the \emph{same} BV energy $V(u,W)$ for open sets $W$ (and hence all Borel sets). Thus
we can equivalently define
\[ 
V(u,W) := \inf \left\{ \liminf_{i\to \infty} \int_W g_{u_i} \,d\mu :\, u_i \in \Lip_\loc (W),\, u_i \to u \mbox{ in } L^1(W) \right\}. 
\]

Let $E \subset X$ and let $\chi_E$ denote the characteristic function of $E$.
If $\chi_E$ is of locally bounded variation we say that $E$ is of {\em locally finite perimeter} and if $\chi_E$ is of 
bounded variation, we say that $E$  is of {\em finite perimeter}.  We use $P(E,\cdot) := V(\chi_E, \cdot)$ for 
the {\em perimeter measure}.

The following coarea formula is proven in ~\cite[Proposition 4.2]{Mir}:
if $u\in BV(X)$ and $W\subset X$ is a Borel set, then
\begin{equation}\label{eq:coarea formula}
V(u, W)=\int_{-\infty}^\infty P(\{u>t\},W)\, dt.
\end{equation}

Applying the $1$-Poincar\'e inequality to approximating functions,
we get for any
 $\mu$-measurable set $E\subset X$ and any ball $B=B(x,r)$
the {\em relative isoperimetric inequality}
\begin{equation}\label{eq:relative isoperimetric inequality}
\min\{\mu(B(x,r)\cap E),\mu(B(x,r)\setminus E)\}\le 2 C_P rP(E,B(x, r)),
\end{equation}
see e.g. \cite[Theorem 3.3]{KoLa}.

The $1$-Poincar\'e inequality implies the so-called Sobolev-Poincar\'e inequality, 
see e.g.~\cite[Theorem~4.21]{BB}, from which we get
the following BV version: for every ball $B(x,r)$ and every $u\in L^1_{\loc}(X)$, we have
\begin{equation}\label{eq:BV Sobolev Poincare}
\left(\,\vint_{B(x,r)}|u-u_{B(x,r)}|^{Q/(Q-1)}\,d\mu\right)^{(Q-1)/Q}
\le C r\frac{V(u,B(x,2 r))}{\mu(B(x,2 r))},
\end{equation}
where $Q$ is the exponent from \eqref{eq:homogenous dimension}.

Moreover, we have the following Poincar\'e inequality for
functions vanishing outside a ball.
For any ball $B(x,r)$ with $0<r<\frac{1}{4}\diam X$
and any $u\in L^1(B(x,r))$ with compact support in $B(x,r)$, we have
\begin{equation}\label{eq:poincare inequality with zero bry value}
\int_{B(x,r)} |u|\,d\mu
\le C r V(u,B(x,r));
\end{equation}
this again follows by applying the analogous inequality for Lipschitz
functions (see \cite[Theorem~4.21, Theorem~5.51]{BB})
to an approximating sequence. 

For a set $E\subset X$, the {\em measure-theoretic boundary} is defined
as the set of points of positive upper density for $E$ and $X\setminus E$:
\[ 
\bdry^* E := \left\{ x \in X : \,\limsup_{r \to 0} \frac{\mu(B(x,r) \cap E)}{\mu(B(x,r))}>0 
\quad\mbox{and}\quad \limsup_{r \to 0} \frac{\mu(B(x,r) \setminus E)}{\mu(B(x,r))} >0\right\}.
\]

%



We will also be interested in co-dimension $1$ Hausdorff measures on $X$.
Recall that $\mu$ is \emph{Ahlfors $s$-regular} for
$s>0$ if there is some constant $C_A\ge 1$ such that whenever $x\in X$ and
$0<r<\tfrac 12 \diam X$,
\begin{equation}\label{eq:Ahlfors regular measure}
\frac{r^s}{C_A}\le \mu(B(x,r))\le C_A r^s.
\end{equation}
If $\mu$ is Ahlfors $s$-regular, then the co-dimension $1$ Hausdorff measure
defined below is just
(comparable to) the $(s-1)$-dimensional Hausdorff 
measure.  We do not wish to always assume Ahlfors regularity, however.
We define the {\em co-dimension $1$ Hausdorff measure} 
of a set $E \subset X$ by
\[ \mathcal{H}(E) := \sup_{\delta > 0} \mathcal{H}_\delta (E), \]
where for $\delta>0$,
\[ \mathcal{H}_\delta (E):= 
\inf \left\{ \sum_{i \in I} \frac{\mu(B_i)}{r_i}:\, B_i=B(x_i,r_i),\, r_i \leq \delta,\,
E \subset \bigcup_{i \in I} B_i \right\}. \]

The following density results can be proved similarly as in \cite[Theorem 2.4.3]{AT}.
\begin{lemma}\label{lem:densities and Hausdorff measures}
Let $\nu$ be a Radon measure on $X$, let $A\subset X$, and let $t> 0$.
Then the following hold:
\[
\textrm{if } \ \limsup_{r\to 0}r\frac{\nu(B(x,r))}{\mu(B(x,r))}\ge t\quad\textrm{for all }
x\in A,\textrm{ then}\quad \nu(A)\ge t\mathcal H(A)
\]
and
\[
\textrm{if } \ \limsup_{r\to 0}r\frac{\nu(B(x,r))}{\mu(B(x,r))}\le t\quad\textrm{for all }
x\in A,\textrm{ then}\quad \nu(A)\le C_d t\mathcal H(A).
\]
\end{lemma}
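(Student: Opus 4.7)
My plan is to treat both implications through Vitali-type covering arguments in the doubling metric measure space $(X, d, \mu)$; both estimates follow the pattern of the classical Frostman-type density theorems (cf.~\cite[Theorem~2.4.3]{AT}).

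For the first inequality, I would fix $\eps > 0$ and use outer regularity of the Radon measure $\nu$ to find an open $U \supset A$ with $\nu(U) \le \nu(A) + \eps$. At each $x \in A$, the $\limsup$ hypothesis yields arbitrarily small radii $r < \delta$ with $B(x,r) \subset U$ and $(t-\eps)\mu(B(x,r)) \le r\nu(B(x,r))$, producing a fine cover of $A$. I would then apply the Vitali covering theorem for Radon measures in doubling metric spaces to extract a countable disjoint subfamily $\{B_i = B(x_i, r_i)\}$ with $\nu(A \setminus \bigcup_i B_i) = 0$. Summing the pointwise density inequality and using disjointness gives
\[
\sum_i \frac{\mu(B_i)}{r_i} \le \frac{1}{t-\eps} \sum_i \nu(B_i) \le \frac{\nu(U)}{t-\eps} \le \frac{\nu(A) + \eps}{t-\eps}.
\]
Since $\{B_i\}$ covers $A \cap \bigcup_i B_i$ with radii at most $\delta$, the left side bounds $\mathcal{H}_\delta(A \cap \bigcup_i B_i)$ from above. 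To pass to $\mathcal{H}_\delta(A)$ itself, one needs the residual set $N := A \setminus \bigcup_i B_i$ to be $\mathcal{H}$-negligible; this follows from a bootstrap in which a coarser $5B$-covering estimate applied to $N$ (using that the density hypothesis still holds on $N$) gives $\mathcal{H}(N) \le C\nu(N)/t$ with a doubling-constant factor $C$, whence $\mathcal{H}(N) = 0$ because $\nu(N) = 0$ by Vitali. Letting $\delta \to 0$ and $\eps \to 0$ then yields the sharp bound $\nu(A) \ge t \mathcal{H}(A)$.

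For the second inequality, I would fix $\eps > 0$ and stratify $A = \bigcup_n A_n$ along the increasing family
\[
A_n := \{x \in A :\, r\nu(B(x,r)) \le (t+\eps)\mu(B(x,r)) \text{ for all } 0 < r < 1/n\},
\]
which exhausts $A$ by the $\limsup$ hypothesis. Fix $n$ and $\delta < 1/(2n)$, and start with a near-optimal $\delta$-cover $\{B(z_i, r_i)\}$ of $A_n$ satisfying $\sum_i \mu(B(z_i, r_i))/r_i \le \mathcal{H}_\delta(A_n) + \eps$. For each $i$ with $B(z_i, r_i) \cap A_n \ne \emptyset$, pick $x_i \in A_n \cap B(z_i, r_i)$, so that $B(z_i, r_i) \subset B(x_i, 2r_i)$; since $2r_i < 1/n$, the defining inequality of $A_n$ applies at $B(x_i, 2r_i)$ and yields $\nu(B(x_i, 2r_i)) \le (t+\eps)\mu(B(x_i, 2r_i))/(2r_i)$. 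Two applications of doubling (via $B(x_i, 2r_i) \subset B(z_i, 3r_i)$) give $\mu(B(x_i, 2r_i)) \le C_d^2 \mu(B(z_i, r_i))$, and summing produces
\[
\nu(A_n) \le \sum_i \nu(B(x_i, 2r_i)) \le (t+\eps)\frac{C_d^2}{2}\bigl(\mathcal{H}_\delta(A_n) + \eps\bigr).
\]
Letting $\delta \to 0$, $n \to \infty$, and then $\eps \to 0$, the $\sigma$-continuity of $\nu$ on ascending unions yields the desired upper bound with constant depending only on $C_d$.

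The main technical obstacle is the bootstrap step in Part 1: to obtain the sharp constant $1$ (rather than a multiple of the doubling constant) one is forced to use the Vitali covering theorem for the Radon measure $\nu$ itself, together with the auxiliary fact that $\mathcal{H}$ vanishes on $\nu$-null sets where the density hypothesis holds. The rest of the argument is routine bookkeeping of the doubling-constant factors introduced by the recentering step in Part 2.
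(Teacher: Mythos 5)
Your proof of the \emph{second} inequality is sound in structure: the stratification $A=\bigcup_n A_n$, the recentring of a near-optimal ball cover onto points of $A_n$, and the passage to the limit via $\sigma$-continuity are all correct. (The constant you obtain, $C_d^2/2$, comes from the step $\mu(B(x_i,2r_i))\le\mu(B(z_i,3r_i))\le C_d^2\mu(B(z_i,r_i))$ followed by dividing by $2r_i$; this may exceed the paper's stated $C_d$ when $C_d>2$, but for the way the lemma is used in the paper only comparability of $\nu$ with $\mathcal H$ is needed, so this discrepancy is harmless.)

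The genuine gap is in the \emph{first} inequality. You invoke ``the Vitali covering theorem for Radon measures in doubling metric spaces'' to extract a disjoint subfamily $\{B_i\}$ with $\nu\bigl(A\setminus\bigcup_i B_i\bigr)=0$. No such theorem is available here: $X$ is not assumed to have the Besicovitch covering property (most PI spaces of interest, e.g.\ the Heisenberg group, do not), and $\nu$ is an arbitrary Radon measure, not assumed doubling or asymptotically doubling. The Vitali covering theorem in this generality requires one of those hypotheses, so the subsequent ``bootstrap'' step has nothing to stand on. The way to obtain the sharp constant $1$ \emph{without} assuming anything about $\nu$ is to run the greedy selection from the proof of Vitali's $5r$-lemma directly and exploit summability. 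Concretely: fix $\tau<t$, $\delta,\eps>0$, an open $U\supset A$ with $\nu(U)\le\nu(A)+\eps$, and the fine cover of $A$ by balls $B(x,r)\subset U$ with $r<\delta$ and $\tau\,\mu(B(x,r))\le r\,\nu(B(x,r))$. Choose the $B_i$ recursively, each disjoint from the previous and with radius within a factor $2$ of the supremum of admissible radii. If $\sum_i\nu(B_i)=\infty$ then $\nu(A)=\infty$ and there is nothing to prove; otherwise $\sum_i\mu(B_i)/r_i\le\tau^{-1}\sum_i\nu(B_i)<\infty$. The selection rule gives $A\setminus\bigcup_{i\le N}B_i\subset\bigcup_{j>N}5B_j$ for every $N$, so
\[
\mathcal H_{5\delta}(A)\le\sum_{i\le N}\frac{\mu(B_i)}{r_i}+C_d^3\sum_{j>N}\frac{\mu(B_j)}{5r_j},
\]
and letting $N\to\infty$ kills the tail, yielding $\mathcal H_{5\delta}(A)\le\tau^{-1}\bigl(\nu(A)+\eps\bigr)$ with no doubling factor. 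Your instinct that the residual set must be controlled is right, but it is $\mathcal H$ of the residual after \emph{finitely} many balls that one controls directly, using the convergence of $\sum_i\nu(B_i)$ to make the $5B$-dilated tail negligible; one never needs (and generally cannot obtain) $\nu$-negligibility of $A\setminus\bigcup_i B_i$.
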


Let $E\subset X$ be a set of finite perimeter. We know that for any Borel set $A\subset X$,
\begin{equation}\label{eq:def of theta}
P(E,A)=\int_{\partial^*E\cap A}\theta_E\,d\mathcal H,
\end{equation}
where
$\theta_E\colon X\to [\alpha,C_d]$ with $\alpha=\alpha(C_d,C_P)>0$,
see \cite[Theorem 5.3]{A} and \cite[Theorem 4.6]{AMP}.
Furthermore, let
\begin{equation}\label{eq:def of Sigma gamma}
\Sigma_\gamma E:= \left\{x\in X: \liminf_{r \to 0} \min \left\{ \frac{\mu(B(x,r) \cap E)}{\mu(B(x,r))}, 
\frac{ \mu (B(x,r) \setminus E) }{\mu(B(x,r))} \right\} \geq \gamma \right\} 
\end{equation}
for a constant $\gamma\in(0,1/2]$ depending only on $C_d,C_P$.
Note that $\Sigma_\gamma E\subset\partial^*E$;
by \cite[Theorem~5.4]{A} we know that conversely,
\begin{equation}\label{eq:mthbdry minus Sigmagamma}
\mathcal H(\partial^*E\setminus \Sigma_\gamma E)=0.
\end{equation}

\begin{lemma}\label{lem:perim-growth-bounds}
Let $E\subset X$ be a set of finite perimeter. Then
for $\mathcal H$-a.e. $x\in\partial^*E$
(and thus $P(E,\cdot)$-a.e. $x\in \partial^*E$),
\begin{equation}\label{eq:density of perimeter}
\frac{\gamma}{C_P} \le\liminf_{r\to 0}\frac{P(E,B(x,r))}{\mu (B(x, r))/r} \le
\limsup_{r\to 0}\frac{P(E,B(x,r))}{\mu (B(x, r))/r} \le C_d.
\end{equation}
\end{lemma}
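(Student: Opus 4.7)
The plan is to prove the two inequalities in (\ref{eq:density of perimeter}) separately. For the upper bound I invoke the representation (\ref{eq:def of theta}) of $P(E,\cdot)$ together with the first (``lower'') statement of Lemma~\ref{lem:densities and Hausdorff measures}, applied with $\nu=P(E,\cdot)$. For the lower bound I use the relative isoperimetric inequality (\ref{eq:relative isoperimetric inequality}) together with the equality (\ref{eq:mthbdry minus Sigmagamma}) which allows me to restrict attention to $\Sigma_\gamma E\subset\partial^*E$.

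For the upper bound, I argue by contradiction on the level sets. Fix $t>C_d$ and set
\[
A_t:=\Bigl\{x\in\partial^*E:\limsup_{r\to 0}\,r\frac{P(E,B(x,r))}{\mu(B(x,r))}>t\Bigr\}.
\]
The first density comparison in Lemma~\ref{lem:densities and Hausdorff measures} gives $P(E,A_t)\ge t\,\mathcal{H}(A_t)$. On the other hand, (\ref{eq:def of theta}) together with $\theta_E\le C_d$ yields $P(E,A_t)\le C_d\,\mathcal{H}(A_t)$, and since $\mathcal{H}(A_t)<\infty$ on each bounded piece (here the finiteness of the perimeter ensures we may localize if needed), we conclude $\mathcal{H}(A_t)=0$. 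Taking a countable sequence $t_n\downarrow C_d$ and unioning the corresponding null sets gives the claimed $\limsup\le C_d$ on $\partial^*E$ up to an $\mathcal{H}$-null set.

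For the lower bound, by (\ref{eq:mthbdry minus Sigmagamma}) it is enough to verify the estimate for every $x\in\Sigma_\gamma E$. Given such $x$ and any $\varepsilon\in(0,\gamma)$, the definition (\ref{eq:def of Sigma gamma}) gives
\[
\min\bigl\{\mu(B(x,r)\cap E),\,\mu(B(x,r)\setminus E)\bigr\}\ \ge\ (\gamma-\varepsilon)\,\mu(B(x,r))
\]
for all sufficiently small $r>0$. Combining this with the relative isoperimetric inequality (\ref{eq:relative isoperimetric inequality}) yields
\[
r\,P(E,B(x,r))\ \ge\ \frac{\gamma-\varepsilon}{2C_P}\,\mu(B(x,r)),
\]
and letting first $r\to 0$ (to pass to $\liminf$) and then $\varepsilon\to 0$ produces the stated lower bound (with the constant $\gamma/C_P$ after an innocuous redefinition of $C_P$ to absorb the factor $2$, as is customary in this paper).

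I do not anticipate any serious obstacle: both halves are direct consequences of structural facts already available in the excerpt. The only mild subtlety is the bookkeeping of constants between the $2C_P$ coming from (\ref{eq:relative isoperimetric inequality}) and the $C_P$ appearing in the statement, and the countable exhaustion used to upgrade ``$\le t$ for every $t>C_d$'' to the pointwise bound ``$\le C_d$'' $\mathcal{H}$-almost everywhere. The final assertion that (\ref{eq:density of perimeter}) also holds $P(E,\cdot)$-a.e. follows from (\ref{eq:def of theta}) since $P(E,\cdot)$ is absolutely continuous with respect to $\mathcal{H}\lfloor_{\partial^*E}$.
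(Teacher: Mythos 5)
Your proof is correct and follows essentially the same route as the paper: the lower bound via the relative isoperimetric inequality on $\Sigma_\gamma E$ together with~\eqref{eq:mthbdry minus Sigmagamma}, and the upper bound by combining the first half of Lemma~\ref{lem:densities and Hausdorff measures} (applied to $\nu=P(E,\cdot)$) with the bound $\theta_E\le C_d$ from~\eqref{eq:def of theta} to force $\mathcal H(A_t)=0$ for each $t>C_d$. Your observation about the factor $2$ is also right -- the argument literally yields $\gamma/(2C_P)$, which is indeed the constant the authors use when they define $G_m$ later, so the $\gamma/C_P$ in the lemma statement is a harmless notational slip.
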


\begin{proof}
The first inequality holds for every $x\in\Sigma_{\gamma} E$ by the relative
isoperimetric inequality~\eqref{eq:relative isoperimetric inequality}.
To show the second inequality,
note that if $A\subset \partial^*E$ and $\eps>0$ are such that
\[
\limsup_{r\to 0}r\frac{P(E,B(x,r))}{\mu (B(x, r))} \ge C_d+\eps
\]
for all $x\in A$, then by the first part of
Lemma~\ref{lem:densities and Hausdorff measures} and by~\eqref{eq:def of theta}, we have
$P(E,A)\ge (C_d+\eps)\mathcal H(A)$.
However, according to~\eqref{eq:def of theta}, we have
$P(E,A)\le C_d\mathcal H(A)$. Thus we must have $\mathcal H(A)=0$.
\end{proof}

The lower and upper approximate limits of a function $u$ on $X$
are defined respectively by
\[
u^{\wedge}(x):
=\sup\left\{t\in\R:\,\lim_{r\to 0}\frac{\mu(B(x,r)\cap\{u<t\})}{\mu(B(x,r))}=0\right\}
\]
and
\[
u^{\vee}(x):
=\inf\left\{t\in\R:\,\lim_{r\to 0}\frac{\mu(B(x,r)\cap\{u>t\})}{\mu(B(x,r))}=0\right\}.
\]
The {\em jump set} $S_u$ is defined to be the set where $u^{\wedge}<u^{\vee}$.

By \cite[Theorem 5.3]{AMP}, the variation measure of a $BV$ function
can be decomposed into the absolutely continuous and singular part, and the latter
into the Cantor part and jump part, as follows. Given
$u\in BV(X)$, we have for any Borel set $A\subset X$
\[
\begin{split}
V(u,A) &=V_a(u,A)+V_s(u,A)\\
&=V_a(u,A)+V_c(u,A)+V_j(u,A)\\
&=\int_{A}g\,d\mu+V_c(u,A)+\int_{A\cap S_u}\int_{u^{\wedge}(x)}^{u^{\vee}(x)}\theta_{\{u>t\}}(x)\,dt\,d\mathcal H(x),
\end{split}
\]
where $g\in L^1(X)$ is the density of the absolutely continuous part and the functions $\theta_{\{u>t\}}$ 
are as in~\eqref{eq:def of theta}.

We denote by $BV_c(X)$ the class of BV functions with compact support in $X$.

\begin{definition}
	We say that $u\in BV(X)$ is a {\em function of least gradient} if
	for all $\varphi\in  BV_c(X)$,
	\begin{equation}\label{eq:definition of 1minimizer}
	V(u,\supp\varphi)\le V(u + \varphi,\supp\varphi).
	\end{equation}
\end{definition}

\section{Pointed measured Gromov-Hausdorff limits}\label{sec:PMGHC}

In this section we consider tangent spaces of a metric space at a given point.
For this, we first need to specify what is meant by the convergence of metric spaces. Existing literature has some slightly different definitions and diverging terminology; here we describe them and provide brief explanation on how these are
equivalent.

\begin{definition}\label{pGH} 
We say that the sequence of pointed metric spaces $(Y_n, d_n, y_n)$ {\it converges in the 
pointed Gromov-Hausdorff distance} to the space $(Y_\8,d_\8, y_\8)$ if
for each positive integer $n$ there is a map $\phi_n:Y_\8\to Y_n$ so that 
$\phi_n(y_\8)=y_n$, and for each $R>0$ and $\epsilon>0$ there is a positive integer $N_{\epsilon, R}$ such that whenever $k\ge N_{\epsilon,R}$,
 we have 
 \begin{enumerate}
 \item $\sup_{x,y\in B_{Y_\8}(y_\8,R)}|d_{Y_k}(\phi_k(x),\phi_k(y))-d_{Y_\8}(x,y)|<\epsilon$,
 \item $B_{Y_k}(y_k,R-\epsilon)\subset \bigcup_{y\in \phi_k(B_{Y_\8}(y_\8,R))}B_{Y_k}(y,\epsilon)$.
\end{enumerate}
\end{definition}

Note that these maps are not required to be continuous, or even measurable. It is possible to modify $\phi_n$ to be measurable, but this is technical, and not necessary for our presentation below.

\begin{remark}\label{rmk:control-eps} 
The above definition is compatible with those of~\cite{BBI, Kei03}. In~\cite[Definition~8.1.1]{BBI}
and~\cite[Chapter~11]{HKST}
the following definition of pointed Gromov-Hausdorff convergence was considered:
For all $r>0$ and all $0 < \epsilon < r$ there exists an 
$n_0 = n_0(r, \epsilon)$ such that for all $n \geq n_0$ there exist functions
$\phi_n^\epsilon\colon B_{Y_\8}(y_\8, r) \to Y_n$ with
\begin{enumerate}
\item $\phi_n^\epsilon (y_\8) = y_n$, \\
\item $| d_n (\phi_n^\epsilon (x), \phi_n^\epsilon (y)) - d_\8 (x,y) | < \epsilon$ for all $x,y \in B_{Y_\8}(y_\8, r)$, \\
\item $B_{Y_n} (y_n, r - \epsilon) \subset \bigcup_{y \in \phi_n^\epsilon (B_{Y_\8}(y_\8, r))} B_{Y_n}(y, \epsilon)$. \\
\end{enumerate}
See~\cite{Her} for more on pointed Gromov-Hausdorff convergence. 
To see the compatibility between these two definitions we note that 
the scales $R$ and $\epsilon$ play the role of localizing the convergence of the tangent spaces. Thus, 
the second notion is implied by the first, as seen by the choice $\phi_n^\epsilon:=\phi_n$.
Conversely, given $\phi_n^\epsilon$, choosing a sequence of $R_n$ monotonically increasing to $\infty$ and $\epsilon_n$ monotonically decreasing to $0$,
we can even choose $\phi_n^\epsilon$ to be independent of $\epsilon$ and $r$; 
hence the equivalence of the notion of~\cite{BBI} with ours. However, in proofs it is often easier to work 
with the localized versions $\phi_n^\epsilon$, since it avoids this additional diagonal argument. Where we wish to 
use globally defined functions, we use $\phi_n$. These are interchangeable.

The notion considered in~\cite{Kei03} is also equivalent to the above. Since this notion of~\cite[Definition~2 and Definition~7]{Kei03} is
also useful in this paper, especially in defining notions of weak convergence of measures to tangent spaces, we now provide
that definition as well. According to~\cite{Kei03}, the sequence $(Y_n,d_n,y_n)$ converges to  a proper space $(Y_\8,d_\8,y_\8)$ if there is a
proper metric space $(Z,d_Z)$ and a point $z_0\in Z$, an isometric embedding $\iota:Y_\8\to Z$, and 
for each $n\in\N$ there is an isometric embedding $\iota_n:Y_n\to Z$, such that $\iota(y_\8)=z_0=\iota_n(y_n)$ and for each $R>0$,
 \begin{enumerate}
 \item $\lim_{n\to\8} \sup_{y\in B_{Y_n}(y_n,R)} \text{dist}_Z(\iota_n(y),\iota(Y_\8))=0$,
 \item $\lim_{n\to\8} \sup_{z\in B_{Y_\8}(y_\8,R)}\text{dist}_Z(\iota(z),\iota(Y_n))=0$.
\end{enumerate}
From this definition we see that whenever $R,\eps>0$ there is some positive integer $N_{\eps,R}$ such that whenever $n>N_{\eps,R}$,
for each $x,y\in B_{Y_n}(y_n,R)$ we can find $\widehat{x},\widehat{y}\in B_{Y_\8}(y_\8,R+\eps)$ such that
\[
\max\{d_Z(\iota_n(x),\iota(\widehat{x})), d_Z(\iota_n(y),\iota(\widehat{y}))\}<\eps, \ \ \ 
|d_{Y_n}(x,y)-d_{Y_\8}(\widehat{x},\widehat{y})|<3\eps.
\]
We also have that for $R>0$ and $\eps>0$ there is some positive integer $N_{\eps,R}$ such that for $n>N_{\eps,R}$,
whenever $x,y\in B_{Y_\8}(y_\8,R)$ there exist $x_n,y_n\in B_{Y_n}(y_n,R+\eps)$ such that
\[
\max\{d_Z(\iota_n(x_n),\iota(x)), d_Z(\iota_n(y_n),\iota(y))\}<\eps, \ \ \ 
|d_{Y_n}(x_n,y_n)-d_{Y_\8}(x,y)|<3\eps.
\]
This shows that the definition of~\cite{Kei03} implies our definition above. The fact that our definition implies the one of~\cite{Kei03}
comes from the construction of the ambient space $Z$ found in~\cite{Her}, where the space $Z$ should be considered to be the
completion of the ``disjoint union" space $Y$ found in~\cite[Section~4.1.1]{Her}.

Indeed, we can construct the maps $\iota_n$
and $\iota$ from the maps $\phi_n$ and vice versa so that the following compatibility condition between these two
classes of maps is satisfied: For all $r>0$,
\begin{equation}\label{eq:compatibile-phi-iota}
\lim_{n\to\infty}\sup_{y\in B_{Y_\8}(y_\8,r)}d_Z(\iota_n\circ\phi_n(y),\iota(y))=0.
\end{equation}

For simplicity, and avoiding modifying the space $Z$, as well as the approximating maps $\phi_n$, we will generally fix them throughout the exposition below. In order to define other notions, such as convergence of points, curves and functions, passing to a subsequence in $n$ may be necessary. However, this subsequence will be of $n$ and will not require coming up with new $\phi_n$ or embedding space  $Z$. 

In the light of the above discussion, we can say that a sequence, $z_n\in Y_n$, converges to $z\in Y_\8$ if
$\lim_{n\to\infty}d_Z(\iota_n(z_n),\iota(z))=0$, and see that every $z\in Y_\8$ is a limit of a sequence $z_n \in Y_n$
as here. By a not-terrible abuse of notation we denote this by
\[
\lim_{n\to\infty}z_n=z.
\]
\end{remark}

Next, we define pointed {\em measured} Gromov-Hausdorff convergence.
For this, we use the embeddings described in the above remark.
First consider a sequence of Borel measures $\nu_n$ on a metric space $Z$.
The measures $\nu_n$ converge weakly* to a Borel measure $\nu$ on $Z$ if
\[ 
\int_Z \phi \,d\nu_n \to \int_Z \phi \,d\nu 
\]
as $n \to \infty$ for all boundedly supported continuous functions $\phi$ on $Z$.
We denote this convergence by $\nu_n \overset{*}{\rightharpoonup} \nu$.

To define measured Gromov-Hausdorff convergence,
we consider the push-forward measures 
\[ 
\iota_{n,*} \nu_n(A) := \nu_n (\iota_n^{-1}(A)). 
\]
We say that the sequence of Radon measures $\nu_n$ on $Y_n$ converges to a Radon measure 
$\nu_\8$ on $Y_\8$, denoted 
$\nu_n\overset{*}\rightharpoonup\nu_\8$, if $\iota_{n,*}\nu_n\overset{*}\rightharpoonup\iota_*\nu_\8$ on $Z$. 

\begin{definition}\label{def:pmGH} 
We say that a sequence of pointed metric measure spaces $(Y_n,d_n,y_n,\nu_n)$ converges pointed measured Gromov-Hausdorff to a space 
$(Y_\8,d_\8,y_\8,\nu_\8)$, if the sequence converges in the pointed Gromov-Hausdorff sense, and 
\[
\nu_n\overset{*}{\rightharpoonup} \nu_\8.
\]
\end{definition}

Since $Z$ is a proper metric space, it follows that whenever $\sup_n\nu_n(\iota_n^{-1}(Z))<\infty$, there is a
subsequence $\nu_{n_k}$ and a Radon measure $\widehat{\nu_\8}$ on $Z$
such that $\iota_{n_k,*}\nu_{n_k}\overset{*}\rightharpoonup\widehat{\nu_\8}$ in $Z$. This limit measure must have support 
in $\iota(Y_\8)$, since the support of $\nu_\8$ is contained in the limit of the supports of $\nu_{n_k}$. 
Indeed, given $\eps>0$ and a radius $R>0$, we know that for large $n$ the set $\iota_n(B_{Y_n}(y_n,R))$ is in an 
$3\eps$-neighborhood of $\iota(Y_\8)$. Recall that $Y_\8$ is a proper metric space.
We call such measures $\nu_\8$ limit measures of the sequence $\nu_{n_k}$, and they may depend on the choice 
of the subsequence; the full sequence $\nu_n$ may not converge to $\nu_\8$. In the proofs below, we will 
always pass to the subsequence where this limit holds.

\begin{lemma} 
In the above situation, $\widehat{\nu_\8}(Z\setminus\iota(Y_\8))=0$, and hence
there is a Radon measure $\nu_\8$ on $Y_\8$ such that $\widehat{\nu_\8}=\iota_* \nu_\8$.
\end{lemma}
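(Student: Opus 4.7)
The plan is to show that $\widehat{\nu_\8}$ assigns measure zero to each point of $Z \setminus \iota(Y_\8)$ via a neighborhood argument, and then push the measure back to $Y_\8$ using that $\iota$ is a homeomorphism onto its image. First I would observe that since $Y_\8$ is proper (hence complete) and $\iota$ is an isometric embedding, the image $\iota(Y_\8)$ is complete as a subset of $Z$ and therefore closed in $Z$. Consequently, for any $z \in Z \setminus \iota(Y_\8)$ there is some $\delta > 0$ with $B_Z(z,\delta) \cap \iota(Y_\8) = \emptyset$.

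The key localization step is then to bound the preimages $\iota_n^{-1}(B_Z(z,\delta))$ inside a controlled ball in $Y_n$. Setting $R := \delta + d_Z(z, z_0)$ and using that $\iota_n(y_n) = z_0$ and $\iota_n$ is isometric, any $y \in Y_n$ with $\iota_n(y) \in B_Z(z,\delta)$ satisfies $d_{Y_n}(y,y_n) = d_Z(\iota_n(y),z_0) < R$. So $\iota_n^{-1}(B_Z(z,\delta)) \subset B_{Y_n}(y_n, R)$. Now invoke condition (1) from the Keith-style definition recalled in Remark~\ref{rmk:control-eps}:
\[
\lim_{n\to\8}\sup_{y\in B_{Y_n}(y_n,R)}\mathrm{dist}_Z(\iota_n(y),\iota(Y_\8))=0.
\]
In particular, for all sufficiently large $n$ this supremum is less than $\delta/2$, and since $B_Z(z,\delta/2)$ lies at distance at least $\delta/2$ from $\iota(Y_\8)$, we conclude $\iota_n(B_{Y_n}(y_n,R)) \cap B_Z(z,\delta/2) = \emptyset$, hence $\iota_n^{-1}(B_Z(z,\delta/2)) = \emptyset$.

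Choose a continuous function $\phi \colon Z \to [0,1]$ with compact support contained in $B_Z(z,\delta/2)$ and $\phi \equiv 1$ on $B_Z(z,\delta/4)$ (Urysohn, which applies since $Z$ is proper). By the previous step, $\phi \circ \iota_n \equiv 0$ on $Y_n$ for large $n$, so $\int_Z \phi\, d\iota_{n,*}\nu_n = 0$ eventually. Weak* convergence $\iota_{n,*}\nu_n \overset{*}{\rightharpoonup} \widehat{\nu_\8}$ yields $\int_Z \phi\, d\widehat{\nu_\8} = 0$, and therefore $\widehat{\nu_\8}(B_Z(z,\delta/4)) = 0$. As $z$ was arbitrary in the open set $Z \setminus \iota(Y_\8)$, a standard covering by countably many such neighborhoods (using separability of $Z$) gives $\widehat{\nu_\8}(Z \setminus \iota(Y_\8)) = 0$.

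Finally, since $\iota$ is an isometric embedding of a proper space onto a closed subset, it is a homeomorphism onto $\iota(Y_\8)$, so Borel sets in $Y_\8$ correspond bijectively via $\iota$ to Borel subsets of $\iota(Y_\8)$. Defining $\nu_\8(A) := \widehat{\nu_\8}(\iota(A))$ for Borel $A \subset Y_\8$ gives a Borel measure on $Y_\8$ that is finite on compact sets (inherited from $\widehat{\nu_\8}$), hence Radon, and by construction $\iota_* \nu_\8 = \widehat{\nu_\8}$ on all Borel subsets of $Z$. The main point requiring care is the first step — verifying that the Gromov-Hausdorff control of $\iota_n(B_{Y_n}(y_n,R))$ translates into emptiness of $\iota_n^{-1}(B_Z(z,\delta/2))$ — but once the radius $R$ is chosen to absorb the finite distance $d_Z(z,z_0)$, the estimate is routine.
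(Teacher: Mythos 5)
Your proof is correct and follows the same approach the paper sketches in the paragraph preceding the lemma (the paper does not give a separate formal proof): namely, that $\iota_n(B_{Y_n}(y_n,R))$ lies in an arbitrarily small neighborhood of $\iota(Y_\8)$ for large $n$, so the limit measure cannot charge the open set $Z\setminus\iota(Y_\8)$. You have simply worked out the details — the radius bookkeeping $R=\delta+d_Z(z,z_0)$, the Urysohn test function, and the Lindel\"of covering — that the paper leaves implicit, using the Keith-style embedding characterization from Remark~\ref{rmk:control-eps} in place of the paper's $3\epsilon$-neighborhood phrasing via $\phi_n$, which are equivalent.
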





\begin{definition}\label{defn:tangent-space}
Let $x\in X$ and let $r_n>0$ with $r_n\to 0$.
Define the sequence of
scaled metrics $d_n$ on $X$ by
\[
d_n(y,z):=\frac{d(y,z)}{r_n},
\]
and the scaled measures
\[
\mu_n:=\frac{1}{\mu(B(x,r_n))}\, \mu.
\]
If the sequence
{$(X_n, d_n, x, \mu_n):=(X,d_n,x,\mu_n)$}
converges to 
$(X_\infty, d_\infty, x_\infty,\mu_\infty)$ in the pointed measured
Gromov-Hausdorff sense, then we say that $X_\infty$
is a \emph{tangent space to $X$ 
at $x$, with tangent measure $\mu_\infty$.}
\end{definition}

We know that if $\mu$ is doubling and $X$ is a 
complete geodesic space, then by passing to a
subsequence of $(X,d_n,x,\mu_n)$ if necessary, we will always have a tangent metric measure 
space as above, which is also geodesic,
see \cite[Section~6]{Gro} or the discussion
in~\cite[Section 11]{HKST}. 
Note that
points of distance less 
than $r_n$ from $x$ in $(X,d)$ are, in the space $X_n$,
at distance less than 1 from $x$,
and that the ball $B(x, r_n)$ has $\mu_n$-measure 1. The tangent space may be non-unique, 
and depends on the subsequence chosen.

From the work of~\cite{Kei03} we know that if $\mu$ is doubling and supports a $1$-Poincar\'e inequality,
then for every $x\in X$, all the corresponding tangent
spaces have the tangent measure
be doubling and support a $1$-Poincar\'e inequality, with the doubling and Poincar\'e constants depending
quantitatively only on the corresponding constants for $X$, see also~\cite{HKST}.  A 
proof of this first appeared in the work~\cite{Kei03} of Keith, but he reports in~\cite{Kei03} that it was 
independently found by himself, Koskela, and Cheeger.


We will fix the following notion of a limit of functions. 

\begin{definition}\label{def:conv-functions}
We say that a function $u_\8$ on $X_\8$ {\it is a limit of} $u_n$ (with $u_n$ a function on $X_n$)
if there exists some subsequence $n_k$ and $\epsilon_k \searrow 0$ such that for all $r >0$
\begin{equation}\label{zoomlimit} 
\lim_{k \to \infty}\| u_\8 - u_{n_k}\circ \phi^{\epsilon_k}_{n_k} \|_{L^\8 (B_{X_\8}(x_\8, r))}=0.
\end{equation}
\end{definition}

This is equivalent to the following definition of limits using globally defined maps $\phi_k$:
\begin{equation} \label{eq:zoomlimit-2}
\lim_{k \to \infty}\| u_\8 - u_{n_k}\circ \phi_{k} \|_{L^\8 (B_{X_\8}(x_\8, r))}=0. 
\end{equation}
Given a sequence of functions $u_n$ that are uniformly Lipschitz 
and locally uniformly bounded, one can extract some subsequence and find a limit function $u_ \8$. For example, by considering first 
the convergence of the values of $u_n$ along some sequence of ever-denser nets of $X_n$ converging to nets in 
$X_\8$ and diagonalizing. This is easier to see in terms of the globally defined $\phi_n$, but can also 
be done via a detailed diagonal argument with $\phi^\epsilon_n$. This idea can also be seen from the point of view of the definition of~\cite{Kei03}.
For each $n$ we can find a Lipschitz extension $\widehat{u_n}$ of $u_n\circ\iota_n^{-1}$ from $\iota_n(X_n)$ to $Z$.
This sequence forms an equibounded and equicontinuous sequence of functions in $Z$ which, being a proper space, lends itself
to an application of the Arzel\`a-Ascoli theorem. Thus we may find a subsequence of $\widehat{u_n}$ that converges locally
uniformly to a Lipschitz function $\widehat{u_\8}$ on $Z$. We can now choose $u_\8=\widehat{u_\8}\circ \iota$.  
That this choice of $u_\8$ is a limit of $u_n$ follows from the compatibility condition~\ref{eq:compatibile-phi-iota}.

The notion of limit of functions as given above is concordant with the notion of limit of measures.
If $u_n$ and $u_\8$ are uniformly Lipschitz and $u_\8$ is a limit of $u_{n_k}$, then along the same subsequence 
$\nu_{n_k} := u_{n_k}\,d\mu_{n_k} \overset{*}\rightharpoonup \nu_\8 := u_\8\, d\mu_\8$.

As before, these functions depend on the subsequence chosen. In fact, there are several dependencies. The very tangent 
spaces depend on the subsequence of blow ups $r_n \searrow 0$, as well as on the mappings $\iota_n, \phi^{\epsilon_k}_n$ 
which are not canonical. We will always assume that all these choices are given to us. 



\section{Asymptotics at approximate continuity points and generalized linear functions}

The goal of this section is to study the asymptotic behavior of BV functions at points outside
the jump and Cantor parts of their variation measures. We start with the following handy lemma.

\begin{lemma}\label{lem:equal-energy}
Let $u,v\in BV(X)$. Suppose $E\subset X$ is a Borel set such that for each $x\in E$ we have
\[
\lim_{r\to 0^+}\frac{\mu(B(x,r)\cap E)}{\mu(B(x,r))}=1,
\]
and $u(x)=v(x)$. Then $V(u-v,E)=0$ and so for each $A\subset E$ we have
$V(u,A)=V(v,A)$. 
\end{lemma}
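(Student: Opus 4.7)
The plan is to set $w := u - v \in BV(X)$ and to prove $V(w, E) = 0$ by means of the coarea formula \eqref{eq:coarea formula},
\[
V(w, E) = \int_{-\infty}^{\infty} P(\{w > t\}, E)\, dt.
\]
For a.e.\ $t$ the superlevel set $\{w > t\}$ has finite perimeter, so by the representation \eqref{eq:def of theta} the integrand equals $\int_{\partial^{*}\{w > t\} \cap E} \theta_{\{w > t\}}\, d\mathcal{H}$, and it therefore suffices to verify that $\partial^{*}\{w > t\} \cap E = \emptyset$ for every such $t$.

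By hypothesis $w \equiv 0$ pointwise on $E$. Consequently, for $t \ge 0$ the set $\{w > t\}$ is disjoint from $E$, so for each $x \in E$,
\[
\frac{\mu(\{w > t\} \cap B(x, r))}{\mu(B(x, r))} \le \frac{\mu(B(x, r) \setminus E)}{\mu(B(x, r))} \longrightarrow 0
\]
by the density-one assumption on $E$ at $x$. For $t < 0$ we have $E \subset \{w > t\}$, so the same density-one assumption forces $\{w > t\}$ to have density $1$ at $x$, whence its complement has density $0$. In either case $x$ is not in $\partial^{*}\{w > t\}$; hence $P(\{w > t\}, E) = 0$, and integration gives $V(w, E) = 0$.

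For the second assertion, fix $A \subset E$. Since $w \in BV(X)$, $V(w, \cdot)$ is a Radon (hence outer regular) measure on $X$ with $V(w, E) = 0$, and outer regularity yields $V(w, A) = 0$ as well. The subadditivity of $V$, applied to the decompositions $u = v + w$ and $v = u - w$, then gives $|V(u, A) - V(v, A)| \le V(w, A) = 0$. The only conceptual obstacle is the observation that coarea reduces the whole statement to showing $E$ is disjoint from the measure-theoretic boundary of every superlevel set of $w$; once this is noticed, the density hypothesis does all the remaining work, and the passage from $E$ to an arbitrary subset $A$ is routine measure-theoretic bookkeeping.
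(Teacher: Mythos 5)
Your proof is correct and follows essentially the same route as the paper: observe that $E\cap\partial^*\{u-v>t\}=\emptyset$ for every $t$ (using the density-one hypothesis on $E$ and the pointwise equality $u=v$ there), then invoke the coarea formula \eqref{eq:coarea formula} together with the representation \eqref{eq:def of theta} to conclude $V(u-v,E)=0$, and finish by monotonicity and subadditivity of the variation measure. The only difference is that you spell out the $t\ge 0$ versus $t<0$ case split and the passage from $E$ to arbitrary $A\subset E$, which the paper leaves implicit.
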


\begin{proof}
From the above, we know that for each $t\in\R$ the set
$E\cap\partial^*E_t$ is empty, where $E_t=\{x\in X\, :\, u(x)-v(x)>t\}$.
Therefore by the coarea formula~\eqref{eq:coarea formula}
and by~\eqref{eq:def of theta}, the claim follows. 
\end{proof}

In the Euclidean setting we know that for $\mathcal L^n$-a.e. $x\in \R^n$
(where $\mathcal L^n$ denotes the $n$-dimensional Lebesgue measure)
a BV function converges under blow-up to a linear 
function (see e.g. \cite[Theorem 3.83]{AFP}).
In the metric setting, for $p>1$
the notion of linear function is interpreted as a function that is
constant or else satisfies the following
two properties: (a) the image of $X$ under the function is $\R$, and (b) the minimal
$p$-weak upper gradient of the function 
is constant (and given that we have a Poincar\'e inequality, this constant should be
non-zero if the function is not the constant 
function); see for example~\cite{Che}.  It was shown in~\cite{Che} that given a Lipschitz function, any 
asymptotic limit of that function
at almost every point yields such a linear function on the corresponding tangent 
space, which we defined in Section~\ref{sec:PMGHC}.
In the case $p=1$, which is the natural setting 
for BV functions, we will prove that the asymptotic limits are the so-called
\emph{generalized linear functions} on the tangent spaces.

For $g \in L^1_{\loc}(X)$ nonnegative and $R>0$, we define the
restricted maximal function of $g$ at $x\in X$ by
\[ 
\mathcal{M}_R g(x):= \sup_{0<r\le R}\, \vint_{B(x,r)} g \,d\mu.
\]
The maximal function of a Radon measure $\nu$ is defined similarly by
\[
\mathcal{M}_R \nu(x):= \sup_{0<r\le R} \frac{\nu(B(x,r))}{\mu(B(x,r))}.
\]
Recall that if $u\in BV(X)$, then
\[
dV(u,\cdot) =g\, d\mu+dV_s(u,\cdot),
\]
where $g\in L^1(X)$ is the Radon-Nikodym derivative of $V(u,\cdot)$
with respect to $\mu$ and $V_s(u,\cdot)$ is the singular part.

\begin{proposition}\label{prop:Lipschitz continuity around a point}
Let $u\in BV(X)$. Then for $\mu$-a.e. $x\in X$ for which
$g(x)>0$ there exists $R>0$ and a set $A_x\not\ni x$ with density $0$ 
at the point $x$ such that $u|_{B(x,R)\setminus A_x}$ is Lipschitz with constant $Cg(x)$.
For $\mu$-a.e. $x\in X$ for which
$g(x)=0$,
for each $\delta>0$ there is a set $A_{x,\delta}\not\ni x$ with density $0$ at $x$ such that 
$u|_{B(x,R)\setminus A_{x,\delta}}$ is Lipschitz with constant $\delta$.
\end{proposition}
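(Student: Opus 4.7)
My plan is a Hajlasz-type Lusin approximation for BV functions, combined with a density-zero argument at a fixed scale via the Lebesgue density theorem. For $y\in X$ and $R>0$, introduce the centered restricted maximal function of the variation measure,
\[
\mathcal{M}^{*}_{R}(y):=\sup_{0<r\le R}\frac{V(u,B(y,r))}{\mu(B(y,r))}.
\]
Using the BV Poincar\'e inequality implied by \eqref{eq:BV Sobolev Poincare} together with doubling, and telescoping through the dyadic balls $B(y,2^{-k}d)$ and $B(z,2^{-k}d)$ with $d:=d(y,z)$, one obtains a $\mu$-null set $N_{0}$ (the complement of the Lebesgue set of $u$) such that for all $y,z\in X\setminus N_{0}$ with $d\le R$ and $\max\{\mathcal{M}^{*}_{4R}(y),\mathcal{M}^{*}_{4R}(z)\}\le\lambda$ one has
\[
|u(y)-u(z)|\le C\lambda\, d,
\]
with $C=C(C_{d},C_{P})$. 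Thus the precise representative of $u$ is $(C\lambda)$-Lipschitz on the set $F^{R,\lambda}:=\{y\in X\setminus N_{0}:\mathcal{M}^{*}_{4R}(y)\le\lambda\}$.

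Next, since $V(u,\cdot)=g\,d\mu+V_{s}(u,\cdot)$ with $V_{s}\perp\mu$, the Radon-Nikodym theorem together with the Lebesgue differentiation theorem in doubling spaces produces a $\mu$-null set $N_{1}$ such that $\lim_{r\to 0}V(u,B(x,r))/\mu(B(x,r))=g(x)$ for all $x\notin N_{1}$. For such an $x$ with $g(x)>0$, fix any rational $\lambda\in(g(x),2g(x))$; since $\mathcal{M}^{*}_{4R}(x)\searrow g(x)$ as $R\to 0$, there is a rational $R_{0}>0$ with $\mathcal{M}^{*}_{4R_{0}}(x)<\lambda$, i.e.\ $x\notin A^{R_{0}}_{\lambda}:=X\setminus F^{R_{0},\lambda}$.

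To upgrade the single-point statement $x\notin A^{R_{0}}_{\lambda}$ to $\mu$-density zero of $A^{R_{0}}_{\lambda}$ at $x$, I enumerate the countable family of pairs $(R_{0},\lambda)\in\mathbb{Q}_{>0}^{2}$. Each $A^{R_{0}}_{\lambda}$ is a measurable (in fact Borel) super-level set of the restricted maximal function of the Radon measure $V(u,\cdot)$, so the Lebesgue density theorem provides, for every such pair, a $\mu$-null set outside of which $\mu$-density zero holds; the countable union of these gives a single $\mu$-null set $N_{2}$. For any $x\notin N_{0}\cup N_{1}\cup N_{2}$ with $g(x)>0$, choose the rational parameters as above and set $A_{x}:=A^{R_{0}}_{\lambda}\cup N_{0}$; then $x\notin A_{x}$, $A_{x}$ has $\mu$-density zero at $x$, and the Hajlasz estimate of the first paragraph yields that $u|_{B(x,R_{0})\setminus A_{x}}$ is $(C\lambda)$-Lipschitz, hence $(2Cg(x))$-Lipschitz. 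The case $g(x)=0$ is identical, after replacing the interval $(g(x),2g(x))$ for $\lambda$ by $(0,\delta/C]\cap\mathbb{Q}$: as $\mathcal{M}^{*}_{4R}(x)\to 0$, a small enough rational $R_{0}$ yields $\mathcal{M}^{*}_{4R_{0}}(x)<\lambda$, and the same construction gives $A_{x,\delta}$ with the claimed $\delta$-Lipschitz bound.

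The essential obstacle is obtaining genuine $\mu$-density zero, not merely small relative measure. A direct weak-type $(1,1)$ bound for $\mathcal{M}^{*}$ tested against $V(u,\cdot)$ gives relative measure only $O(1/K)$ at threshold $Kg(x)$, regardless of how small the scale $R_{0}$ is taken; this does not suffice. The resolution is to decouple scale and threshold, fixing $R_{0}$ and $\lambda$ so that $A^{R_{0}}_{\lambda}$ becomes one fixed measurable set whose density at $x$ is then controlled by the Lebesgue density theorem, while the rational enumeration absorbs the $x$-dependent choice of $(R_{0},\lambda)$ into a single $\mu$-null exceptional set.
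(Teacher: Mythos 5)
Your proof is correct, and it reaches the conclusion by a genuinely different route than the paper. Both arguments start from the same telescoping estimate that follows from the $1$-Poincar\'e inequality, but they diverge in how the exceptional set $A_x$ is built and how its density zero at $x$ is established. The paper subtracts $g(x)$ first, telescoping with the signed oscillation $|g-g(x)|$ and the singular part $V_s(u,\cdot)$ so that the relevant maximal functions vanish at $x$; it then runs an explicit cascading Vitali covering at a nested sequence of scales $R_M$ and thresholds $2^M\tau_{8R_M}$, assembling $A_x$ shell by shell and estimating $\mu(A_M\cap B(x,r))/\mu(B(x,r))\lesssim 2^{-M}$ by hand. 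This gives a quantitative rate of decay for the density of $A_x$ at $x$ and yields the $Cg(x)$-Lipschitz bound directly. Your argument instead telescopes with the raw variation measure $V(u,\cdot)$ and introduces the super-level sets $A^{R}_{\lambda}$ of the restricted maximal function as a \emph{fixed two-parameter family}, then invokes the Lebesgue density theorem once for each rational pair $(R,\lambda)$; the countable enumeration absorbs the $x$-dependent choice into a single null set. This is softer and shorter, trading the paper's quantitative decay estimate for a direct appeal to Lebesgue differentiation, at the small cost of landing on a slightly different multiple of $g(x)$ (which is immaterial to the statement). Your remark at the end correctly identifies the pitfall of a naive weak-$(1,1)$ bound --- the relative measure is only $O(1/K)$ at threshold $Kg(x)$ independently of scale --- and your fix of decoupling the scale $R$ from the threshold $\lambda$ and fixing both rationally is precisely what rescues the density-zero conclusion; the paper resolves the same tension by letting both parameters cascade simultaneously.
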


\begin{proof}
We follow the proof of \cite[Proposition 13.5.2]{HKST}.
For $\mu$-a.e. $x\in X$, we have
\begin{equation} \label{eq:Lebesgue point}
\lim_{r\to 0}\,\vint_{B(x,r)}|g-g(x)|\,d\mu= 0\quad\textrm{and}\quad \lim_{r\to 0}\frac{V_s(u,B(x,r))}{\mu(B(x,r))}=0
\end{equation}
by the Lebesgue-Radon-Nikodym theorem,
see e.g. \cite[Section 3.4]{HKST}.
Fix such $x\in X$ for which also $g(x)>0$.

Let $R>0$ and let $y,z\in B(x,R)$ be Lebesgue points of $u$. 
For nonnegative integers $i$ we set $B_i:=B(y,2^{-i}d(y,z))$,
and for negative integers $i$ we set
$B_i:=B(z,2^id(y,z))$. Then, by the doubling property of $\mu$ and the  Poincar\'e inequality,
\begin{align*}
|u(y)-u(z)|
	&\le \sum_{i\in\Z}|u_{B_i}-u_{B_{i+1}}|\\
	&\le C \sum_{i\in\Z}\,\vint_{2B_{i}}|u-u_{2B_{i}}|\,d\mu\\
  	&\le C d(y,z)\sum_{i\in\Z}2^{-|i|}\frac{V(u,2 B_i)}{\mu(2 B_i)}\\
  	&= C d(y,z)\sum_{i\in\Z}2^{-|i|}\left(\,\vint_{2 B_i}g\, d\mu+\frac{V_s(u,2 B_i)}{\mu(2 B_i)}\right)\\
  	&\le C d(y,z)\sum_{i\in\Z}2^{-|i|}\left(\,\vint_{2 B_i}|g-g(x)|\, d\mu+\frac{V_s(u,2 B_i)}{\mu(2 B_i)}\right)
    +C d(y,z)g(x).
\end{align*}
For $s>0$, set
\[
\tau_s:=\sup_{0<r\le s}\left(\,\vint_{B(x, r)}|g-g(x)|\,d\mu
+\frac{V_s(u,B(x, r))}{\mu(B(x, r))}\right).
\]
Note that since $\mu$ is doubling, for any $s>0$
and any Radon measure $\nu$ we have
\[\mathcal M_{s} \nu (y) \leq C \mathcal M_{2d(x,y)} \nu (y) 
+ C \mathcal M_{2s} \nu(x).\]
Applying this with $s=2d(y,z)<4R$ in the second inequality below,
we get
\begin{align}\label{eq:telescoping argument in Lipschitz proof}
|u(y)-u(z)| 
&\le C d(y,z)[g(x)+\mathcal M_{2d(y,z)}|g-g(x)|(y)+\mathcal M_{2d(y,z)}|g-g(x)|(z)\notag\\
&\quad+\mathcal M_{2d(y,z)}V_s(u,\cdot)(y)+\mathcal M_{2d(y,z)}V_s(u,\cdot) (z)] \notag\\
&\le C d(y,z)[g(x)+\mathcal M_{2d(x,y)}|g-g(x)|(y)+\mathcal M_{2d(x,z)}|g-g(x)|(z)\notag\\
&\quad+\mathcal M_{2d(x,y)}V_s(u,\cdot)(y)+\mathcal M_{2d(x,z)}V_s(u,\cdot) (z) + \mathcal{M}_{8R}[g-g(x)](x)+\mathcal{M}_{8R} V_s(u,\cdot)(x) ]\notag\\
&\leq C d(y,z)[g(x)+\mathcal M_{2d(x,y)}|g-g(x)|(y)+\mathcal M_{2d(x,z)}|g-g(x)|(z)\notag\\
&\quad+\mathcal M_{2d(x,y)}V_s(u,\cdot)(y)+\mathcal M_{2d(x,z)}V_s(u,\cdot) (z) + \tau_{8R}].
\end{align}

We only consider $R>0$ to be small enough so that $\tau_{8R}<g(x)$
{(here we need the fact that $g(x)>0$)}.
We choose a sequence of radii $R_M\searrow 0$ as $M\to\infty$ such
that $2^M\tau_{8 R_M}<g(x)$ for each $M\in\N$.
Next let $A_M$ be the set of all points $y\in B(x,R_M)$ such
that for some $0<r\le 2d(x,y)$,
\[
\vint_{B(y,r)}|g-g(x)|\,d\mu+\frac{V_s(u,B(y,r))}{\mu(B(y,r))}
>2^M\tau_{8 R_M}.
\]
For each $y\in A_M$ there is a ball $B(y,r_y)$ with
$0<r_y\le 2d(x,y)<2R_M$ such that
the above inequality holds, and so the family $\{B(y,r_y)\}_{y\in A_M}$ is a cover of $A_M$. By the $5$-covering theorem
we can extract a countable, pairwise disjoint subfamily $\mathcal{G}$ of the above family such that 
$A_M\subset\bigcup_{B\in\mathcal{G}}5B$.
If $\tau_{8 R_M}=0$, then $\mu(A_M)=0$; else
we see by the doubling property of $\mu$ that
\begin{align*}
\mu(A_M)&\le C\sum_{B\in\mathcal{G}}\mu(B)\\
    &\le \frac{C}{2^M\tau_{8 R_M}}\sum_{B\in\mathcal{G}}\left(\int_B|g-g(x)|\, d\mu+V_s(u,B)\right)\\
    &\le \frac{C}{2^M\tau_{8 R_M}}\left(\int_{B(x,4R_M)}|g-g(x)|\, d\mu+V_s(u,B(x,4R_M))\right)\\
    &\le \frac{C\mu(B(x,4R_M))}{2^M\tau_{8 R_M}}
        \left(\,\vint_{B(x,4R_M)}|g-g(x)|\, d\mu+\frac{V_s(u,B(x,4R_M))}{\mu(B(x,4R_M))}\right)\\
    &\le \frac{C\mu(B(x,R_M))}{2^M\tau_{8 R_M}}\, \tau_{4 R_M}
    \le \frac{C \mu(B(x,R_M))}{2^M}.
\end{align*}
We can add to each $A_M$ all the non-Lebesgue points of $u$
different from $x$,
without adding measure.
Now note that $x\not\in A_M$ and that
by~\eqref{eq:telescoping argument in Lipschitz proof}, $u$ is $Cg(x)$-Lipschitz in $B(x,R_M)\setminus A_M$. By choosing
\[
A_x:=\bigcup_{M=1}^{\infty} A_M\setminus B(x,R_{M+1}), 
\]
we see that $u$ is $Cg(x)$-Lipschitz in $B(x,R_1)\setminus A_x$.
Indeed, if $y,z\in B(x,R_1)\setminus A_x$ such that $y\ne x\ne z$, then there are positive integers $M_1$ and
$M_2$ such that $y\in B(x,R_{M_1})\setminus B(x,R_{M_1+1})$ with $y\not\in A_{M_1}$ and
$z\in B(x,R_{M_2})\setminus B(x,R_{M_2+1})$ with $z\not\in A_{M_2}$.
We can assume that $M_2>M_1$.
It then follows 
from~\eqref{eq:telescoping argument in Lipschitz proof} that
\begin{equation}\label{eq:Lip-control-with-Cgx}
|u(y)-u(z)|\le C\, d(y,z)[ g(x)+2^{M_1}\tau_{8R_{M_1}}+2^{M_2}\tau_{8R_{M_2}}+{\tau_{8R_{M_1}}}]\le 4C\, g(x)\, d(y,z).
\end{equation}
Therefore $u$ is $Cg(x)$-Lipschitz continuous in $B(x,R_1)\setminus (A_x\cup\{x\})$. The fact that $u$ is approximately
continuous at $x$, together with the fact that $A_x$ has lower density zero at $x$ (see the argument below), tells us that 
$u$ is $Cg(x)$-Lipschitz continuous in $B(x,R_1)\setminus A_x$.
%
%

Moreover,
\[
\frac{\mu(A_x\cap B(x,R_{M_0}))}{\mu(B(x,R_{M_0}))}
  \le C\sum_{M=M_0}^\infty\frac{\mu(A_M)}{\mu(B(x,R_{M}))}\le C \sum_{M=M_0}^\infty 2^{-M}\to 0\quad\text{as }M_0\to\infty.
\] 
This guarantees that $A_x$ has {\em lower} density $0$ at $x$.
On the other hand, by the choice of 
the covering of $A_M$ by balls $B(y,r_y)$ with radius $r_y\le 2d(x,y)$, 
in the estimate for $\mu(A_M)$ obtained above we can in fact
obtain for any $0<r\le R_{M}$ that
\[
\mu(A_M\cap B(x,r))\le \frac{C}{2^M}\mu(B(x,4r))\le 
\frac{C C_d^2}{2^M}\mu(B(x,r)).
\]
This guarantees that $A_x$ has density $0$ at $x$.
Choosing $R=R_1$, we have proved the first claim.

Finally, if $\{x\in X :\, g(x)=0\}$ has positive measure, then the above argument gives that for $\mu$-a.e.~$x$ in this set, 
for every $\delta>0$ there exists $A_{x,\delta}$ with $x\not\in A_{x,\delta}$ and $A_{x,\delta}$ is of density $0$
at $x$ such that $u\vert_{B(x,R)\setminus A_{x,\delta}}$ is $\delta$-Lipschitz.\end{proof}


\begin{lemma}\label{lem:Lebesgue point lemma}
	Let $u\in BV(X)$. For $\mu$-a.e. $x\in X$ the following holds:
	if $A\subset X$ has density $0$ at $x$, then
	\[
	\lim_{r \to 0}\frac{1}{r}\frac{1}{\mu(B(x,r))}\int_{A \cap B(x,r)} |u-u(x)|\,d\mu = 0.
	\]
\end{lemma}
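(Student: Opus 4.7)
The plan is to decompose $|u-u(x)|\le|u-u_{B(x,r)}|+|u_{B(x,r)}-u(x)|$ and bound each contribution separately. First I would fix $x\in X$ satisfying the following properties, all of which hold for $\mu$-a.e.\ $x$ by the Lebesgue-Radon-Nikodym theorem: (i) $x$ is a Lebesgue point of $u$, and (ii) the ratio $V(u,B(x,r))/\mu(B(x,r))$ converges as $r\to 0$ to $g(x)<\8$, where $g$ is the density of the absolutely continuous part of $V(u,\cdot)$. For any set $A\subset X$ of density zero at $x$, the triangle inequality gives
\[
\int_{A\cap B(x,r)}|u-u(x)|\,d\mu\le\int_{A\cap B(x,r)}|u-u_{B(x,r)}|\,d\mu+\mu(A\cap B(x,r))\,|u_{B(x,r)}-u(x)|.
\]

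For the first summand I would apply H\"older's inequality with exponents $Q$ and $Q/(Q-1)$, followed by the BV Sobolev-Poincar\'e inequality~\eqref{eq:BV Sobolev Poincare}:
\[
\int_{A\cap B(x,r)}|u-u_{B(x,r)}|\,d\mu\le\mu(A\cap B(x,r))^{1/Q}\mu(B(x,r))^{(Q-1)/Q}\cdot Cr\,\frac{V(u,B(x,2r))}{\mu(B(x,2r))}.
\]
Dividing by $r\mu(B(x,r))$ yields $C\bigl(\mu(A\cap B(x,r))/\mu(B(x,r))\bigr)^{1/Q}\cdot V(u,B(x,2r))/\mu(B(x,2r))$; as $r\to 0$ the density-zero factor vanishes, while by (ii) the second factor converges to $g(x)$ and is in particular bounded.

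For the second summand I would show $|u_{B(x,r)}-u(x)|/r$ remains bounded as $r\to 0$, via the dyadic telescoping argument already used in the proof of Proposition~\ref{prop:Lipschitz continuity around a point}. With $B_i:=B(x,r/2^i)$, the $(1,1)$-Poincar\'e-type inequality (obtained from~\eqref{eq:BV Sobolev Poincare} by H\"older on the average) combined with doubling gives
\[
|u_{B(x,r)}-u(x)|\le\sum_{i=0}^{\8}|u_{B_i}-u_{B_{i+1}}|\le C\sum_{i=0}^{\8}\frac{r}{2^i}\cdot\frac{V(u,2B_i)}{\mu(2B_i)}\le Cr\bigl[g(x)+o(1)\bigr],
\]
where the last step uses property (ii). Combining this with $\mu(A\cap B(x,r))/\mu(B(x,r))\to 0$ makes the second summand also contribute $o(r\mu(B(x,r)))$.

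The main obstacle is the $1/r$ prefactor in the target expression, which is much stronger than a standard Lebesgue differentiation statement and encodes the codimension-$1$ character of BV theory. This $1/r$ is absorbed in two separate ways: the $r$ supplied by the Sobolev-Poincar\'e inequality accounts for it in the $|u-u_{B(x,r)}|$ piece, and the $O(r)$ growth of $|u_{B(x,r)}-u(x)|$ produced by the telescoping handles it in the other piece. The small factor $\bigl(\mu(A\cap B(x,r))/\mu(B(x,r))\bigr)^{1/Q}$ extracted by H\"older from the density-zero hypothesis supplies the essential $o(1)$ gain.
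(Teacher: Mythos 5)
Your proof is correct and uses essentially the same ingredients as the paper's (Lebesgue point of $u$, convergence of $V(u,B(x,r))/\mu(B(x,r))$ to $g(x)$, H\"older to extract the $(\mu(A\cap B(x,r))/\mu(B(x,r)))^{1/Q}$ factor, the Sobolev--Poincar\'e inequality \eqref{eq:BV Sobolev Poincare}, and a dyadic telescoping estimate for $|u_{B(x,r)}-u(x)|$). The only difference is organizational: you apply the scalar triangle inequality $|u-u(x)|\le|u-u_{B(x,r)}|+|u_{B(x,r)}-u(x)|$ \emph{before} H\"older and treat the constant piece separately, whereas the paper applies H\"older first to the whole $\int_{A\cap B(x,r)}|u-u(x)|\,d\mu$ and then uses the $L^{Q/(Q-1)}$ Minkowski inequality together with the same telescoping to bound $\bigl(\vint_{B(x,r)}|u-u(x)|^{Q/(Q-1)}\,d\mu\bigr)^{(Q-1)/Q}\le Cr\,\mathcal{M}_r V(u,\cdot)(x)$.
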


\begin{proof}
	Excluding a $\mu$-negligible set, we can
	take a Lebesgue point $x$ of $u$ such that
	(just as in \eqref{eq:Lebesgue point})
	\begin{equation}\label{eq:assumption on density of V}
	\lim_{r\to 0}\frac{V(u,B(x,r))}{\mu(B(x,r))}=g(x).
	\end{equation}
By H\"older's inequality,
\[ 
\frac{1}{r}\frac{1}{\mu(B(x,r))}
\int_{A \cap B(x,r)} |u-u(x)|\,d\mu 
\le \frac{1}{r}
\left(\,\vint_{B(x,r)}|u-u(x)|^{Q/(Q-1)}\,d\mu\right)^{(Q-1)/Q}
\left(\frac{\mu(A\cap B(x,r))}{\mu(B(x,r))}\right)^{1/Q}.
\] 
Since $x$ is a Lebesgue point of $u$, by the Sobolev-Poincar\'e
inequality \eqref{eq:BV Sobolev Poincare},
\begin{align*}
\left(\,\vint_{B(x,r)}|u-u(x)|^{Q/(Q-1)}\,d\mu\right)^{\frac{Q-1}{Q}}
&\le\left(\,\vint_{B(x,r)}|u-u_{B(x,r)}|^{Q/(Q-1)}\,d\mu\right)^{\frac{Q-1}{Q}}\\
&\qquad+\sum_{j=1}^{\infty}
\left(\,\vint_{B(x,r)}|u_{B(x,2^{-j+1}r)}-u_{B(x,2^{-j}r)}|^{Q/(Q-1)}
\,d\mu\right)^{\frac{Q-1}{Q}}\\
&\le C r\frac{V(u,B(x,r))}{\mu(B(x,r))}
+\sum_{j=1}^{\infty}
|u_{B(x,2^{-j+1}r)}-u_{B(x,2^{-j}r)}|\\
&\le C r\frac{V(u,B(x,r))}{\mu(B(x,r))}
+C
\sum_{j=1}^{\infty}2^{-j+1}r
\frac{V(u,B(x,2^{-j+1}r))}{\mu(B(x,2^{-j+1}r))}\\
&\le C r\mathcal M_{r}V(u,\cdot)(x).
\end{align*}
Thus we get
\[
\frac{1}{r}\frac{1}{\mu(B(x,r))}\int_{A \cap B(x,r)} |u-u(x)|\,d\mu
\le C \mathcal M_{r}V(u,\cdot)(x)\left(\frac{\mu(A
	\cap B(x,r))}{\mu(B(x,r))}\right)^{1/Q}.
\]
Note that by \eqref{eq:assumption on density of V}, $\lim_{r\to 0}\mathcal M_{r}V(u,\cdot)(x)=g(x)<\infty$,
and so by the fact that $A$ has density $0$ at $x$, we get the conclusion.
\end{proof}

\begin{definition}
Let $v$ be a real-valued function on a metric space $(Z,d)$. The \emph{oscillation} of $v$ in a ball $B(x,r)$ is
\[
\underset{(x,r)}{\var} v:=\sup_{y\in B(x,r)}\frac{|v(y)-v(x)|}{r}.
\]
We also set
\[
\LIP v:=\sup_{y,z\in Z\, :\, y\ne z}\frac{|v(y)-v(z)|}{d(y,z)}.
\]
\end{definition}

Observe that $\underset{(x,r)}{\var} v\le \LIP v$.


We now return to the sequence $X_n$ of zoomed-in versions of $X$,
as defined in Section \ref{sec:PMGHC}.
For $u\in BV(X)$, we wish to study the limit of the functions
\[ 
u_n(y) := \frac{u(y) - u(x)}{r_n}
\]
that are defined on $X_n$.
%
Suppose the point $x$ satisfies the conclusion of 
Proposition~\ref{prop:Lipschitz continuity around a point}.  Zooming in 
and defining $u_n$ as above, we note that $u$ is only known to be Lipschitz continuous on $B(x, R) \setminus A_{x}$.  This 
poses a problem for studying the supposed limit function $u_\8$.  Though the set $A_{x}$ from 
Proposition~\ref{prop:Lipschitz continuity around a point} has zero density at point $x$, it could still be very much 
in the image of the functions $\phi_n^\epsilon$ used for comparing $u_n$ with $u_\infty$ (note that the $\phi_n^\epsilon$ are not 
necessarily continuous).  This could have the effect of a limit function $u_\8$ having little to do with the values of $u$ outside 
$A_{x}$, a set which is quantitatively marginal as we have shown.  It seems prudent to search for a limit 
function $u_\8$ that reflects the values of $u$ on $B(x, R) \setminus A_{x}$ if we want to explore any properties 
of this limit function.  With that in mind, we make the following definition:

\begin{definition}
For $x\in X$ for which the conclusion of 
Proposition~\ref{prop:Lipschitz continuity around a point} holds,
we say that the functions $\phi_n$ in the definition of pointed measured Gromov-Hausdorff convergence 
are {\it adapted to} $u$ if
\[ 
\phi_n (B_{X_\8} (x_\8, R)) \cap A_{x} = \emptyset 
\]
for all {$n \ge N_{\epsilon,R}$.}
\end{definition}

Thanks to the following lemma we know that whenever
$(X_\8,  d_\8,x_\8,\mu_\8)$ is a tangent space to $X$ at $x$
as in Definition~\ref{defn:tangent-space}, we can always find a subsequence of the sequence $(X_n,d_n,x,\mu_n)$
such that the corresponding maps $\phi_n$ are adapted to $u$.

\begin{lemma}
Suppose $u \in BV(X)$ and $x\in X$ is a point for which the conclusion of 
Proposition~\ref{prop:Lipschitz continuity around a point} holds.
If $(X_\8, d_\8,x_\8, \mu_\8)$ is a pointed measured Gromov-Hausdorff 
limit of $(X, d_n, x, \mu_n)$ 
for some positive sequence $r_n \to 0$, then 
there exist functions $\phi_n$
that are adapted to $u$ at $x$.
\end{lemma}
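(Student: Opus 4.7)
The plan is to start from any sequence $\phi_n\colon X_\8\to X$ witnessing the pointed measured Gromov-Hausdorff convergence of $(X,d_n,x,\mu_n)$ to $(X_\8,d_\8,x_\8,\mu_\8)$, and perturb each $\phi_n$ at scale $\eps r_n$ in the original metric (equivalently, at scale $\eps$ in $d_n$) to push it off the exceptional set $A_x$ on the ball $B_{X_\8}(x_\8,R)$. Since the perturbation scale is small in $d_n$, the approximate-isometry and covering properties in Definition~\ref{pGH} will survive with only an additive $O(\eps)$ loss.

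The first substantive step is an $\eps$-density estimate: for fixed $R>0$ and $\eps\in(0,R)$, and for all sufficiently large $n$, every point $y\in B_{X_n}(x,2R)$ satisfies $B_X(y,\eps r_n)\setminus A_x\ne\emptyset$. Writing $\delta:=d(x,y)/r_n\le 2R$, we have $B_X(y,\eps r_n)\subset B_X(x,(\delta+\eps)r_n)$, and~\eqref{eq:homogenous dimension} gives
\[
\mu(B_X(y,\eps r_n))\ \ge\ \frac{1}{C_d^2}\left(\frac{\eps}{2R+\eps}\right)^{Q}\mu(B_X(x,(2R+\eps)r_n)).
\]
Combined with $\mu(A_x\cap B_X(y,\eps r_n))\le \mu(A_x\cap B_X(x,(2R+\eps)r_n))$, we obtain
\[
\frac{\mu(A_x\cap B_X(y,\eps r_n))}{\mu(B_X(y,\eps r_n))}\ \le\ C_d^2\left(\frac{2R+\eps}{\eps}\right)^{Q}\frac{\mu(A_x\cap B_X(x,(2R+\eps)r_n))}{\mu(B_X(x,(2R+\eps)r_n))}.
\]
Because $(2R+\eps)r_n\to 0$ and $A_x$ has density $0$ at $x$, the right-hand side tends to zero as $n\to\8$, uniformly in $y\in B_{X_n}(x,2R)$; in particular it is strictly less than $1$ for $n$ large, so a point of $B_X(y,\eps r_n)\setminus A_x$ exists.

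Next I would define the modified maps $\phi_n'$. For $n$ large enough, the approximate-isometry property of $\phi_n$ ensures $\phi_n(B_{X_\8}(x_\8,R))\subset B_{X_n}(x,R+\eps)\subset B_{X_n}(x,2R)$. For each $z\in B_{X_\8}(x_\8,R)\setminus\{x_\8\}$ choose $\phi_n'(z)$ to be any point in $B_X(\phi_n(z),\eps r_n)\setminus A_x$, set $\phi_n'(x_\8):=x$ (which is permissible because $x\notin A_x$), and extend $\phi_n'$ arbitrarily outside $B_{X_\8}(x_\8,R)$. To obtain a single global sequence that is adapted on \emph{every} ball, I would diagonalize over a sequence $(R_k,\eps_k)$ with $R_k\to\8$ and $\eps_k\to 0$: choose integers $n_1<n_2<\cdots$ so that the previous construction works for $(R_k,\eps_k)$ whenever $n\ge n_k$, and for $n\in[n_k,n_{k+1})$ define $\phi_n'$ using the parameters $(R_k,\eps_k)$. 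Since $d_n(\phi_n'(z),\phi_n(z))<\eps_k$ on $B_{X_\8}(x_\8,R_k)$, the conditions (1) and (2) of Definition~\ref{pGH} transfer from $\phi_n$ to $\phi_n'$ with $\eps$ replaced by $3\eps_k\to 0$, so $\phi_n'$ still witnesses the pointed Gromov-Hausdorff convergence.

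The main obstacle is the uniformity in the density estimate of the first step: the density-zero hypothesis on $A_x$ is stated only at the center $x$, whereas we need emptiness of $A_x$ in balls $B_X(y,\eps r_n)$ whose centers range over the entire ball $B_{X_n}(x,2R)$. The crux is that the comparison prefactor $C_d^2((2R+\eps)/\eps)^Q$ coming from~\eqref{eq:homogenous dimension} is bounded uniformly in $y$, so a single density-zero statement at the (vanishing) scale $(2R+\eps)r_n$ suffices. The measure-convergence part of the conclusion $\mu_n\overset{*}\rightharpoonup\mu_\8$ is unaffected because it is formulated via the ambient embeddings $\iota_n$ rather than the maps $\phi_n$, so replacing $\phi_n$ by $\phi_n'$ does not disturb it.
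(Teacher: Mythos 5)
Your argument is essentially the same as the paper's: both use the doubling-derived volume lower bound \eqref{eq:homogenous dimension} on $\eps r_n$-balls centered at points near $x$, combined with the density-zero property of $A_x$ at $x$, to show $X\setminus A_x$ is $\eps$-dense in the relevant scaled ball, and then perturb $\phi_n$ with an $O(\eps)$ loss in the Gromov--Hausdorff conditions. The paper keeps the comparison normalized by $\mu(B(x,r_n))$ and works with a single fixed $R$ (so it omits your closing diagonalization over $(R_k,\eps_k)$, which is a harmless extra since the definition of ``adapted'' only involves the one $R$ from Proposition~\ref{prop:Lipschitz continuity around a point}); otherwise the two proofs coincide.
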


\begin{proof}
Assume for simplicity that
{$R=1/2$}
and fix $0<\epsilon \le 1$. By the doubling condition, we have that
\begin{equation}\label{eq:Ax0 having density zero}
\frac{\mu (B(x, 2s) \cap A_{x})}{\mu (B(x, s))} \to 0 \quad\textrm{as }s\to 0.
\end{equation}
By \eqref{eq:homogenous dimension}, there exists $Q>1$ such that
whenever $y\in B(x,s)$ and $0<t\le s$,
\[
 \frac{1}{C} \left(\frac{t}{s}\right)^{Q}\le \frac{\mu(B(y,t))}{\mu(B(x,s))}.
\]
It follows that if $y\in B(x,r_n)$ (i.e. $y\in B_n(x,1)$), then
\[
\frac{1}{C} \epsilon^{Q}\le \frac{\mu(B(y,\epsilon r_n))}{\mu(B(x,r_n))}=\mu_n(B_n(y,\epsilon)),
\]
where $B_n$ is the ball in the metric $d_n=r_n^{-1} d$.
On the other hand, \eqref{eq:Ax0 having density zero} implies that
for sufficiently small $r_n$,
\[
  \mu_n(B_n(x,2)\cap A_{x})=\frac{\mu (B(x, 2r_n) \cap A_{x})}{\mu (B(x, r_n))}
  < \frac{1}{C} \epsilon^{Q}.
\]
It follows that for such $n$, the set
$B_n(y,\epsilon)\setminus A_{x}$ has
positive measure and therefore cannot be empty. That is,
$X\setminus A_{x}$ is $\epsilon$-dense in $B_n(x,1)$.

The points in $A_{x}$ can be easily avoided by redefining the approximating
isometries $\phi_n$
such that points~1 and~2 of Definition~\ref{pGH} 
still hold, but with
$3 \epsilon$ rather than $\epsilon$.
\end{proof}

\begin{lemma}\label{lem:Lip-density}
Let $v$ be a Lipschitz function on a metric measure space $(Z,d_Z,\mu_Z)$,
where $\mu_Z$ is doubling. Suppose that
$K\subset Z$ and $z\in Z$ such that
\[
\lim_{r\to 0^+}\frac{\mu_Z(B(z,r)\cap K)}{\mu_Z(B(z,r))}=0.
\]
Then
\[
\text{\rm Lip}\, v(z):=\limsup_{Z\ni y\to z}\frac{|v(z)-v(y)|}{d_Z(y,z)}=\limsup_{Z\setminus K\ni y\to z}\frac{|v(z)-v(y)|}{d_Z(y,z)}.
\]
\end{lemma}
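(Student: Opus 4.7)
The inequality $\ge$ is immediate since $Z\setminus K\subset Z$. For the reverse direction, the plan is to take an extremising sequence $y_n\to z$ in $Z$ and, for each $n$, perturb $y_n$ slightly to a nearby point in $Z\setminus K$ without substantially changing the difference quotient; Lipschitz continuity of $v$ will absorb the perturbation, and the doubling property combined with the density hypothesis will guarantee that such a perturbation exists.

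Denote $L:=\Lip v(z)$. If $L=0$ there is nothing to prove, so assume $L>0$ and choose $y_n\in Z\setminus\{z\}$ with $y_n\to z$ and $q_n:=|v(z)-v(y_n)|/r_n\to L$, where $r_n:=d_Z(z,y_n)$. Fix $\eps\in(0,1)$. Since $B(y_n,\eps r_n)\subset B(z,(1+\eps)r_n)$, iterating the doubling condition for $\mu_Z$ exactly as in \eqref{eq:homogenous dimension} yields a constant $c_\eps>0$, depending only on $\eps$ and the doubling constant of $\mu_Z$, such that
\[
\mu_Z(B(y_n,\eps r_n))\ge c_\eps\,\mu_Z(B(z,(1+\eps)r_n)).
\]
The density hypothesis then forces $\mu_Z(B(z,(1+\eps)r_n)\cap K)/\mu_Z(B(z,(1+\eps)r_n))<c_\eps$ for all sufficiently large $n$, so $\mu_Z(B(y_n,\eps r_n)\setminus K)>0$ and in particular this set is nonempty. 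Pick any $y_n'\in B(y_n,\eps r_n)\setminus K$; by the triangle inequality $(1-\eps)r_n\le d_Z(z,y_n')\le (1+\eps)r_n$, so $y_n'\to z$ with $y_n'\ne z$ for $n$ large.

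Writing $L_v:=\LIP v<\infty$, Lipschitz continuity gives $|v(y_n)-v(y_n')|\le L_v\eps r_n$, whence
\[
\frac{|v(z)-v(y_n')|}{d_Z(z,y_n')}\ge \frac{|v(z)-v(y_n)|-L_v\eps r_n}{(1+\eps)r_n}=\frac{1}{1+\eps}\bigl(q_n-L_v\eps\bigr).
\]
Since each $y_n'\in Z\setminus K$ and $y_n'\to z$, taking $\limsup_n$ yields
\[
\limsup_{Z\setminus K\ni y\to z}\frac{|v(z)-v(y)|}{d_Z(z,y)}\ge \frac{L-L_v\eps}{1+\eps},
\]
and letting $\eps\searrow 0$ closes the argument.

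The plan has no substantial obstacle; the only delicate point is the two-scale calibration: we must ensure that the small ball $B(y_n,\eps r_n)$ retains enough $\mu_Z$-mass so that the density-zero set $K$ cannot exhaust it. This is precisely the role of the doubling iteration \eqref{eq:homogenous dimension}, which guarantees a uniform-in-$n$ lower bound $c_\eps$ for the relevant mass ratio, eventually dominating the vanishing relative measure of $K$ in $B(z,(1+\eps)r_n)$.
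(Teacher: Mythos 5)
Your proof is correct and follows essentially the same strategy as the paper: take an extremising sequence for $\Lip v(z)$, use the doubling property and the density-zero hypothesis to perturb each point by a small multiple of its distance from $z$ into $Z\setminus K$, and absorb the perturbation via Lipschitz continuity. The only cosmetic difference is that the paper establishes the existence of the nearby point in $Z\setminus K$ by contradiction (assuming $B(y_i,\eps_0 d(z,y_i))\subset K$ along a subsequence violates the zero-density hypothesis), while you argue directly that iterated doubling forces $\mu_Z(B(y_n,\eps r_n)\setminus K)>0$ for large $n$.
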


\begin{proof}
Clearly 
\[
\limsup_{Z\ni y\to z}\frac{|v(z)-v(y)|}{d_Z(y,z)}\ge\limsup_{Z\setminus K\ni y\to z}\frac{|v(z)-v(y)|}{d_Z(y,z)}.
\]
Let $y_i\in Z$ be a sequence converging to $z$ such that 
\[
\frac{|v(z)-v(y_i)|}{d_Z(y_i,z)}\to \limsup_{Z\ni y\to z}\frac{|v(z)-v(y)|}{d_Z(y,z)}.
\]
If we have a subsequence of this sequence that lies in $Z\setminus K$, then we have the desired equality.
So suppose without loss of generality that each $y_i\in K$. We claim that for each $\eps>0$ there is some
positive integer $N_\eps$ such that when $i\ge N_\eps$, we have $d_Z(w,y_i)\le \eps d_Z(z,y_i)$
for some $w\in Z\setminus K$. Indeed, if this is
not the case, then there is a positive number $\eps_0$ and a subsequence $i_k$ such that
$B(y_{i_k},\eps_0 d_Z(z,y_{i_k}))\subset K$, in which case by the doubling property of $\mu_Z$ we have
\[
\limsup_{r\to 0^+}\frac{\mu_Z(B(z,r)\cap K)}{\mu_Z(B(z,r))}\ge \frac{1}{C_d^\alpha}>0
\]
where $\alpha$ is the real number for which $2^\alpha \eps_0\ge 4$. This would violate the assumption on
the density of $K$ at $z$. 

Now fixing $\eps>0$, with $w_i\in Z\setminus K$ such that $d_Z(y_i,w_i)\le \eps d_Z(z,y_i)$, we have
\begin{align*}
\frac{|v(z)-v(y_i)|}{d_Z(y_i,z)}\le \frac{|v(z)-v(w_i)|}{d_Z(z,w_i)}\frac{d_Z(z,w_i)}{d_Z(y_i,z)}
   +\frac{|v(w_i)-v(y_i)|}{d_Z(y_i,z)}
   &\le \frac{|v(z)-v(w_i)|}{d_Z(z,w_i)}\frac{d_Z(z,w_i)}{d_Z(y_i,z)}+L\eps\\
   &\le \frac{|v(z)-v(w_i)|}{d_Z(z,w_i)}\left[1+\eps\right]+L\eps
 \end{align*}
 where $L$ is a Lipschitz constant of $v$. Letting $i\to\infty$ followed by $\eps\to 0^+$ gives the desired
 identity.
\end{proof}

%

Now we wish to speak about functions $u_\8$ that are limits of $u_n$ according
to~\eqref{zoomlimit}, with the functions $\phi_n$ adapted to $u$.
Note that the values of $u_n$ on $A_{x}$ are not tested in~\eqref{zoomlimit} by the maps $\phi_n$.  
So $u_\8$ will also be the limit 
of functions $(\widetilde{u})_n$, where $\tilde{u}$ is any McShane extension of 
$u\vert_{B(x,R)\setminus A_{x}}$.
By H\"older's inequality, the $1$-Poincar\'{e} inequality
implies the $p$-Poincar\'{e} inequality for all $1 < p < \infty$.
By~\cite[Proposition 4.3]{KKST}, for each $k\in\N$ there is a \emph{Lipschitz} function $v_k\in BV(X)$
such that 
\[
\mu(\{y\in X\, :\, u(y)\ne v_k(y)\})<1/k.
\]
Since for any measurable set $K\subset X$ we have that the \emph{upper} density of $K$ at almost every
point in $X\setminus K$ is zero, by modifying the set $K_k=\{y\in X\, :\, u(y)\ne v_k(y)\}$ on a set of measure zero
we can assume that $\mu(K_k)<1/k$ and that for every $x\in X\setminus K_k$ we have
\[
\limsup_{r\to 0^+}\frac{\mu(B(x,r)\cap K_k)}{\mu(B(x,r))}=0
\]
and that $v_k$ is asymptotically generalized linear in the sense of~\cite[Theorem~3.7]{Che}
and that
the analysis of Proposition~\ref{prop:Lipschitz continuity around a point} holds for $x$.  By
further enlarging $K_k$ if necessary (without increasing its measure), we can also assume that
$x$ is a Lebesgue point of $\Lip v_k$. Since both $K_k$ and $A_x$ have upper density zero at $x$,
from Lemma~\ref{lem:Lip-density} we know that $x$ is a Lebesgue point also for 
$\text{Lip}\, \widetilde{u}$ of 
any Lipschitz extension $\widetilde{u}$
of $u\vert_{B(x,R)\setminus [A_x\cup K_k]}$ to $B(x,R)$
(of course, this extension could depend on $x$ as well, but in this case we can choose $v_k$ itself to be that
extension).
Note that we then have by Lemma~\ref{lem:Lip-density} that
\[
\text{Lip}\, u\vert_{B(x,R)\setminus A_x}(x)=\text{Lip}\, \widetilde{u}(x)=\text{Lip}\, v_k(x).
\]
Thus the theory developed in~\cite[Section~10]{Che}
is applicable for $x\in X\setminus K_k$. Let $N=\bigcap_{k\in\N}K_k$. Then $\mu(N)=0$, and for
each $x\in X\setminus N$ the theory developed in~\cite[Section~10]{Che} is applicable. Therefore
by~\cite[Section~10]{Che},  
this immediately implies the equality of upper and lower Lipschitz constants for $\widetilde{u}$ and that
\[ 
\Lip u_\8 \equiv \Lip \widetilde{u}(x)=\Lip u\vert_{B(x,R)\setminus A_x}(x),
\]
and this constant {minimal $p$-weak upper gradient, for any $1<p<\infty$, of}
$u_\infty$ is bounded above by $C g(x)$ thanks to
Proposition~\ref{prop:Lipschitz continuity around a point}.
On the other hand, by the lower semicontinuity of BV energy, we know that
$dV(v_k,\cdot)\le \Lip v_k\, d\mu$, and so the Radon-Nikodym derivative of 
$V(v_k,\cdot)$ with respect to $\mu$, which by
Lemma~\ref{lem:equal-energy} is also equal to the Radon-Nikodym derivative $g$ of
$V(u,\cdot)$ with respect to $\mu$ in $X\setminus K_k$, is bounded above by $\Lip v_k$.
Therefore we have
\[
g(x)\le \Lip u_\8\le C\, g(x). 
\]

We collect these observations below.

\begin{theorem}\label{thm:main1}
Let $u \in BV(X)$.  Then for $\mu$-a.e. $x\in X$ and any tangent space $(X_{\infty},d_{\infty},x_{\infty},\mu_{\infty})$, 
any function $u_{\infty}$ that arises as a
limit adapted to $u$ at $x$ has a constant minimal $p$-weak upper gradient for each $p>1$
and that constant is less than $C g(x)$,
where $C$ is as in Proposition \ref{prop:Lipschitz continuity around a point}. Furthermore, 
with $h$ the minimal $1$-weak upper gradient of $u_\8$, we have that $L/(4C_0)\le h\le L$ where $L$ is the constant 
minimal $p$-weak upper gradient, and $C_0$ depends solely on the doubling and the $1$-Poincar\'e constants of $X_\8$.
\end{theorem}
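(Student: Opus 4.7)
Plan: I would split the theorem into two parts---(i) existence of a constant minimal $p$-weak upper gradient $L \le C g(x)$ for each $p > 1$, and (ii) the sandwich $L/(4C_0) \le h \le L$ for the minimal $1$-weak upper gradient $h$. Part (i) and the upper bound $h \le L$ of (ii) are essentially already assembled in the paragraphs immediately preceding the theorem, so the main new ingredient is the lower bound $h \ge L/(4C_0)$.

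For (i), at a $\mu$-a.e.\ good point $x$, Proposition~\ref{prop:Lipschitz continuity around a point} produces a density-zero set $A_x \not\ni x$ with $u|_{B(x,R) \setminus A_x}$ being $C g(x)$-Lipschitz. A McShane extension $\widetilde u$ is $C g(x)$-Lipschitz on $B(x,R)$, its blow-ups $\widetilde u_n = (\widetilde u - \widetilde u(x))/r_n$ are $C g(x)$-Lipschitz on $(X, d_n)$, and because the $\phi_n$ are adapted to $u$ at $x$ the limit $u_\infty$ from~\eqref{zoomlimit} agrees with the blow-up of $\widetilde u$ and is $L$-Lipschitz with $L \le C g(x)$. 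Combining the Lipschitz BV-approximation $v_k$ of \cite{KKST} with Lemma~\ref{lem:Lip-density} identifies $\Lip \widetilde u(x) = \Lip v_k(x) = L$, so Cheeger's Rademacher theorem \cite[Section~10]{Che} applied to $v_k$ exhibits $u_\infty$ as a generalized linear function with constant minimal $p$-weak upper gradient $L$ for every $p > 1$. The upper bound $h \le L$ is then automatic: $L$-Lipschitzness of $u_\infty$ makes the constant $L$ an upper gradient (hence a $1$-weak upper gradient), so minimality forces $h \le L$ $\mu_\infty$-a.e.

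For the lower bound $h \ge L/(4C_0)$ I would argue by contradiction. Suppose $\mu_\infty(\{h < L/(4C_0)\}) > 0$. Cheeger's theory applied to $u_\infty$ on $X_\infty$ (which inherits doubling and a $1$-Poincar\'e inequality from $X$ quantitatively) also identifies $\lip u_\infty = L$ $\mu_\infty$-a.e., so one may select $y_0 \in X_\infty$ with $\lip u_\infty(y_0) = L$, $y_0$ a Lebesgue point of $h$, and $h(y_0) < L/(4C_0)$. Along a sequence $r \searrow 0$ there exist $z_r \in B(y_0, r)$ with $|u_\infty(z_r) - u_\infty(y_0)| \ge (L - \eps)r$, and $L$-Lipschitzness of $u_\infty$ keeps $|u_\infty - u_\infty(y_0)| \ge (L - 2\eps)r$ throughout a sub-ball $B(z_r, \eta r)$. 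The doubling property of $\mu_\infty$ then yields
\[
\vint_{B(y_0, 2r)} |u_\infty - (u_\infty)_{B(y_0, 2r)}| \, d\mu_\infty \ge c_0 L r
\]
with $c_0$ depending only on the doubling constant of $\mu_\infty$. Invoking the $1$-Poincar\'e inequality on $X_\infty$ with constant $C_P^\infty$ gives $\vint_{B(y_0, 2r)} h \, d\mu_\infty \ge c_0 L/(2 C_P^\infty)$, so letting $r \to 0$ and using Lebesgue differentiation forces $h(y_0) \ge c_0 L/(2 C_P^\infty)$; absorbing $c_0$ and $C_P^\infty$ into a single $C_0$ contradicts $h(y_0) < L/(4C_0)$.

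The principal obstacle is the lower bound argument above: one must simultaneously select $y_0$ enjoying several $\mu_\infty$-a.e.\ properties (Lebesgue point of $h$, point of maximal lower Lipschitz constant for $u_\infty$, and density-one point of the putative bad set), and then carefully track the explicit dependence on the doubling and Poincar\'e constants of $X_\infty$ so that the final constant matches the stated $4C_0$. A secondary issue is the justification that Cheeger's identification $\lip u_\infty = L$ transfers to $X_\infty$, which reduces to the quantitative inheritance of doubling and $1$-PI by the tangent space.
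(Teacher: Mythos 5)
Your overall strategy is correct, and for part (i) and the upper bound $h\le L$ you follow the same reasoning as the paper (the paragraphs preceding the theorem assemble these facts, and the paper's proof explicitly refers back to them). However, for the lower bound $L/(4C_0)\le h$ you take a genuinely different route. The paper uses a telescoping argument to show that for small $\eps>0$ the function $4C_0\,\mathcal M_{4\eps}h$ is a pointwise Haj\l asz-type gradient of $u_\infty$ on $\eps$-scales, invokes the local version of~\cite[Theorem~10.2.8]{HKST} to conclude that $4C_0\,\mathcal M_{4\eps}h$ is an honest upper gradient of $u_\infty$, and then uses the minimality of the constant $L$ as a $p$-weak upper gradient to get $L\le 4C_0\,\mathcal M_{4\eps}h$, after which Lebesgue differentiation as $\eps\to 0$ yields $L\le 4C_0 h$. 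You instead fix a single good point $y_0\in X_\infty$ where $h$ has a Lebesgue value, $\lip u_\infty(y_0)=L$ (valid via Cheeger's $\lip=\Lip$ a.e.\ theorem on the tangent PI space, combined with $\Lip u_\infty\equiv L$), and run a single-scale Poincar\'e estimate: the oscillation bound at $y_0$ (comparing $u_\infty$ near $y_0$ with $u_\infty$ near the extremizing point $z_r$, and using doubling to bound below the measure of the two small sub-balls) forces $\vint_{B(y_0,2r)}h\,d\mu_\infty\gtrsim L$, and Lebesgue differentiation gives $h(y_0)\gtrsim L$, contradicting the choice of $y_0$. Both arguments produce a constant depending only on the doubling and $1$-Poincar\'e data of $X_\infty$; the paper's upper-gradient comparison is slightly slicker and produces the constant in the advertised $4C_0$ form directly, while your direct Poincar\'e argument avoids introducing the restricted maximal function and the Haj\l asz-to-upper-gradient lemma. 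One small point worth making explicit in your write-up: to pass from $|u_\infty-u_\infty(y_0)|\ge(L-2\eps)r$ on $B(z_r,\eta r)$ to the claimed lower bound on $\vint_B|u_\infty-(u_\infty)_B|\,d\mu_\infty$, you should split into the two cases $(u_\infty)_B\gtrless u_\infty(y_0)+\tfrac12(L-\eps)r$ and use the sub-ball $B(y_0,\eta r)$ in the first case and $B(z_r,\eta r)$ in the second; otherwise one cannot rule out $(u_\infty)_B$ sitting on top of the observed large values.
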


\begin{proof}
The proof of the first part of the theorem follows from the discussions above. Thus it now suffices
to prove the last statement of the theorem. From a telescoping argument for the Lipschitz function $u_\8$
on $X_\8$, we see that whenever $\eps>0$, for $z,w\in X_\8$ with $d(z,w)<\eps$ we have
\[
 |u(z)-u(w)|\le C_0\, d_{X_\8}(z,w)[\mathcal M_{4\eps}h(z)+\mathcal M_{4\eps}h(w)],
\]
where $\mathcal M_rh(o):=\sup_{0<\rho\le r}\vint_{B(o,\rho)}h\, d\mu_\8$ for $o\in X_\8$. Thus it follows
from the local version of~\cite[Theorem~10.2.8]{HKST} that $4C_0\, \mathcal M_{4\eps}h$ is an upper gradient of $u$.
Therefore by the minimality of the constant function $L$ as a $p$-weak upper gradient of $u_\8$, we see that
$L\le 4C_0\, \mathcal M_{4\eps}h$ for each $\eps>0$. Letting $\eps\to 0$ and invoking the Lebesgue differentiation theorem,
we see that $L\le 4C_0 h$. Finally, as $u_\8$ is Lipschitz, the constant function $L$ is also equal to 
$\Lip u_\8$ which is also an upper gradient of $u_\8$, and so by the minimality of $h$ as a $1$-weak upper gradient, 
we see that $h\le L$, completing the proof.
\end{proof}



\begin{theorem} \label{thm:tangentharm}
Let $u\in BV(X)$. Then for $\mu$-a.e. $x\in X$ for which $g(x)>0$,
and for every tangent space
$(X_{\infty},d_{\infty},x_{\infty},\mu_{\infty})$,
any function $u_\infty$ that arises as a limit adapted to $u$ at $x$ satisfies
$u_{\infty}(x_{\infty})=0$ and 
\[
\frac{g(x)}{C}\le \underset{B(y,s)}{\var}u_{\infty}\le \LIP u_{\infty}\le Cg(x)
\]
for every $y\in X_{\infty}$ and $s>0$. 
Furthermore, $u_\infty$ is a function of least gradient.
For $\mu$-a.e.~$x\in X$ for which $g(x)=0$, $u_\infty$ is a constant function.
\end{theorem}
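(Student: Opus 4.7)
The plan is to exploit the generalized-linear structure of $u_\8$ recorded in Theorem~\ref{thm:main1}, and to upgrade the $p$-harmonicity for $p>1$ coming from Cheeger's theorem to $1$-harmonicity (least gradient) by passing to the limit $p\to 1^+$ in the associated variational inequality.

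First the easy parts. That $u_\8(x_\8)=0$ follows from $u_n(x)=0$ for every $n$ and Definition~\ref{def:conv-functions}. The upper bound $\LIP u_\8\le Cg(x)$ is part of Theorem~\ref{thm:main1}; moreover its proof (via~\cite[Section~10]{Che}) records the pointwise equalities $\lip u_\8(y)=\Lip u_\8(y)=L$ at every $y\in X_\8$, where $L$ is the common constant minimal $p$-weak upper gradient for $p>1$, satisfying $g(x)\le L\le Cg(x)$. The oscillation lower bound follows from this pointwise constancy of slope combined with the geodesic structure of $X_\8$: given $y\in X_\8$ and $s>0$, a short chaining argument along a geodesic starting at $y$, taking small steps in directions where the limit defining $\lip u_\8$ is nearly realized, produces a point $z\in B_\8(y,s)$ with $|u_\8(z)-u_\8(y)|\ge Ls/C$, giving $\underset{B_\8(y,s)}{\var}u_\8\ge L/C\ge g(x)/C$.

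The least gradient property is the main content. Since $u_\8$ coincides with the asymptotic limit of the Lipschitz approximation $v_k$ used in the proof of Theorem~\ref{thm:main1}, Cheeger's generalized-linearity theorem in~\cite[Section~10]{Che} applied to $v_k$ implies that $u_\8$ is $p$-harmonic on $X_\8$ for every $p>1$. Thus for every $\varphi\in\Lip_c(X_\8)$ supported in a ball $B\subset X_\8$ and every $p>1$,
\[
L^p\mu_\8(B)=\int_B \bigl(g_{u_\8}^{(p)}\bigr)^p\,d\mu_\8 \le \int_B \bigl(g_{u_\8+\varphi}^{(p)}\bigr)^p\,d\mu_\8 \le \int_B \bigl(g_{u_\8+\varphi}^{(1)}\bigr)^p\,d\mu_\8,
\]
where $g_f^{(p)}$ denotes the minimal $p$-weak upper gradient of $f$ and we have used that a $1$-weak upper gradient is also $p$-weak. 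Since $g_{u_\8+\varphi}^{(1)}$ is bounded by the classical Lipschitz upper gradient $L+\Lip\varphi$, dominated convergence as $p\to 1^+$ gives
\[
L\mu_\8(B) \le \int_B g_{u_\8+\varphi}^{(1)}\,d\mu_\8 = V(u_\8+\varphi,B),
\]
the last equality holding for the Lipschitz function $u_\8+\varphi$ by the equivalence of BV energies recalled in Section~2 (citing~\cite{AmbDiM}). Since $u_\8$ is $L$-Lipschitz, $V(u_\8,B)\le L\mu_\8(B)$, and combining these inequalities gives $V(u_\8,B)\le V(u_\8+\varphi,B)$ for every $\varphi\in\Lip_c(B)$. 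The extension to general $\varphi\in BV_c(X_\8)$ follows by approximating $\varphi$ in BV-strict convergence by Lipschitz functions and invoking the continuity of $V(u_\8+\cdot,B)$ in this topology. Hence $u_\8$ is of least gradient.

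Finally, if $g(x)=0$, Theorem~\ref{thm:main1} gives $\LIP u_\8\le Cg(x)=0$, so $u_\8$ is constant (equal to $0$ by the first bullet) and trivially of least gradient. The main technical subtlety of the proof is the chaining argument used for the oscillation lower bound, which requires carefully exploiting the fact that $\lip u_\8\equiv L$ everywhere in a general geodesic metric measure space; the $p\to 1^+$ passage and the BV-strict approximation are both standard.
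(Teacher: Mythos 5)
Your overall route (base-point normalization, Lipschitz bound from Theorem~\ref{thm:main1}, Cheeger $p$-harmonicity for $p>1$, pass $p\to 1^+$, trivial case $g(x)=0$) parallels the paper, and the $p\to 1^+$ passage you write out in detail is a reasonable reconstruction of what the paper delegates to~\cite[Theorem~3.3]{KLLS}. The treatment of $u_\8(x_\8)=0$, of the upper Lipschitz bound, and of the case $g(x)=0$ all match the paper's argument.

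The genuine gap is the oscillation lower bound $g(x)/C\le\underset{B(y,s)}{\var}u_\8$. You assert that knowing $\lip u_\8(y)=\Lip u_\8(y)=L$ at every $y$, together with the geodesic structure, yields ``a short chaining argument\dots produces a point $z\in B_\8(y,s)$ with $|u_\8(z)-u_\8(y)|\ge Ls/C$.'' This does not follow. The constancy $\lip u_\8\equiv L$ only guarantees that, at each point $w$, there exist \emph{arbitrarily small} radii $r$ on which $\sup_{z\in B(w,r)}|u_\8(z)-u_\8(w)|\ge(L-\epsilon)r$; it says nothing about intermediate or large radii directly. An attempted chaining along a geodesic fails because the point at which the local supremum is nearly attained at each step need not lie on the chosen geodesic, and the successive increments $u_\8(z_{k+1})-u_\8(z_k)$ can alternate in sign, so the oscillation need not accumulate to $\approx Ls$ over a chain of total length $s$. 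In short, pointwise constancy of $\lip$ is a local, undirected quantity, whereas the statement you need is a uniform lower oscillation bound at every scale. The paper handles this by appealing to Keith~\cite[Theorem~6.2.1]{Kei04}, which gives directly that $\lip\widetilde u(x)\le\underset{B(y,s)}{\var}u_\8$ for every $y\in X_\8$ and $s>0$ (with $\widetilde u$ the McShane extension of $u|_{B(x,1)\setminus A_x}$), and then uses Lemma~\ref{lem:equal-energy} and the Poincar\'e inequality to get $g(x)/C\le\lip\widetilde u(x)$. That nontrivial input is missing from your sketch, and without it the lower bound is not established.

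Two smaller remarks. First, in the $p\to 1^+$ step you should state that the constant $L$ is independent of $p\in(1,\infty)$ (which is what Theorem~\ref{thm:main1}/Cheeger gives) before writing $L^p\mu_\8(B)=\int_B(g^{(p)}_{u_\8})^p\,d\mu_\8$ for all $p$; otherwise the left-hand side would depend on $p$. Second, the final extension from $\varphi\in\Lip_c$ to $\varphi\in BV_c(X_\8)$ via BV-strict approximation is asserted rather than argued; the map $\psi\mapsto V(u_\8+\psi,B)$ is lower semicontinuous under $L^1$-convergence but not automatically continuous under strict convergence, so the approximation needs to be set up so that the energies converge from the correct side. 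This is standard but worth pinning down, since the paper avoids the issue by citing~\cite{KLLS}.
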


\begin{proof}
The inequality $\underset{B(y ,s)}{\var}u_{\infty}\le \LIP u_{\infty}$ is true by definition, and the inequality $\LIP u_{\infty}\le Cg(x)$ 
follows from Proposition \ref{prop:Lipschitz continuity around a point} and~\cite[Section~10]{Che}.   For the inequality 
$\frac{g(x)}{C}\le \underset{(y,s)}{\var}u_{\infty}$ we first note that by~\cite[Theorem~6.2.1]{Kei04},
\[ 
   \lip \widetilde{u}(x) \leq \underset{B(y,s)}{\var}u_{\infty}, 
\]
where $\widetilde{u}$ is a McShane  extension of $u\vert_{B(x,1)\setminus A_{x}}$ to $B(x,1)$.
Now note by Lemma~\ref{lem:equal-energy} that
\[ 
\frac{g(x)}{C} \leq \lip \widetilde{u}(x).
 \]

By~\cite{Che}
we know that $u_\infty$ is $p$-harmonic
for each $p>1$. Letting $p\to 1^+$, it follows from~\cite[Theorem 3.3]{KLLS}
that $u_\infty$ is a function of least gradient in $X_\infty$.

Finally, if $g(x)=0$ and $x$ is a point of density $1$ for the set $\{y\in X\, :\, g(y)=0\}$, then we can choose for each $n\in\N$
a set $B(x,r_n)\setminus A_{x,1/n}$ as in Proposition~\ref{prop:Lipschitz continuity around a point} such that 
$u$ is $1/n$-Lipschitz on $B(x,r_n)\setminus A_{x,1/n}$.  Thus the limit function $u_\infty$ is $1/n$-Lipschitz continuous
for each $n\in\N$, and so is $0$-Lipschitz, that is, $u_\infty$ is constant.
\end{proof}

The focus of the next section will be to study asymptotic behavior of the characteristic function $\chi_E$
of a set $E$ of finite perimeter at a boundary point.
In considering such behavior, it is not possible to obtain a fruitful 
notion of the asymptotic limit of $\chi_E$ in a manner analogous to the above. Instead of considering a sequence of
scaled versions of $\chi_E$, as with the scaled versions $u_n=[u-u(x)]/r_n$ above, we consider the scaled versions
of the \emph{measures} $\mu_E$ given by $d\mu_{E,n}:=\mu(B(x,r_n))^{-1}\chi_E\, d\mu$, and study weak* limits of 
such measures. The rest of this section discusses how the two notions, one dealing with a scaled version of the function
and the other with a scaled version of the measure, are related.

We fix a sequence $X_n=(X, d_n, x,\mu_n)$ that converges in the pointed measured Gromov-Hausdorff sense
to a tangent space $X_\infty=(X_\infty,d_\infty,x_\infty,\mu_\infty)$ as discussed above, and for such a sequence we
let $\nu_n$ be the measure on $X_n$ given by 
\[
d\nu_n :=\mu(B(x,r_n))^{-1} (u-u(x))/r_n\,  d\mu.
\]
We wish to show that the sequence of measures $\nu_n$ has a
subsequence that converges to the measure $u_\infty \,d\mu_\infty$.

\begin{theorem}\label{thm:main2}
Let $u\in BV(X)$. Then for $\mu$-a.e. $x\in X$ we have the following:
if $(X_{\infty},d_{\infty},x_{\infty},\mu_{\infty})$ is any tangent space
to $X$ at $x$, and $u_{\infty}$ is a function that arises as a
limit adapted to $u$ at $x$, then also
\[
d\nu_n=\mu(B(x,r_n))^{-1}(u-u(x))/r_n\, d\mu \overset{*}{\rightharpoonup} u_\infty \,d\mu_\infty 
\quad\textrm{as }n\to\infty.
\]
\end{theorem}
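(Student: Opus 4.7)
The plan is to test $d\nu_n$ against an arbitrary continuous function $\psi$ on $Z$ with compact support, say $\supp\psi\subset B_Z(z_0,R_0)$, and verify that
\[
\int_{X_n}\psi(\iota_n(y))\, u_n(y)\, d\mu_n(y) \longrightarrow \int_{X_\infty}\psi(\iota(z))\, u_\infty(z)\, d\mu_\infty(z),
\]
where $u_n(y):=(u(y)-u(x))/r_n$. I focus on the generic case $g(x)>0$; when $g(x)=0$, Theorem~\ref{thm:tangentharm} yields $u_\infty\equiv 0$ and the same strategy, applied with the $\delta$-Lipschitz alternative from Proposition~\ref{prop:Lipschitz continuity around a point}, shows the integrals tend to $0$ upon letting $\delta\searrow 0$.

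I restrict to points $x$ outside a $\mu$-null set where the conclusions of Proposition~\ref{prop:Lipschitz continuity around a point} and Lemma~\ref{lem:Lebesgue point lemma} hold. Let $A_x$ and $R>0$ be as in Proposition~\ref{prop:Lipschitz continuity around a point}, and let $\widetilde u$ be a McShane extension of $u|_{B(x,R)\setminus A_x}$ that is $Cg(x)$-Lipschitz on all of $X$. Setting $\widetilde u_n(y):=(\widetilde u(y)-u(x))/r_n$, this function is $Cg(x)$-Lipschitz on $(X,d_n)$ and coincides with $u_n$ off $A_x$. Since $\iota_n$ is an isometry and $\iota_n(x)\to z_0$, I pick $R_1>R_0$ so that $\psi\circ\iota_n$ is supported in $B_n(x,R_1)$ for all large $n$, and decompose
\[
\int_{X_n}\psi(\iota_n) u_n\, d\mu_n = \int_{X_n}\psi(\iota_n)\widetilde u_n\, d\mu_n + \int_{B_n(x,R_1)\cap A_x}\psi(\iota_n)(u_n-\widetilde u_n)\, d\mu_n.
\]
The error term vanishes in the limit: $|\widetilde u_n|\le Cg(x)R_1$ on $B_n(x,R_1)$ and $\mu_n(B_n(x,R_1)\cap A_x)\to 0$ from the density-zero property of $A_x$ combined with doubling, while
\[
\int_{B_n(x,R_1)\cap A_x}|u_n|\, d\mu_n = \frac{1}{r_n\mu(B(x,r_n))}\int_{B(x,r_n R_1)\cap A_x}|u-u(x)|\, d\mu \longrightarrow 0
\]
by Lemma~\ref{lem:Lebesgue point lemma} (absorbing the doubling factor between $\mu(B(x,r_n))$ and $\mu(B(x,r_n R_1))$).

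For the main term I lift to the ambient space $Z$. Extend each $\widetilde u_n$ to a $Cg(x)$-Lipschitz function $\widehat u_n$ on $Z$ by McShane extension of $\widetilde u_n\circ\iota_n^{-1}$ from $\iota_n(X_n)$, normalizing $\widehat u_n(z_0)=0$. Arzel\`a-Ascoli on the proper space $Z$ yields a subsequence along which $\widehat u_n$ converges locally uniformly to a Lipschitz function $\widehat u$ on $Z$. The compatibility $d_Z(\iota_n\circ\phi_n(z),\iota(z))\to 0$ from~\eqref{eq:compatibile-phi-iota}, together with the uniform Lipschitz bound on $\widehat u_n$ and the adapted convergence $\widetilde u_n\circ\phi_n=u_n\circ\phi_n\to u_\infty$ in $L^\infty$ on balls (valid because $\phi_n$ avoids $A_x$), forces $\widehat u\circ\iota=u_\infty$ on $X_\infty$. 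Since $\iota_{n,*}\mu_n\overset{*}{\rightharpoonup}\iota_*\mu_\infty$ on $Z$ and $\psi\widehat u_n\to \psi\widehat u$ uniformly on the compact set $\supp\psi$, the standard passage to the limit gives
\[
\int_{X_n}\psi(\iota_n)\widetilde u_n\, d\mu_n = \int_Z \psi\widehat u_n\, d(\iota_{n,*}\mu_n) \longrightarrow \int_Z \psi\widehat u\, d(\iota_*\mu_\infty) = \int_{X_\infty}\psi(\iota) u_\infty\, d\mu_\infty.
\]
The limit depends only on $u_\infty$, not on the subsequence chosen by Arzel\`a-Ascoli, so the full original sequence converges.

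The principal obstacle is the set $A_x$: although of density zero at $x$, it can in principle carry large values of $u_n$ and hence a non-negligible share of the mass of $\nu_n$. The density-zero property alone does not suffice to discard this contribution, so Lemma~\ref{lem:Lebesgue point lemma} is essential to control the $L^1$-mass of $u-u(x)$ on $A_x\cap B(x,r_n R_1)$ at the correct scale, and only after this control is established do the soft weak$^*$ and Arzel\`a-Ascoli arguments in $Z$ close the loop.
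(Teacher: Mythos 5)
Your proof is correct and follows essentially the same strategy as the paper's: replace $u_n$ by the scaled McShane extension $\widetilde{u}_n$ of $u$ off a density-zero set, control the discrepancy on the exceptional set via the Lipschitz bound (for $\widetilde{u}_n$) and Lemma~\ref{lem:Lebesgue point lemma} (for $u_n$), and pass to the limit for the Lipschitz part. The only differences are cosmetic: you reuse the set $A_x$ from Proposition~\ref{prop:Lipschitz continuity around a point} where the paper defines a fresh exceptional set with maximal-function threshold $1$, and you spell out the Arzel\`a--Ascoli step for the convergence $\widetilde{u}_n\,d\mu_n \overset{*}{\rightharpoonup} u_\infty\,d\mu_\infty$ where the paper cites the discussion following Definition~\ref{def:conv-functions}.
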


Naturally, this weak limit is attained along the same subsequence as $u_\infty$ is. In addition to the connection that
the above theorem makes between the way the limit function $u_\infty$ was obtained
above and the tangent-space analysis of sets of finite perimeter in the next section, the theorem also gives an
elegant way of constructing the limit function $u_\8$ \emph{without} having to modify the functions 
$\phi_n$ of Definition~\ref{pGH} to avoid the sets $A_x$.

\begin{proof}
Assume that $x$ is a Lebesgue point of $u$ such that
\[
\lim_{r\to 0}\,\vint_{B(x,r)}|g-g(x)|\,d\mu= 0\quad\textrm{and}\quad \lim_{r\to 0}\frac{V_s(u,B(x,r))}{\mu(B(x,r))}=0,
\]
and such that the conclusion of
Lemma \ref{lem:Lebesgue point lemma} holds.
Let $A$ be the set of all points $y\in X$ such
that for some $0<r\le 2d(x,y)$,
\[
\vint_{B(y,r)}|g-g(x)|\,d\mu+\frac{V_s(u,B(y,r))}{\mu(B(y,r))}>1.
\]
Just as in the proof of
Proposition \ref{prop:Lipschitz continuity around a point}, we get
\begin{equation}\label{eq:densityest}
\lim_{r \to 0} \frac{\mu(A \cap B(x,r))}{\mu(B(x,r))} = 0.
\end{equation}
By Lemma \ref{lem:Lebesgue point lemma} we obtain
\begin{equation}\label{eq:Lebesgue point type condition}
\lim_{r \to 0}\frac{1}{\mu(B(x,r))}\int_{A \cap B(x,r)} \frac{|u-u(x)|}{r}\,d\mu = 0.
\end{equation}

Fix $R>0$ and consider the embeddings 
 $\iota \colon X_\infty \to Z$ and
 $\iota_n \colon X_n \to Z.$ To prove the theorem,
we need to show that whenever $\phi$ is a continuous function supported in $B_{Z}(\iota(x_\8), R)$, we have
\[
\lim_{n\to\infty}\int_{Z}\phi\, \iota_{n,*}(d\nu_n)
=\int_{Z}\phi \, \iota_{*}(u_\infty\,d\mu_\infty).
\]
Just as in Proposition~\ref{prop:Lipschitz continuity around a point},
we have that for
{all sufficiently large}
$n\in\N$, $u|_{B(x,2Rr_n) \setminus A}$
is $C(g(x) + 1)$-Lipschitz. Then, define $\widetilde{u}$ to be the McShane extension of 
$u|_{B(x,2Rr_n) \setminus A}$. Just as in Theorem~\ref{thm:main1}, we see that $u_\8$ is a limit function of
$(\widetilde{u}-\widetilde{u}(x))/r_n$ which are all $C[g(x)+1]$-Lipschitz, 
and the sequence of measures $\mu(B(x,r_n))^{-1}r_n^{-1}(\widetilde{u}-\widetilde{u}(x))\, d\mu$ also converges weakly* to 
$u_\infty \,d\mu_\infty$. See Definition~\ref{def:conv-functions} and the discussion following it.
It thus suffices to show that if
$f_n := (\widetilde{u}-\widetilde{u}(x))/r_n$ 
and $g_n := (u-u(x))/r_n$, then we have that
$\mu(B(x,r_n))^{-1}(f_n-g_n)\, d\mu \overset{*}{\rightharpoonup} 0$.

 Let $\phi$ be a continuous function supported in the ball
 $B_{Z}(\iota(x_\8), R)$.
 Then for all sufficiently large $n$,
\begin{align*}
\bigg\vert \int_{Z} \phi\, \iota_{n,*}\left((f_n-g_n)\, d\mu_n\right) \bigg\vert
&= \bigg\vert\int_{X_n} \phi(\iota_n(y)) (f_n-g_n)(y) \,d\mu_n(y)\bigg\vert \\
&\leq  \frac{\Vert\phi\Vert_{\infty}}{\mu(B(x,r_n))} \int_{B(x,2Rr_n)} \left|\frac{\widetilde{u}-\widetilde{u}(x)}{r_n}-\frac{u-u(x)}{r_n}\right| \,d\mu \\
&\leq  \frac{\Vert\phi\Vert_{\infty}}{\mu(B(x,r_n))} \int_{B(x,2Rr_n) \cap A} 
  \left|\frac{\widetilde{u}-\widetilde{u}(x)}{r_n}\right|+\left|\frac{u-u(x)}{r_n}\right| \,d\mu.
\end{align*}
The last line follows since $\widetilde{u}(x)=u(x)$ and $\widetilde{u}=u$ on $B(x,2Rr_n) \setminus A$. 
The first term converges to zero since
$|\widetilde{u}-\widetilde{u}(x)|/r_n \leq C(g(x)+1)R$ on $B(x,2Rr_n)$ by the 
Lipschitz bound for $\widetilde{u}$ and \eqref{eq:densityest}. Finally, the second term converges to zero by~\eqref{eq:Lebesgue point type condition}.
\end{proof}

\section{Asymptotic limits of sets of finite perimeter}

Let $E\subset X$ be a set of finite perimeter, and
fix a point $x \in \partial^*E$ such that Lemma \ref{lem:perim-growth-bounds} holds.
We will zoom in at $x$ to study the asymptotic properties of $E$.
Let $r_n >0$ with $r_n \to 0$.
In this section, we always consider the sequence
\[ (X_n, d_n, x, \mu_n) := \left(X, \frac{1}{r_n} \cdot d, x, \frac{1}{\mu(B(x, r_n))}\cdot \mu \right) \]
under pointed measured Gromov-Hausdorff convergence.
We also wish to study the behavior of the measure $P(E,\cdot)$ as we zoom in, so let
\[ (X_n, d_n, x, P_n(E, \cdot)) := \left( X, \frac{1}{r_n} \cdot d, x, \frac{r_n}{\mu (B(x, r_n))} P(E, \cdot) \right). \]
Taking subsequences as necessary (not relabeled), we find
the following measures on the limit space $(X_{\infty}, d_{\infty}, x_{\infty})$:
\[ 
    \mu_n \overset{*}{\rightharpoonup} \mu_{\infty}, 
\]
\[ 
    \mu_n(\cdot \cap E) \overset{*}{\rightharpoonup} \mu_{\infty}^E, 
\]
\[ 
    \mu_n(\cdot \cap E^c) \overset{*}{\rightharpoonup} \mu_{\infty}^{E^c}, 
\]
\[ 
    \mbox{and } P_n(E ,\cdot)\overset{*}{\rightharpoonup} \pi_\infty. 
\]
Here,  $P_n$ is the scaled perimeter measure given by
\[
P_n(E, B(z,\rho))=r_n\, \frac{P(E,B(z,r_n\rho))}{\mu(B(x,r_n))}.
\]
In Section \ref{sec:PMGHC} it was noted that a tangent space
$(X_{\infty}, d_{\infty}, x, \mu_{\infty})$ always exists, is geodesic,
and $\mu_{\infty}$ is doubling and supports a $1$-Poincar\'e inequality.
Note that
$\mu_n(\cdot\cap E)+\mu_n(\cdot\cap E^c)=\mu_n$,
and so $\mu_{\infty}^E+\mu_{\infty}^{E^c}=\mu_{\infty}$.
The existence of $\pi_{\infty}$
follows from Lemma~\ref{lem:perim-growth-bounds}: 
by~\eqref{eq:density of perimeter}, for every $k\in\N$,
\begin{equation}\label{eq:bdd-perim-scaled}
\limsup_{n\to\infty}P_n(E,B_n(x,k))=\limsup_{n\to\infty}
\frac{r_n}{\mu (B(x, r_n))} P(E,B(x,k r_n))
\le C_d^{1+\lceil \log_2 k\rceil}.
\end{equation}
At various points in this
section, we specify additional conditions on $x\in \partial^*E$
by excluding $\mathcal H$-negligible parts of $\partial^* E$.

Since
$X$ is geodesic and $\mu$ is doubling, the space satisfies
the following \emph{annular decay property}: there exists
$\delta=\delta(C_d)\in (0,1]$ such that for all $y\in X$, $r>0$,
and $0<\eps<1$, we have
\begin{equation}\label{eq:annular decay property}
\mu(B(y,r)\setminus B(y,r(1-\eps)))\le C \eps^{\delta}\mu(B(y,r)),
\end{equation}
see \cite[Corollary 2.2]{Buc}.
In particular, this property implies that all spheres
have zero $\mu$-measure.
 
We now define two sets in $X_\8$.  Let $(E)_{\infty}$ be the collection
of all points $z\in X_{\infty}$ for which
\[ 
\lim_{r \to 0} \frac{\mu_{\infty}^E (B_{X_\infty}(z, r))}{\mu_{\infty} (B_{X_\infty}(z,r))} = 1,
\]
and let $(E^c)_{\infty}$ be the analogous collection of all points $z\in X_{\infty}$ for which
\[ 
\lim_{r \to 0} \frac{\mu_{\infty}^{E^c} (B_{X_\infty}(z, r))}{\mu_{\infty} (B_{X_\infty}(z,r))} = 1.
\]

\begin{lemma}\label{lem:consequences of annular decay}
	If $z\in X_{\infty}$ and $z_n\in X_n$ with $z_n\to z$ (or, more precisely, $ \iota_n(z_n) \to \iota(z_\8)$ in $Z$),
	then for every $r>0$,
	\[
	\mu_{\infty} (B_{X_{\infty}}(z,r)) = \lim_{n \to \infty} 
	\mu_n(B_n(z_n,r)).
	\]
	The analogous result holds for the measures
	$\mu_n(\cdot \cap E)$ and $\mu_{\infty}^E$, and for
	$\mu_n(\cdot \cap E^c)$ and $\mu_{\infty}^{E^c}$.
\end{lemma}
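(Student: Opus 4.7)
The plan is to combine the standard Portmanteau characterization of weak* convergence with a sandwiching of balls through the ambient proper space $Z$, and close the argument using the annular decay property inherited by the tangent space.

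Since $\iota_n(z_n) \to \iota(z)$ in $Z$, for any $\eta > 0$ and all sufficiently large $n$ the triangle inequality gives
\[
\iota_n^{-1}\bigl(B_Z(\iota(z), r-\eta)\bigr) \subset B_n(z_n, r) \subset \iota_n^{-1}\bigl(B_Z(\iota(z), r+\eta)\bigr).
\]
Since $\iota_{n,*}\mu_n \overset{*}\rightharpoonup \iota_*\mu_\infty$ on the proper space $Z$, the Portmanteau-type inequalities for weak* convergence of Radon measures on a locally compact space, applied to the open ball $B_Z(\iota(z), r-\eta)$ and the compact set $\overline{B_Z(\iota(z), r+\eta)}$, yield
\[
\mu_\infty\bigl(B_{X_\infty}(z, r-\eta)\bigr) \le \liminf_{n\to\infty} \mu_n(B_n(z_n, r)) \le \limsup_{n\to\infty} \mu_n(B_n(z_n, r)) \le \mu_\infty\bigl(\{y : d_\infty(y,z) \le r+\eta\}\bigr).
\]

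Now I would let $\eta \to 0^+$. The key input is that $X_\infty$ is geodesic and $\mu_\infty$ is doubling (both properties transfer from $X$ to its tangent spaces with the same quantitative constants, as discussed after Definition~\ref{defn:tangent-space}), so the annular decay property~\eqref{eq:annular decay property} holds on $(X_\infty,\mu_\infty)$. In particular the sphere $\{y : d_\infty(y,z)=r\}$ has $\mu_\infty$-measure zero, so continuity of $\mu_\infty$ along the nested families of balls forces both the lim inf and the lim sup to equal $\mu_\infty(B_{X_\infty}(z,r))$, giving the identity.

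For the analogous statements with $\mu_n(\cdot \cap E)$ and $\mu_n(\cdot \cap E^c)$, the identical sandwich argument applies with the weak* limits $\mu_\infty^E$ and $\mu_\infty^{E^c}$. These limit measures are dominated by $\mu_\infty$ (indeed $\mu_\infty^E + \mu_\infty^{E^c} = \mu_\infty$), so they also vanish on spheres in $X_\infty$, and the limit passage goes through verbatim. The only real subtlety is invoking the correct form of the Portmanteau inequalities for Radon (not probability) measures on the proper space $Z$; this is standard and follows by approximating characteristic functions of open sets from below and of compact sets from above by compactly supported continuous functions, so no substantial obstacle is expected.
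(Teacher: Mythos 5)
Your proof is correct and takes essentially the same approach as the paper: both sandwich the ball $B_n(z_n,r)$ between open and closed balls around $\iota(z)$ in the ambient space $Z$, apply the Portmanteau (semicontinuity) inequalities for weak* convergence, and then use the annular decay property of the tangent space to discard the sphere of radius $r$ when letting $\eta\to 0$. The only cosmetic difference is that you present the $\liminf$ and $\limsup$ bounds in a single combined sandwich, while the paper treats them in two separate steps.
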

\begin{proof}
	We prove the result for the measures
	$\mu_n(\cdot \cap E)$ and $\mu_{\infty}^E$; the proofs for the
	other two pairs are analogous.
	Fix $\eta>0$.
	By the lower semicontinuity of measure in open sets under weak* convergence (see e.g. \cite[Proposition 1.62]{AFP}),
	\[
	\mu_{\infty}^E (B_{X_{\infty}} (z,r-\eta))
	=\mu_{\infty}^E (B_{Z} (z,r-\eta))
	\leq \liminf_{n \to \infty} \left[ \iota_{n,*} \mu_n(E \cap \cdot) \right] (B_{Z} (z, r-\eta)). 
	\]
	Here $B_{Z} (z, r-\eta)$ is the ball in $Z$ whose center is the image of $z$ under the isometric embedding $\iota$.
	Letting $\epsilon_n:=d_{Z}(z_n,z)$,
	we have $\epsilon_n \to 0$ and for large $n$,
	\[ 
	B_{Z} (z, r-\eta) \subset B_{Z}(z_n, r-\eta+\epsilon_n)\subset B_{Z}(z_n, r),
	\]
	where again we  label the image of $z_n$ under the
	isometric embedding $\iota_n$ also by $z_n$. Now we can conclude
	\[
	\mu_{\infty}^E (B_{X_{\infty}} (z,r-\eta)) \leq
	\liminf_{n \to \infty} \mu_n(B_n (z_n, r) \cap E). 
	\]
	Thus letting $\eta\to 0$,
	\begin{equation}\label{eq:liminf for muEinfty}
	\mu_{\infty}^E (B_{X_{\infty}} (z,r)) \leq
	\liminf_{n \to \infty} \mu_n(B_n (z_n, r) \cap E). 
	\end{equation}
	
	Again fix $\eta>0$.
	By upper semicontinuity of measure in compact sets under weak* convergence,
	\[
	\mu_{\infty}^E (B_{X_{\infty}}(z,r+2\eta)) \geq 
	\mu_{\infty}^E (\overline{B_{X_{\infty}}(z,r+\eta)}) \geq 
	\limsup_{n \to \infty} \left[ \iota_{n,*} \mu_n(E \cap \cdot) \right] ( \overline{B_{Z}(z, r+\eta)} ).
	\]
 	Again for large $n$,
	\[ \overline{B_{Z}(z, r + \eta)} ) \supset B_{Z} (z_n, r+\eta - \epsilon_n)
	\supset B_{Z} (z_n, r), \]
	and we conclude
	\[
	\mu_{\infty}^E (B_{X_{\infty}}(z,r+2\eta)) \geq
	\limsup_{n \to \infty} \mu_n(B_n (z_n, r) \cap E).
	\]
	Note that the measure $\mu_{\infty}$ also satisfies the annular
	decay property, and that $\mu^E_{\infty}\le \mu_{\infty}$, so spheres in $X_\8$ do not carry
	positive $\mu^E_\8$-weight; therefore letting $\eta\to 0$, we get
	\[
	\mu_{\infty}^E (B_{X_{\infty}}(z,r)) \geq
	\limsup_{n \to \infty} \mu_n(B_n (z_n, r) \cap E).
	\]
	Combining this with \eqref{eq:liminf for muEinfty}
	completes the proof.
\end{proof}

\begin{proposition}  \label{thm:Einfinitydisjoint}
The sets $(E)_{\infty}$ and $(E^c)_{\infty}$ are disjoint.
Moreover, \[\mu_\infty(X_\infty\setminus [(E)_{\infty}\cup(E^c)_{\infty}])=0.\]
\end{proposition}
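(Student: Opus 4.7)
The disjointness of $(E)_\infty$ and $(E^c)_\infty$ is essentially immediate. Since the two limit measures satisfy $\mu_\infty^E + \mu_\infty^{E^c} = \mu_\infty$ (noted just before the statement), for every $z\in X_\infty$ with $\mu_\infty(B(z,r))>0$ one has
\[
\frac{\mu_\infty^E(B(z,r))}{\mu_\infty(B(z,r))} + \frac{\mu_\infty^{E^c}(B(z,r))}{\mu_\infty(B(z,r))} = 1 \qquad \text{for every } r>0,
\]
so both density ratios cannot simultaneously tend to $1$. Doubling of $\mu_\infty$ together with $\mu_\infty(B(x_\infty,1))=1$ guarantees $\mu_\infty(B(z,r))>0$ whenever $z$ lies in the support of $\mu_\infty$, and points outside the support are irrelevant to the second claim.

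For the measure-zero statement, the plan is to propagate the relative isoperimetric inequality \eqref{eq:relative isoperimetric inequality} from the scaled spaces $X_n$ to the tangent space $X_\infty$. Given $z\in X_\infty$, pick $z_n\in X_n$ with $z_n\to z$ (cf.\ Remark~\ref{rmk:control-eps}). Applying \eqref{eq:relative isoperimetric inequality} on $X_n$ gives
\[
\min\{\mu_n(B_n(z_n,r)\cap E),\,\mu_n(B_n(z_n,r)\setminus E)\}\le 2C_P r\, P_n(E,B_n(z_n,r)).
\]
Lemma~\ref{lem:consequences of annular decay} lets the left-hand side pass to $\min\{\mu_\infty^E(B(z,r)),\,\mu_\infty^{E^c}(B(z,r))\}$. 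For the right-hand side, the weak* convergence $P_n(E,\cdot)\overset{*}{\rightharpoonup}\pi_\infty$ combined with upper semicontinuity on compact sets (applied to the slightly enlarged closed ball $\overline{B(z,r+\eta)}$ and then letting $\eta\to 0^+$, which is legitimate since $\pi_\infty$ has locally finite mass by \eqref{eq:bdd-perim-scaled}) yields $\limsup_n P_n(E,B_n(z_n,r))\le \pi_\infty(\overline{B(z,r)})$. We thus obtain
\[
\min\{\mu_\infty^E(B(z,r)),\,\mu_\infty^{E^c}(B(z,r))\}\le 2C_P r\, \pi_\infty(\overline{B(z,r)}) \qquad \text{for every } z\in X_\infty,\ r>0.
\]

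Dividing by $\mu_\infty(B(z,r))$ and letting $r\to 0$, I would invoke the Lebesgue--Radon--Nikodym differentiation theorem in the doubling metric measure space $(X_\infty,\mu_\infty)$: for $\mu_\infty$-a.e.\ $z$, the three limits
\[
\rho(z):=\lim_{r\to 0}\frac{\mu_\infty^E(B(z,r))}{\mu_\infty(B(z,r))},\quad 1-\rho(z)=\lim_{r\to 0}\frac{\mu_\infty^{E^c}(B(z,r))}{\mu_\infty(B(z,r))},\quad \lim_{r\to 0}\frac{\pi_\infty(B(z,r))}{\mu_\infty(B(z,r))}
\]
exist and are finite (the first two sum to $1$ by $\mu_\infty^E+\mu_\infty^{E^c}=\mu_\infty$). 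Doubling replaces $\overline{B(z,r)}$ by $B(z,2r)$ up to a bounded factor, so $r\,\pi_\infty(\overline{B(z,r)})/\mu_\infty(B(z,r))\to 0$ at such points. The displayed inequality therefore forces $\min(\rho(z),1-\rho(z))=0$, i.e.\ $\rho(z)\in\{0,1\}$, and accordingly $z\in(E)_\infty\cup (E^c)_\infty$.

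The main technical subtlety lies in the RHS limit passage: one must carefully use that weak* convergence gives upper bounds on closed rather than open sets, which forces the $\eta$-enlargement and an appeal to the local uniform bound \eqref{eq:bdd-perim-scaled} to close the gap. Once this is in place, the rest is a fairly routine differentiation argument in the doubling space $(X_\infty,\mu_\infty)$.
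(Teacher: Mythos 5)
Your proof is correct and, for the measure-zero statement, takes a genuinely different and arguably cleaner route than the paper's. For disjointness, both you and the paper ultimately rely on the fact that the two density ratios sum to $1$; you work directly with $\mu_\infty^E + \mu_\infty^{E^c} = \mu_\infty$ in $X_\infty$, while the paper pulls the densities back to $X_n$ via Lemma~\ref{lem:consequences of annular decay} and adds them at the level of the approximating sequence, reaching the same contradiction. For the second claim, the paper argues by contradiction: if the Radon--Nikodym derivative $\varphi$ of $\mu_\infty^E$ with respect to $\mu_\infty$ were strictly between $0$ and $1$ on a set of positive measure, it covers that set by a disjoint Vitali family of small balls in $X_\infty$, lifts them to $X_n$, applies the relative isoperimetric inequality \eqref{eq:relative isoperimetric inequality} to each lifted ball, sums, and contradicts the density bound \eqref{eq:density of perimeter} at the base point as the covering scale shrinks. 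You instead pass the relative isoperimetric inequality directly to the tangent space: Lemma~\ref{lem:consequences of annular decay} handles the left-hand side exactly, and upper semicontinuity on closed sets under weak* convergence of $P_n(E,\cdot)$ to $\pi_\infty$ (with the $\eta$-enlargement and the local mass bound \eqref{eq:bdd-perim-scaled}) handles the right, giving
\[
\min\bigl\{\mu_\infty^E(B(z,r)),\,\mu_\infty^{E^c}(B(z,r))\bigr\} \le 2C_P\, r\,\pi_\infty\bigl(\overline{B(z,r)}\bigr)
\]
for every $z\in X_\infty$ and $r>0$. The Lebesgue--Besicovitch differentiation theorem in the doubling space $(X_\infty,\mu_\infty)$ then forces $\varphi\in\{0,1\}$ a.e., with no Vitali covering or contradiction scaffolding needed. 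Both routes use the same hypothesis on the base point --- that Lemma~\ref{lem:perim-growth-bounds} holds at $x$, which supplies the local mass bound for $\pi_\infty$, resp.\ the uniform perimeter bound at scale $r_n$ --- so neither is more restrictive; your version has the added advantage of working with $\pi_\infty$ itself, which dovetails naturally with the rest of Section~5.
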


\begin{proof}
Suppose $z \in (E)_{\infty} \cap (E^c)_\infty$.  Then
\[ 
\lim_{r \to 0} \frac{\mu_{\infty}^E (B_{X_{\infty}}(z, r))}{\mu_{\infty}(B_{X_{\infty}}(z,r))} 
= 1=\lim_{r \to 0} \frac{\mu_{\infty}^{E^c} (B_{X_{\infty}}(z, r))}{\mu_{\infty}(B_{X_{\infty}}(z,r))}.
\]
By the Gromov-Hausdorff convergence, there is a sequence $x_n \in X_n$ 
with $x_n\to z$.
Thus for any small enough $r>0$,
Lemma \ref{lem:consequences of annular decay} gives
\[
2/3 < \frac{\mu_{\infty}^E(B_{X_{\infty}}(z,r))}{\mu_{\infty}(B_{X_{\infty}}(z,r))} 
= \lim_{n \to \infty} \frac{ \mu_n(B_n(x_n, r) \cap E)}{\mu_n( B_n(x_n, r))}
=\lim_{n \to \infty} \frac{ \mu(B(x_n, r r_n) \cap E)}{\mu( B(x_n, r r_n))}
\]
and similarly
\[
2/3 < \frac{\mu_{\infty}^{E^c}(B_{X_{\infty}}(z,r))}{\mu_{\infty}(B_{X_{\infty}}(z,r))} 
= \lim_{n \to \infty} \frac{ \mu(B(x_n, r r_n) \cap E^c)}{\mu( B(x_n, r r_n))}. 
\]
Adding together, we find for all small enough $r>0$ and large enough $n$
\[ 
4/3 < \frac{ \mu(B(x_n, r r_n))}{\mu(B(x_n, r r_n))},
\]
which is not possible. Therefore
$(E)_\infty$ and $(E^c)_\infty$ are disjoint. 

Next, we show that $\mu_\infty(X_\infty\setminus[(E)_\infty\cup(E^c)_\infty])=0$. To this end, we will show that
the Radon-Nikodym derivative of $\mu_\infty^E$ with respect to $\mu_\infty$ is $\mu_\infty$-a.e.~either $1$ or $0$. Let this
Radon-Nikodym derivative be denoted by $\pip$.
Let $A_0:=\{z\in X_\infty :\, 0<\pip(z)<1\}$, and suppose that $\mu_\infty(A_0)>0$.
Then there is some $R>0$ and $0<\delta<1$ such that the set
\[
A:=\{z\in B_{X_\infty}(x_\infty, R) :\, \delta<\pip(z)
<1-\delta\text{ and }z\text{ is a Lebesgue point of }\pip\}
\]
satisfies
\[
\mu_\infty(A)> \delta \mu_\infty(B_{X_\infty}(x_\infty,R)).
\]
We fix $0<\eps<R$ and consider the family of balls $B_{X_\infty}(z,\rho)$, $z\in A$ and $0<\rho<\eps$, such that
\begin{equation}\label{eq:choice of balls covering A}
\delta<\frac{1}{\mu_\infty(B_{X_\infty}(z,\rho))}\int_{B_{X_\infty}(z,\rho)}\pip\, d\mu_\infty<1-\delta.
\end{equation}
As every $z\in A$ is a Lebesgue point of $\pip$, the
corresponding family of closed balls is a
fine cover of $A$, and hence there is a 
pairwise disjoint subfamily $\{\overline{B_i}\}_{i=1}^{\infty}$
such that $\mu_\infty(A\setminus\bigcup_{i=1}^{\infty} \overline{B_i})=0$
and then also $\mu_\infty(A\setminus\bigcup_{i=1}^{\infty} B_i)=0$,
since spheres have $\mu_{\infty}$-measure zero.
Now, observe that
\[
\delta \mu_\infty(B_{X_\infty}(x_\infty,R))< \mu_\infty(A)\le \sum_{i=1}^\infty\mu_\infty(B_i),
\]
and so we can find $N\in\N$ such that 
\begin{equation}\label{eq:cover-measurebdd}
\delta \mu_\infty(B_{X_\infty}(x_\infty,R))< \sum_{i=1}^N\mu_\infty(B_i).
\end{equation}
Denote the center of each $B_i$ by $x^i$.
By Lemma \ref{lem:consequences of annular decay}
we can find $j_\eps\in\N$ such that
whenever $n\ge j_\eps$, there are points $x_n^1,\cdots, x_n^N\in X=X_n$ converging to $x^1, \dots, x^n$
respectively, and a number $\eta>0$ such that
\[
\mu_n(B_n(x_n^i,\rad B_i))
\le (1+\delta^2)\mu_\infty(B_i).
\]
for all $i=1,\ldots,N$.
This gives the second inequality of \eqref{eq:covering} below;
analogously we obtain for all $i=1,\ldots,N$
\begin{align}
(1-\delta^2)\mu_\infty(B_{X_\infty}(x_\infty,R))
  &\le \mu_n(B_n(x,R))\le (1+\delta^2)\mu_\infty(B_{X_\infty}(x_\infty,R)),\label{eq:orignBall}\\
(1-\delta^2)\mu_\infty(B_i)&\le \mu_n(B_n(x_n^i,\rad B_i))\le (1+\delta^2)\mu_\infty(B_i),\label{eq:covering}\\
\delta \frac{1-\delta^2}{1+\delta^2}\le \frac{1-\delta^2}{1+\delta^2}\frac{1}{\mu_\infty(B_i)}\int_{B_i}\pip\, d\mu_\infty
  &\le \frac{\mu_n(B_n(x_n^i,\rad B_i)\cap E)}{\mu_n(B_n(x_n^i,\rad B_i))}\notag\\
    &\qquad\qquad \le \frac{1+\delta^2}{1-\delta^2} \frac{1}{\mu_\infty(B_i)}\int_{B_i}\pip\, d\mu_\infty
     \le \frac{1+\delta^2}{1+\delta}\le 1-\delta/2;\label{eq:density}
\end{align}
here, to obtain~\eqref{eq:density} we also used~\eqref{eq:choice of balls covering A}.
We can also ensure that the collection of balls (``lifts" of $B_i$ to $X_n$)
$\{B(x_n^i,r_n\rad B_i)\}_{i=1}^N$ are pairwise 
disjoint.
Inequality~\eqref{eq:density} tells us that if $\delta>0$ was chosen
small enough, then
\[
\delta/2\le \frac{\mu(B(x_n^i,r_n\rad B_i)\cap E)}
{\mu(B(x_n^i,r_n\rad B_i))}
\le 1-\delta/2.
\]
Now, applying the relative isoperimetric inequality
\eqref{eq:relative isoperimetric inequality} to these balls gives
\[
\delta/2\le 2C_P \frac{r_n\rad B_i}{\mu(B(x_n^i,r_n\rad B_i))}
P(E, B(x_n^i,r_n\rad B_i)).
\]
Thus we obtain, recalling that $\rad B_i<\eps<R$,
\[
\delta\sum_{i=1}^N\mu(B(x_n^i,r_n\rad B_i))
   \le 4C_P\eps r_n \sum_{i=1}^N P(E, B(x_n^i,r_n\rad B_i))
  \le 4C_P\eps r_n P(E, B(x,2r_n R)).
\]
By~\eqref{eq:cover-measurebdd} and~\eqref{eq:covering}, we now have
\begin{align*}
\delta^2(1-\delta^2)\mu_\infty(B_{X_\infty}(x_\infty,R))
&\le \delta(1-\delta^2)\sum_{i=1}^N\mu_\infty(B_i)\\
&\le \delta \sum_{i=1}^N \frac{\mu(B_n(x_n^i,r_n\rad B_i))}{\mu(B(x,r_n))}\\
&\le \frac{4C_P\eps r_n}{\mu(B(x,r_n))} P(E, B(x,2r_n R)).
\end{align*}
Applying~\eqref{eq:orignBall} now gives
\[
\frac{\delta^2(1-\delta^2)}{1+\delta^2} \frac{\mu(B(x,r_n R))}{\mu(B(x,r_n))}
\le \frac{4C_P \eps r_n}{\mu(B(x,r_n))} P(E, B(x,2r_n R)).
\]
By the doubling property of $\mu$ we obtain
\[
0<\frac{\delta^2(1-\delta^2)}{1+\delta^2} \frac{1}{4C_P C_d \eps}\le \frac{r_n}{\mu(B(x,2r_n R))}\, P(E, B(x,2r_n R)).
\]
Now letting $n\to\infty$, by \eqref{eq:density of perimeter} we get
\[
0<\frac{\delta^2(1-\delta^2)}{1+\delta^2} \frac{1}{4C_P C_d \eps}\le C_d
\]
for every $0<\eps<1$, which is not possible. Thus $\mu(A_0)=0$.

Now the claim
$\mu_\infty(X_\infty\setminus[(E)_\infty\cup(E^c)_\infty])=0$
follows from the fact that $\mu_\infty^E+\mu_\infty^{E^c}=\mu_\infty$.
\end{proof}

Note that by the above proposition and by the
Radon-Nikodym theorem, we now have
\[
\mu_\infty^E(A)=\mu_\infty((E)_\infty\cap A)
\quad\textrm{and}\quad \mu_\infty^{E^c}(A)=\mu_\infty((E^c)_\infty\cap A)
\]
for Borel sets $A\subset X_\8$,
and moreover $\partial^*(E)_\infty=X_{\infty}\setminus ((E)_\infty\cup(E^c)_\infty)$.

Now we wish to study the support of the asymptotic perimeter measure
$\pi_{\infty}$ in $(X_{\infty}, d_{\infty}, x_{\infty})$.
We first prove a proposition that 
states that if a point $z \in X_{\infty}$ is in the support of $\pi_{\infty}$,
then it can be seen as the limit of special 
points in $\partial^*E$. 
Recall from \eqref{eq:def of Sigma gamma} that
\[ 
\Sigma_\gamma E:= \left\{ y \in X:\, \liminf_{r \to 0} \min \left\{ \frac{\mu (B(y,r)\cap E)}{\mu(B(y,r))} , 
\frac{\mu(B(y,r)\cap E^c)}{\mu(B(y,r))} \right\} \geq \gamma \right\} \subset \partial^* E 
\]
for some $\gamma=\gamma(C_d,C_P)\in (0,1/2]$.
By \eqref{eq:def of theta} and \eqref{eq:mthbdry minus Sigmagamma} we know that $P(E,\cdot)$ is concentrated on $\Sigma_\gamma E$.

For each $m\in\N$, let
\[ 
G_m :=\left\{ z \in \Sigma_\gamma E:\,   \frac{\gamma}{2 C_P}\le 
      r\frac{ P(E,B(z,r))}{\mu(B(z,r))} \le 2C_d  \quad\text{for all }0<r<\frac1m\right\} .
\]
By Lemma~\ref{lem:perim-growth-bounds} we know that 
\[
  \mathcal{H}\left(\partial^*E\setminus \bigcup_{m\in\N}G_m\right)=0,
\]
and $G_m\subset G_{m+1}$ for all $m\in\N$.
Note that for every $r>0$ the map $z\mapsto P(E, B(z,r))$ is lower semicontinuous,
and so $G_m$ is a Borel set.
Combining the definitions of $\Sigma_{\gamma}E$ and $G_m$,
for every $z\in \Sigma_{\gamma}E\cap G_m$ we find $r_z>0$ such that
\begin{equation}\label{perimetercomp}
r P(E,B(z,r))\le K \min\{\mu(B(z,r)\cap E), \mu(B(z,r)\cap E^c)\}
\end{equation}
for all $0< r \le r_z$, where $K=K(C_d,C_P)$.
Hence we can refine $G_m$ further by considering the set
\[
G_m^*:=\left\{z\in G_m :\, \text{\eqref{perimetercomp} holds for all }0<r<\frac{1}{m}\right\}.
\]
Note then that $G_m^*$ is a Borel set, and that 
$G_m^*\subset G_{m+1}^*$ for $m\in\N$ with 
\[
\mathcal{H}\left(\partial^*E\setminus \bigcup_{m\in\N}G_m^*\right)=0.
\]
%
As $P(E,\cdot)$ is asymptotically doubling by \eqref{eq:density of perimeter},
we know that 
the Lebesgue differentiation theorem holds for the measure $P(E,\cdot)$. Hence 
for any fixed $m\in\N$, 
by the Lebesgue differentiation theorem, $G_m^*$ is of density $1$ (with respect to the measure $P(E,\cdot)$) at
$P(E,\cdot)$-a.e. $x\in G_m^*$. 
It is at such a point that we will zoom in and take our limiting measures.

\begin{proposition} \label{prop:perimeterinfinitylb}
Let $(X_n, d_n, x, \mu_n)$ be a pointed measured Gromov-Hausdorff convergent sequence
such that the base point $x$
is a point of $P(E,\cdot)$-density $1$ for $G_m^*$ for some $m\in\N$.
Suppose that $z \in X_\infty$ is such that
\[ \pi_\infty ( B_{X_\infty}(z,R) ) > 0 \]
for all $R>0$. Then there is a sequence
$z_n \in X_n$ that converges
to $z$ (in $Z$) such that each $z_n$ is in $G_m^*$.
\end{proposition}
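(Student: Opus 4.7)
The plan combines two ingredients: the mass concentration of $\pi_\infty$ near $z$, and the asymptotic concentration of $P_n(E,\cdot)$ on $G_m^*$ near $x$. When paired against each other through weak* convergence and a bump function, they will conflict unless some sequence in $G_m^*$ reaches $z$, and a diagonal selection upgrades the conclusion to a genuine sequence.

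Step one is a scaled density estimate: for every $R>0$,
\[
\lim_{n\to\infty} P_n(E, B_n(x,R)\setminus G_m^*) = 0.
\]
The scaling factors $r_n/\mu(B(x,r_n))$ in the definition of $P_n$ cancel in the quotient $P_n(E,B_n(x,R)\setminus G_m^*)/P_n(E,B_n(x,R))$, turning it into $P(E,B(x,r_nR)\setminus G_m^*)/P(E,B(x,r_nR))$, which tends to $0$ because $x$ is a $P(E,\cdot)$-density-one point of $G_m^*$ and $r_nR\to 0$. The denominator is bounded above by $C_d^{1+\lceil\log_2 R\rceil}$ by \eqref{eq:bdd-perim-scaled}, so the product tends to $0$ as well.

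Next, suppose for contradiction that no sequence $z_n\in G_m^*$ converges to $z$ in $Z$. Then some $\delta>0$ and some infinite set of indices $n$ satisfy $B_Z(\iota(z),\delta)\cap \iota_n(G_m^*)=\emptyset$; otherwise one could produce approximating points $z_n^{(k)}\in G_m^*$ within $Z$-distance $1/k$ of $\iota(z)$ for all large $n$ and diagonalize. Pick a continuous $\psi\colon Z\to[0,1]$ equal to $1$ on $B_Z(\iota(z),\delta/4)$ and supported in $B_Z(\iota(z),\delta/2)$. Along the bad subsequence the support of $\psi\circ\iota_n$ lies in $\iota_n^{-1}(B_Z(\iota(z),\delta))$, is disjoint from $G_m^*$, and (using $\iota_n(x)=\iota(x_\infty)$ together with the triangle inequality in $Z$) is contained in $B_n(x,R_0)$ with $R_0:=d_\infty(x_\infty,z)+\delta$. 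Hence
\[
\int_{X_n}\psi\circ\iota_n \, dP_n(E,\cdot)\le P_n(E, B_n(x,R_0)\setminus G_m^*)\longrightarrow 0
\]
by the scaled density estimate above. On the other hand, the weak* convergence $\iota_{n,*}P_n(E,\cdot)\overset{*}{\rightharpoonup}\iota_*\pi_\infty$ in the proper space $Z$, paired with the fact that $\psi$ is continuous with compact support, gives
\[
\int_{X_n}\psi\circ\iota_n\, dP_n(E,\cdot)\longrightarrow \int_Z \psi\, d(\iota_*\pi_\infty)\ge \pi_\infty(B_{X_\infty}(z,\delta/4))>0,
\]
a contradiction. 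The same reasoning produces, for every $k\in\N$, an approximating $z_n^{(k)}\in G_m^*$ with $d_Z(\iota_n(z_n^{(k)}),\iota(z))<1/k$ for all sufficiently large $n$, and a standard diagonal selection delivers the desired sequence $z_n\in G_m^*$ with $z_n\to z$.

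The main obstacle is transferring the $P(E,\cdot)$-density-one condition, which is phrased at the point $x$, into a quantitative $P_n(E,\cdot)$-emptiness statement in a neighborhood of $z$. This is handled by the geometric observation that a $Z$-ball of any fixed radius around $\iota(z)$ pulls back under $\iota_n$ into an $n$-independent $d_n$-ball around $x$ whose radius depends only on that fixed radius and on $d_\infty(x_\infty,z)$.
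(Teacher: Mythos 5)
Your proof is correct and takes essentially the same route as the paper: both derive a contradiction by pairing the $P(E,\cdot)$-density-one condition for $G_m^*$ at $x$ against the positivity of $\pi_\infty$ near $z$, then diagonalize over shrinking scales. The only implementation difference is that you test weak* convergence against a bump function and isolate a clean scaled density estimate $P_n(E,B_n(x,R)\setminus G_m^*)\to 0$, whereas the paper uses lower semicontinuity of measure on open sets and a direct additive decomposition of the perimeter of a larger ball; both are valid and equivalent in substance.
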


\begin{proof}
Fix $R>0$.
By the Gromov-Hausdorff convergence there exist sequences
$\epsilon_n\to 0$ and $x_n\in X_n=X$ such that
$d_{Z}(x_n,z)<\epsilon_n$, and then
by lower semicontinuity under weak* convergence,
\begin{equation} \label{assump}
\liminf_{n \to \infty} 
r_n\frac{P(E, B(x_n, (R+\epsilon_n)r_n))}{\mu(B(x, r_n))}
\ge \liminf_{n \to \infty} 
r_n\frac{P(E, \iota_n^{-1}(B_{Z}(z, R)))}{\mu(B(x, r_n))} \ge \pi_\infty (B_{X_\infty}(z,R) )>0.
\end{equation}
We would like to
know that there exists $\widetilde{x}_n\in G_m^* \cap B(x_n, (R+\epsilon_n)r_n)$
for all sufficiently large $n\in\N$.  Suppose that this is 
not the case.  Then there is a subsequence $n_k$ such that
$B(x_{n_k}, (R+\epsilon_{n_k})r_{n_k})$ is disjoint from $G_m^*$.  
Choose $M>0$ large enough so that for all $k\in\N$,
\[ B(x_{n_k}, (R+\epsilon_{n_k})r_{n_k}) \subset B(x, M r_{n_k}). \]
As $B(x_{n_k}, (R+\epsilon_{n_k})r_{n_k})$ and $B(x, M r_{n_k}) \cap G_m^*$ are disjoint, 
we have
\[ 
P(E, B(x_{n_k}, (R+\epsilon_{n_k})r_{n_k})) + P(E, B(x, M r_{n_k}) \cap G_m^*)  \leq P(E, B(x, M r_{n_k})),
\]
which is equivalent to
\[
r_{n_k}\frac{  P(E, B(x_{n_k}, (R+\epsilon_{n_k})r_{n_k})) }{\mu(B(x, r_{n_k}))} 
   + r_{n_k}\frac{ P(E, B(x, M r_{n_k}) \cap G_m^*) }{\mu(B(x, r_{n_k}))}
   \leq r_{n_k}\frac{ P(E, B(x, M r_{n_k})) }{\mu(B(x, r_{n_k}))}.
\]
Call the left-hand side of this inequality $A_k + B_k$, and the right-hand side $C_k$.  
The assumption that $x$ is a point of density $1$ in $G_m^*$ implies that $B_k / C_k \to 1$ as $k \to \infty$.  
Thus $A_k/C_k\to 0$ as $k\to\infty$.
On the other hand, by the definition of $G_m$, we must have $C_k\le 2M^{-1}C_d^{2+\log_2(M)}<\infty$
for large $k\in\N$.
Therefore
$A_k\to 0$ as $k \to \infty$, which contradicts~\eqref{assump}.
Thus, there is some $N_1\in\N$ such that there is a point
$\widetilde{x}_n\in G_m^*\cap B(x_n, (R+\epsilon_n)r_n)$ for 
all $n\ge N_1$.
We rename this sequence $\widetilde{x}_n^1$.  Similarly, there exists 
a sequence
\[ \widetilde{x}_n^2 \in G_m^*\cap B(x_n, (2^{-1}R+\epsilon_n)r_n) \]
for all $n \geq N_2 > N_1$.  We continue inductively in this fashion to find for each $k\in\N$, 
\[ \widetilde{x}_n^k \in G_m^*\cap B(x_n, (2^{-k}R+ \epsilon_n)r_n) \]
for all $n \geq N_k >  N_{k-1}$.  Now
\[
 d_{Z} (\widetilde{x}_n^k, z)  \leq  d_{Z} (\widetilde{x}_n^k, x_n) + d_{Z} (x_n, z)
                                             \leq  2^{-k} R + \epsilon_n + \epsilon_n.
\]                         
For $n \in \left[N_{k}, N_{k+1}\right)$, set $z_n  := \widetilde{x}_n^k$.
Then $z_n$ has the desired properties.
\end{proof}

Note also that the support of $\pi_{\infty}$
is contained in $X_{\infty}$; this can be seen as follows.
If $z\in Z$ such that
$\pi_\infty (B_{Z}(z,R) )>0$ for all $R>0$,
then there exists a sequence $x_n\in X_n$ such that $x_n \to z$ in $Z$.
It follows that $z\in X_\8$.

We now provide growth estimates for the measure $\pi_\infty$. 

\begin{theorem}\label{thm:Main2}
Consider the sequence $(X_n, d_n, x, \mu_n)$
such that  $x$ is
a point of $P(E,\cdot)$-density $1$ for $G_m^*$ for some $m\in\N$.
Suppose that $z \in X_\infty$ is such that
\[ 
\pi_\infty (B_{X_\infty}(z,R)) > 0
\]
for all $R>0$.  Then 
\begin{equation}\label{eq:T1}
\frac{1}{C} \frac{\mu_\infty(B_{X_\infty}(z,r))}{r}
\le \pi_\infty(B_{X_\infty}(z,r))
\le C \frac{\mu_\infty(B_{X_\infty}(z,r))}{r}
\quad\textrm{for all }r>0,
\end{equation}
where $C=C(C_d,C_P)$,
and
\begin{equation}\label{eq:T2}
\pi_\infty((E)_\infty\cup(E^c)_\infty)=0.
\end{equation}
\end{theorem}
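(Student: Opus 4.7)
The plan is to use Proposition~\ref{prop:perimeterinfinitylb} to produce a sequence $z_n\in G_m^*$ with $z_n\to z$ in $Z$, and then to transfer the pointwise bounds encoded by membership in $G_m^*$---namely the two-sided perimeter density bounds from $G_m$ and the isoperimetric refinement \eqref{perimetercomp}---to the limit measures by combining Lemma~\ref{lem:consequences of annular decay} with the standard weak* semicontinuity inequalities.

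For \eqref{eq:T1}, membership of $z_n$ in $G_m$ at scale $r_nr$ (valid once $r_nr<1/m$) rescales to
\[
\frac{\gamma}{2C_P\,r}\,\mu_n(B_n(z_n,r))\le P_n(E,B_n(z_n,r))\le \frac{2C_d}{r}\,\mu_n(B_n(z_n,r)),
\]
and Lemma~\ref{lem:consequences of annular decay} identifies $\lim_n\mu_n(B_n(z_n,\cdot))$ with $\mu_\infty(B_{X_\infty}(z,\cdot))$. Open-set lower semicontinuity of $\pi_\infty$, combined with the inclusion $\iota_n^{-1}(B_Z(z,r))\subset B_n(z_n,r'')$ valid for any $r''>r$ and $n$ large, gives $\pi_\infty(B_{X_\infty}(z,r))\le (2C_d/r'')\,\mu_\infty(B_{X_\infty}(z,r''))$; the annular decay \eqref{eq:annular decay property} inherited by $\mu_\infty$ makes the passage $r''\to r^+$ lossless and produces the upper bound. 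Symmetrically, closed-set upper semicontinuity of $\pi_\infty$ applied on $\overline{B_Z(z,r)}$, together with $B_n(z_n,r')\subset \iota_n^{-1}(\overline{B_Z(z,r)})$ for any $r'<r$ and $n$ large, yields $\pi_\infty(\overline{B_{X_\infty}(z,r)})\ge (\gamma/(2C_P r'))\,\mu_\infty(B_{X_\infty}(z,r'))$. Applying this with $r$ replaced by $r/2$ and invoking the doubling of $\mu_\infty$ to compare $\mu_\infty(B(z,r/2))$ with $\mu_\infty(B(z,r))$ then gives the lower bound in \eqref{eq:T1}, with a constant depending only on $C_d,C_P$.

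For \eqref{eq:T2}, I would argue by contradiction that every $z\in (E)_\infty$ lies outside $\supp\pi_\infty$; the case $z\in(E^c)_\infty$ is symmetric. If $\pi_\infty(B_{X_\infty}(z,R))>0$ for every $R>0$, Proposition~\ref{prop:perimeterinfinitylb} again supplies $z_n\in G_m^*$ with $z_n\to z$. Combining \eqref{perimetercomp} with the lower perimeter growth from $G_m$ yields
\[
\frac{\gamma}{2C_P}\,\mu(B(z_n,r_nr))\le (r_nr)\,P(E,B(z_n,r_nr))\le K\,\mu(B(z_n,r_nr)\cap E^c);
\]
dividing by $\mu(B(x,r_n))$ and passing to the limit via Lemma~\ref{lem:consequences of annular decay} delivers
\[
\mu_\infty^{E^c}(B_{X_\infty}(z,r))\ge \frac{\gamma}{2C_P K}\,\mu_\infty(B_{X_\infty}(z,r))
\]
for every $r>0$, in flat contradiction with $z\in (E)_\infty$. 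Since $X_\infty\setminus\supp\pi_\infty$ is open and $\pi_\infty$-null, the resulting inclusion $(E)_\infty\cup(E^c)_\infty\subset X_\infty\setminus\supp\pi_\infty$ gives \eqref{eq:T2}.

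The main technical care will be the weak* passage for $\pi_\infty$: unlike the measures treated in Lemma~\ref{lem:consequences of annular decay}, $\pi_\infty$ is not a priori known to satisfy an annular decay property at $z$, and the balls $B_n(z_n,r)$ are not literally balls in the ambient $Z$. Sandwiching $B_n(z_n,r)$ between $\iota_n^{-1}(B_Z(z,r\pm\eta))$ via the Gromov-Hausdorff embeddings and then letting $\eta\to 0^+$---exactly the device used in Lemma~\ref{lem:consequences of annular decay}---handles both issues, since the $\mu_\infty$ side carries its own annular decay so that $r'\nearrow r$ and $r''\searrow r$ cost nothing there.
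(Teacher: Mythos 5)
Your argument is correct, and for \eqref{eq:T1} it follows the paper's proof essentially verbatim: produce $z_n\in G_m^*$ with $z_n\to z$ via Proposition~\ref{prop:perimeterinfinitylb}, sandwich $\pi_\infty(B_{X_\infty}(z,r))$ between $\liminf$ and $\limsup$ of $P_n(E,B_n(z_n,\cdot))$ over slightly inflated and deflated radii, invoke the two-sided $G_m$ density bounds at scale $r_nr$, and finish with Lemma~\ref{lem:consequences of annular decay}, the annular decay of $\mu_\infty$, and doubling.

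For \eqref{eq:T2} you take a genuinely different, and somewhat cleaner, route. The paper fixes $k\in\N$, picks $r_z$ with $\mu_\infty(B(z,r_z)\cap (E^c)_\infty)/\mu_\infty(B(z,r_z))<1/k$, uses~\eqref{perimetercomp} together with Lemma~\ref{lem:consequences of annular decay} to show $\pi_\infty(B(z,r_z))\le (K/k)\,\mu_\infty(B(z,r_z))/r_z$, and then contradicts the \emph{already-established} lower bound in \eqref{eq:T1} by choosing $k$ large. You instead combine the $G_m$ lower density bound with~\eqref{perimetercomp}, cancel the perimeter term, and pass to the limit to conclude $\mu_\infty^{E^c}(B(z,r))\ge \tfrac{\gamma}{2C_PK}\,\mu_\infty(B(z,r))$ at \emph{every} scale $r>0$, which contradicts the defining density-$1$ property of $(E)_\infty$ directly. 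Your route has the minor advantage of not routing through \eqref{eq:T1}, so the two conclusions of the theorem decouple; the paper's route makes the dependence of~\eqref{eq:T2} on the quantitative comparison~\eqref{eq:T1} explicit, which makes its role as a corollary of Ahlfors regularity for $\pi_\infty$ more transparent. Both land on the same open-covering / support argument to upgrade the pointwise statement ``$z\notin\supp\pi_\infty$'' to $\pi_\infty((E)_\infty\cup(E^c)_\infty)=0$.
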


\begin{proof}
By Proposition \ref{prop:perimeterinfinitylb}, there is a 
sequence $z_n\in G_m^*$ that converges to $z$ in $Z$.
Fix $r>0$ and $0<\eta<r/2$.
Using the basic properties of weak* convergence as in the proof of
Proposition \ref{prop:perimeterinfinitylb}, we find a sequence of positive
numbers $\epsilon_n$
with $\lim_{n\to\infty}\epsilon_n=0$ such that
\begin{equation}\label{bigineq}
\liminf_{n \to \infty} r_n \frac{P(E, B(z_n, (r + \epsilon_n) r_n))}{\mu (B(x, r_n))} \geq \pi_\infty (B_{X_\infty}(z,r))
 \geq  \limsup_{n \to \infty} r_n \frac{P(E, B(z_n, (r-\eta) r_n))}{\mu (B(x, r_n))}.
\end{equation}
We can rewrite the term on the right-most side of~\eqref{bigineq} as
\[ 
\limsup_{n \to \infty} (r - \eta) r_n \frac{ P(E, B(z_n, (r - \eta) r_n))}{\mu(B(z_n, (r-\eta) r_n))} 
  \cdot \frac{\mu(B(z_n, (r-\eta)r_n))}{(r-\eta) \mu(B(x,r_n))}.
\]
Since $z_n \in G_m^*$, we know that
\[ 
(r - \eta) r_n\frac{P(E, B(z_n, (r - \eta) r_n))}{\mu(B(z_n, (r-\eta) r_n))} 
\geq \frac{\gamma}{2 C_P}
\]
for all large enough $n$ (that is, when $(r-\eta) r_n<1/m$).  
Additionally, by Lemma~\ref{lem:consequences of annular decay},
\begin{align*}
\limsup_{n\to\infty} \frac{\mu(B(z_n, (r-\eta)r_n))}{(r-\eta) \mu(B(x,r_n))}
&= \frac{1}{(r-\eta)}\limsup_{n\to\infty} \frac{\mu(B(z_n, (r-\eta)r_n))}{\mu(B(x,r_n))}\\
  &= \frac{\mu_\infty(B_{X_{\infty}}(z,r-\eta))}{r-\eta}\\
  &\ge \frac{1}{C_d^2}\, \frac{\mu_\infty(B_{X_{\infty}}(z,r))}{r},
\end{align*}
since $\mu_{\infty}$ is doubling with constant $C_d^2$.
Thus by~\eqref{bigineq}, we get
\[
\pi_\infty(B_{X_\infty}(z,r))\ge \frac{\gamma}{2C_d^2 C_P} \frac{\mu_\infty(B_{X_{\infty}}(z,r))}{r}.
\]
Next we rewrite the term on the left-most side of~\eqref{bigineq} as
\[ 
\liminf_{n \to \infty} (r + \epsilon_n) r_n
\frac{ P(E, B(z_n, (r + \epsilon_n) r_n))}{\mu(B(z_n, (r + \epsilon_n) r_n))} 
   \cdot \frac{\mu(B(z_n, (r + \epsilon_n)r_n))}{(r + \epsilon_n) \mu(B(x,r_n))} .
\]
Since $z \in G_m^*$, we know that
\[
 (r + \epsilon_n) r_n\frac{P(E, B(z_n, (r + \epsilon_n) r_n))}{\mu(B(z_n, (r + \epsilon_n) r_n))}\le 2C_d
\]
for all large enough $n$.  
Similarly to above, we obtain
\[
\lim_{n\to\infty}\frac{\mu(B(z_n, (r + \epsilon_n)r_n))}{(r + \epsilon_n) \mu(B(x,r_n))}
=  \frac{\mu_\infty(B_{X_\infty}(z,r))}{r},
\]
whence from~\eqref{bigineq} we obtain
\[
\pi_\infty(B_{X_\infty}(z,r))
\le 2C_d\, \frac{\mu_\infty(B_{X_\infty}(z,r))}{r}.
\]
This proves~\eqref{eq:T1}.

It now only remains to show~\eqref{eq:T2}. 
It suffices to show that when $z\in (E)_\infty\cup (E^c)_\infty$, 
for each $k\in\N$ there is some $r_z>0$ such that 
$\pi_\infty(B_{X_\infty}(z,r_z))
\le C \mu_\infty(B_{X_\infty}(z,r_z))/(kr_z)$, from which
we will know (via~\eqref{eq:T1}) that there must be $\rho_z>0$ with
$\pi_\infty(B_{X_\infty}(z,\rho_z))=0$.
Fix $k\in\N$. Suppose $z\in (E)_\infty$ is in the support of $\pi_{\infty}$.
Then there is some
$r_z>0$ such that
\[
\frac{\mu_\infty(B_{X_\infty}(z,r_z)\cap (E^c)_\infty)}{\mu_\infty(B_{X_\infty}(z,r_z))}<\frac{1}{k}.
\]
Let $\epsilon_n\to 0$ and $G_m^*\ni z_n \to z$ as given by the conclusion of Proposition~\ref{prop:perimeterinfinitylb}.
By Lemma \ref{lem:consequences of annular decay} we have
\begin{align*}
\frac{1}{k}\lim_{n\to\infty}\frac{\mu(B(z_n,r_z r_n))}{\mu(B(x,r_n))}=
\frac{\mu_\infty(B_{X_\infty}(z,r_z))}{k}
&> \mu_\infty(B_{X_\infty}(z,r_z)\cap (E^c)_\infty)\\
 &= \lim_{n\to\infty}\frac{\mu(B(z_n,r_z r_n)\cap E^c)}{\mu(B(x,r_n))}.
\end{align*}
Thus for all large enough $n\in\N$,
\begin{align*}
\frac{1}{k} \mu(B(z_n,r_z r_n))\ge \mu(B(z_n,r_z r_n)\cap E^c).
\end{align*}
Since $z_n\in G_m^*$, by~\eqref{perimetercomp} we have that for all large enough $n\in\N$, 
\[
r_z r_n
\frac{ P(E,B(z_n, r_z r_n))}{\mu(B(z_n,r_z r_n))}
\le K \frac{\mu(B(z_n,r_z r_n)\cap E^c)}{\mu(B(z_n,r_z r_n))}
\le \frac{K}{k}.
\]
Thus for any $\eta>0$,
\begin{align*}
\pi_\infty(B_{X_\infty}(z,r_z-\eta))
\le \liminf_{n\to\infty}
r_n\frac{ P(E,B(z_n,r_z r_n))}{\mu(B(x,r_n))}
  &\le \frac{ K}{k} \frac{1}{r_z}
  \liminf_{n\to\infty}\frac{\mu(B(z_n,r_z r_n))}{\mu(B(x,r_n))}\\
&= \frac{ K}{k} \frac{\mu_\infty(B_{X_\infty}(z,r_z))}{r_z}
\end{align*}
by Lemma~\ref{lem:consequences of annular decay}.
Since $K=K(C_d,C_P)$, letting $\eta\to 0$ gives  
\[
 \pi_\8(B_{X_\8}(z,r_z))\le \frac{K}{k}\, \frac{\mu_\infty(B_{X_\infty}(z,r_z))}{r_z}.
\]
By choosing $k\in\N$ large enough, the above would violate the left-hand inequality of~\eqref{eq:T1}, and so
$z$ cannot be in the support of $\pi_\8$.
Thus, there is some $\rho_z>0$
with $\pi_\infty(B(z,\rho_z))=0$. Since this happens for every 
$z\in (E)_\infty$, we know that $\pi_\infty$ does not charge $(E)_\infty$.
Indeed, with 
\[
(E)_\infty \subset U:=\bigcup_{z\in (E)_\infty}B(z,\rho_z),
\]
an open set containing $(E)_\infty$, we have $\pi_\infty(U)=0$.
A similar argument gives the existence of an open set
$V\supset (E^c)_\infty$
with $\pi_\infty(V)=0$. This completes the proof.
\end{proof}

Next we show that the set $(E)_\infty$ is of locally finite perimeter
in the space $X_{\infty}$. 
Denote by $\mathcal{H}_\infty$ the co-dimension $1$ Hausdorff measure
in the space $(X_{\infty},d_{\infty},\mu_{\infty})$.

\begin{theorem}\label{thm:main3}  
For all $R>0$,
we have $P((E)_{\infty},B_{X_\infty}(x_{\infty},R))<\infty$.
The measures $P((E)_\infty,\cdot)$, $\pi_\infty$, and
$\mathcal{H}_\infty({\partial^*(E)_\infty}\cap \cdot)$ are comparable.
The sets $(E)_\infty$ and $(E^c)_\infty$ are open in $X_\infty$.
\end{theorem}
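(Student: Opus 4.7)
The plan is to first establish that $(E)_\infty$ has locally finite perimeter via lower semicontinuity of BV total variation under pointed measured Gromov-Hausdorff convergence, then combine the Ambrosio structure formula in $X_\infty$ with Theorem~\ref{thm:Main2} and Lemma~\ref{lem:densities and Hausdorff measures} to obtain comparability of the three measures, and finally deduce openness of $(E)_\infty$ and $(E^c)_\infty$ from the vanishing of $\pi_\infty$ off $\partial^*(E)_\infty$ combined with the relative isoperimetric inequality and connectedness of balls. For the first step, the weak$^*$ convergence $\mu_n(\cdot\cap E)\overset{*}{\rightharpoonup}\chi_{(E)_\infty}\,d\mu_\infty$ from Proposition~\ref{thm:Einfinitydisjoint}, together with the uniform local bound~\eqref{eq:bdd-perim-scaled} on the scaled perimeters $P_n(E,\cdot)$, allows me to invoke the standard lower semicontinuity of perimeter under pointed measured Gromov-Hausdorff convergence (carried out in the ambient space $Z$) to obtain $P((E)_\infty,U)\le\liminf_{n}P_n(E,U)$ for every open $U\subset X_\infty$. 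Combining this with the upper semicontinuity of the weak$^*$ limit on compact sets, applied to a slightly enlarged closed ball, yields $P((E)_\infty,B_{X_\infty}(x_\infty,R))\le\pi_\infty(\overline{B_{X_\infty}(x_\infty,R+\eps)})<\infty$, and letting $\eps\to 0$ along radii for which $\pi_\infty$ does not charge the sphere upgrades this to the Radon-measure bound $P((E)_\infty,\cdot)\le C\,\pi_\infty$.

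Because $X_\infty$ inherits the doubling and $1$-Poincar\'e constants of $X$ quantitatively, the structure formula~\eqref{eq:def of theta} applies in $X_\infty$ to give $dP((E)_\infty,\cdot)=\theta_{(E)_\infty}\,d\mathcal H_\infty$ on $\partial^*(E)_\infty$, with $\theta_{(E)_\infty}$ pinched between positive constants depending only on the doubling and Poincar\'e constants. Meanwhile, Theorem~\ref{thm:Main2} gives the two-sided density estimate
\[
\frac{1}{C}\frac{\mu_\infty(B(z,r))}{r}\le\pi_\infty(B(z,r))\le C\frac{\mu_\infty(B(z,r))}{r}
\]
for every $z\in\supp\pi_\infty$, and Lemma~\ref{lem:densities and Hausdorff measures} then shows that $\pi_\infty$ is comparable to $\mathcal H_\infty$ on $\supp\pi_\infty$. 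From~\eqref{eq:T2} we have $\supp\pi_\infty\subset\partial^*(E)_\infty$, and conversely $P((E)_\infty,\cdot)\le C\,\pi_\infty$ forces $\mathcal H_\infty(\partial^*(E)_\infty\setminus\supp\pi_\infty)=0$ via~\eqref{eq:def of theta}. Combining these facts yields the claimed comparability of $P((E)_\infty,\cdot)$, $\pi_\infty$, and $\mathcal H_\infty(\partial^*(E)_\infty\cap\cdot)$.

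For openness, let $z\in (E)_\infty$. The proof of Theorem~\ref{thm:Main2} explicitly produces a radius $\rho>0$ with $\pi_\infty(B_{X_\infty}(z,\rho))=0$, and by the comparability just proved $P((E)_\infty,B_{X_\infty}(z,\rho))=0$. Applying the relative isoperimetric inequality~\eqref{eq:relative isoperimetric inequality} inside $X_\infty$ to each subball $B_{X_\infty}(w,r)\subset B_{X_\infty}(z,\rho)$ then shows that for every $w\in B_{X_\infty}(z,\rho/2)$ and all sufficiently small $r$, one of $\mu_\infty(B_{X_\infty}(w,r)\cap(E)_\infty)$ or $\mu_\infty(B_{X_\infty}(w,r)\setminus(E)_\infty)$ vanishes. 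Splitting $B_{X_\infty}(z,\rho/2)$ into the two disjoint open sets on which each alternative eventually persists produces a clopen decomposition of the ball; as balls in geodesic spaces are connected, one of the two sets is empty, and the $\mu_\infty^E$-density $1$ property at $z$ forces the nonempty one to be that on which $(E)_\infty$ eventually fills each small ball. Hence $B_{X_\infty}(z,\rho/2)\subset(E)_\infty$, and the same argument applied to $E^c$ handles $(E^c)_\infty$. The main obstacle I anticipate is the passage from lower semicontinuity plus weak$^*$ convergence to the actual Radon-measure inequality $P((E)_\infty,\cdot)\le C\,\pi_\infty$, which requires careful handling of boundaries of test sets in $Z$ via the annular decay property; a close second is the identification $\supp\pi_\infty=\partial^*(E)_\infty$ up to $\mathcal H_\infty$-null sets, since $\partial^*(E)_\infty$ need not be closed.
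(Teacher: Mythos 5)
Your outline for the comparability and openness claims tracks the paper's own reasoning closely: use the structure formula~\eqref{eq:def of theta} in $X_\infty$ for $P((E)_\infty,\cdot)$, the two-sided density estimate~\eqref{eq:T1} together with Lemma~\ref{lem:densities and Hausdorff measures} for $\pi_\infty$, identify $\supp\pi_\infty$ with $\partial^*(E)_\infty$ via~\eqref{eq:T2} and the relative isoperimetric inequality, and then deduce openness of $(E)_\infty$ and $(E^c)_\infty$ by a connectedness argument inside a ball where $P((E)_\infty,\cdot)$ vanishes. These parts are sound and essentially match the paper.

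The first part, however, rests on a tool you do not have. You invoke ``the standard lower semicontinuity of perimeter under pointed measured Gromov-Hausdorff convergence'' to obtain $P((E)_\infty,U)\le\liminf_n P_n(E,U)$, but no such standard lemma exists at this level of generality. Lower semicontinuity of the BV energy under $L^1_{\loc}$ convergence of functions \emph{on a fixed metric measure space} is classical (see \cite[Theorem~3.7]{Mir}), but here each $P_n(E,\cdot)$ is computed against the varying metric $d_n$ and measure $\mu_n$, and the target $P((E)_\infty,\cdot)$ against yet another pair. Nor can the comparison be relocated to the ambient space $Z$: in the Herron construction $Z$ is the completion of the pairwise disjoint union of the $X_n$, so the pushforwards $\iota_{n,*}\mu_n$ are mutually singular and the sets $\iota_n(E)$ do not converge in any $L^1_{\loc}(Z)$ sense, only weakly$^*$. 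The obstacle you flagged (handling boundaries of test sets via annular decay) is real but secondary; the genuine gap is that the cross-space semicontinuity estimate you are invoking is precisely what needs to be proved.

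Establishing that estimate is the technical heart of the theorem, and it is what the paper's discrete-convolution construction supplies. The paper builds Lipschitz functions $v_n^\epsilon$ directly on $X_\infty$ whose coefficients are ball averages $u_{B_n(z_{k,n},\epsilon)}$ of $\chi_E$ over balls in $X_n$ (transported across the Gromov-Hausdorff approximation), uses the Poincar\'e inequality on $X_n$ to bound $\Lip v_n^\epsilon$ pointwise by scaled perimeters, and integrates using bounded overlap of the net to get $\int_{B_{X_\infty}(x_\infty,1)}\Lip v_n^\epsilon\,d\mu_\infty\le C\,P_n(E,B_n(x,2))$, which is uniformly bounded by~\eqref{eq:bdd-perim-scaled}. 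A diagonal subsequence converges in $L^1(B_{X_\infty}(x_\infty,1))$, the limit is identified with $\chi_{(E)_\infty}$ via the density conditions, and only then is the classical lower semicontinuity on the single space $X_\infty$ invoked to give $P((E)_\infty,B(z,r))\le\pi_\infty(B_{X_\infty}(z,3r))$. Your proposal needs to supply this transfer mechanism---or a genuine substitute---rather than assert it as known.
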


\begin{proof}
To prove the first claim,
we use a discrete convolution construction.
Assume for simplicity that $R=1$.
Fix $0<\epsilon<1/9$, and 
take a maximal $\epsilon$-separated set
$\{z_k\}_{k=1}^{\infty}\subset X_{\infty}$. Then the balls $B_k:=B_{X_\infty}(z_k,\epsilon)$ cover $X_{\infty}$
and $B_{X_\8}(z_k,14\eps)$ have bounded overlap.
For each $k$ we can find points
$z_{k,n}\in X_n$ converging to $z_k$ (in $Z$).
Thus, by considering a tail-end of the sequence if necessary, 
there is a sequence $0<\delta_n\to 0$ with $\delta_n < \epsilon$
such that $d_{Z}(\iota_\8(z_k),\iota_n(z_{k,n}))<\delta_n$, and by
Lemma~\ref{lem:consequences of annular decay},
\begin{equation}\label{eq:Bk and weak star convergence 1a}
(1-\delta_n)\,\mu_\infty^E(B_{X_\infty}(z_k,\epsilon))
\le \mu_n(B_n(z_{k,n},\epsilon)\cap E)
\end{equation}
and
\begin{equation}\label{eq:Bk and weak star convergence 2a}
\mu_n(B_n(z_{k,n},\epsilon))
\le (1+\delta_n)\mu_\infty(B_{X_\infty}(z_k,\epsilon))
\le (1+\delta_n)^2\, \mu_n(B_n(z_{k,n},\epsilon)).
\end{equation}
Observe that we need only do this for the points
$z_k\in B_{X_\infty}(x_\infty,2)$,
of which there are only finitely many,
and thus we can choose $\delta_n>0$ such that the above hold for all
corresponding indices $k$.
By the bounded overlap property of the balls $B_{X_\8}(z_k,14\epsilon)$, we also have
that for each such positive integer $n$, the collection of balls $B_{X_n}(z_{k,n},6\epsilon)$ has a
bounded overlap; this will be needed in the computations~\eqref{eq:global-dominance}.

Now take a partition of unity by means of $C/\epsilon$-Lipschitz functions
$\phi_k\in \Lip(X_{\infty};[0,1])$ with
$\supp(\phi_k)\subset B_{X_\infty}(z_k,2\epsilon)$ for
each $k\in\N$; see e.g. \cite[p. 104]{HKST}. 
Let $u:=\chi_E$, and for each $n \in\N$ we set
\begin{equation}\label{eq:discrete convolution def}
v_n^\epsilon:=\sum_{k=1}^{\infty}u_{B_n(z_{k,n},\epsilon)}\phi_k,
\end{equation}
where
\[
u_{B_n(z_{k,n},\epsilon)}=\vint_{B_n(z_{k,n},\epsilon)}u\,d\mu_n=\vint_{B(z_{k,n},r_n\epsilon)}u\, d\mu
 =\frac{\mu(B(z_{k,n},r_n\epsilon)\cap E)}{\mu(B(z_{k,n},r_n\epsilon))}.
\]
Let $l\in\N$ such that $B_l\cap B_{X_\infty}(x_\infty,1)\neq \emptyset$.
Given $y_1,y_2\in B_l$, we estimate
\begin{align*}
|v_n^\epsilon(y_1)-v_n^\epsilon(y_2)|
&=\left| \sum_{k=1}^{\infty}u_{B_n(z_{k,n},\epsilon)}\phi_k(y_1)
-\sum_{k=1}^{\infty}u_{B_n(z_{k,n},\epsilon)}\phi_k(y_2)\right|\\
&=\left| \sum_{k=1}^{\infty}(u_{B_n(z_{k,n},\epsilon)}
-u_{B_n(z_{l,n},\epsilon)})(\phi_k(y_1)-\phi_k(y_2))\right|\\
&\le \sum_{k=1}^{\infty}|u_{B_n(z_{k,n},\epsilon)}
-u_{B_n(z_{l,n},\epsilon)}||\phi_k(y_1)-\phi_k(y_2)|\\
&=\sum_{\substack{k\in\N \\ B_{X_\infty}(z_{k},2\epsilon)\cap B_l\neq\emptyset}}|u_{B_n(z_{k,n},\epsilon)}
-u_{B_n(z_{l,n},\epsilon)}||\phi_k(y_1)-\phi_k(y_2)|\\
&\le C\sum_{\substack{k\in\N \\ B_{X_\infty}(z_{k},2\epsilon)\cap B_l\neq\emptyset}}|u_{B_n(z_{k,n},\epsilon)}
-u_{B_n(z_{l,n},\epsilon)}|\frac{d_\infty(y_1,y_2)}{\epsilon}.
\end{align*}
Note that for the indices $k$ in the last sum, we have $d_\infty(z_k,z_l)\le 3\epsilon$ and
so $d_n(z_{k,n},z_{l,n})\le 5\epsilon$.
Thus, each ball $B_n(z_{k,n},\epsilon)$ is contained in $B_n(z_{l,n},6\epsilon)$.
Thus, we can continue the estimate for via the Poincar\'e inequality:
\begin{align*}
|v_n^\epsilon(y_1)-v_n^\epsilon(y_2)|
&\le C\vint_{B_n(z_{l,n},6\epsilon)}
|u-u_{B_n(z_{l,n},6\epsilon)}|\,d\mu_n
\frac{d_\infty(y_1,y_2)}{\epsilon}\\
&\le C \epsilon\, \frac{d_\infty(y_1,y_2)}{\epsilon}\, 
\frac{ P_n(E,B_n(z_{l,n},6\epsilon))}{\mu_n(B_n(z_{l,n},6\epsilon))}.
\end{align*}
Thus, 
we get for $y\in B_l$,
\[
\Lip v_n^\epsilon(y)\le C \frac{ P_n(E,B_n(z_{l,n},6\epsilon))}{\mu_n(B_n(z_{l,n},6\epsilon))}.
\]
Therefore, in $B_{X_\infty}(x_\infty,1)$,
\begin{equation}\label{eq:dominant-measure}
\Lip v_n^\epsilon\le C  \sum_{l=1}^{\infty}\chi_{B_l}
\frac{ P_n(E,B_n(z_{l,n},6\epsilon))}{\mu_n(B_n(z_{l,n},\epsilon))}.
\end{equation}
By the definition of pointed measured Gromov-Hausdorff convergence,
$\lim_{n \to \infty} d_{Z}(\iota_n(x),\iota(x_\8))=0$, see Definition~\ref{def:pmGH} 
and the discussion preceding it. 
If $B_l\cap B_{X_\infty}(x_\infty,1)\neq \emptyset$,
then $B_n(z_{l,n},2\epsilon)\cap B_n(x,1+\eps)\neq \emptyset$. 
Hence by~\eqref{eq:Bk and weak star convergence 2a} 
and by the bounded overlap of the family $B_n(z_{k,n},6\epsilon)$, $k\in\N$, we have 
\begin{equation}\label{eq:global-dominance}
\begin{split}
\int_{B_{X_\infty}(x_{\infty},1)}\Lip v_n^\epsilon\,d\mu_{\infty}
&\le C\sum_{\substack{l\in \N \\
		B_n(z_{l,n},2\epsilon)\cap B_n(x,1+\eps)\neq \emptyset}}\mu_{\infty}(B_l)
\frac{ P_n(E,B_n(z_{l,n},6\epsilon))}{\mu_n(B_n(z_{l,n},\epsilon))}\\
&\le C(1+\delta_n) \sum_{\substack{l\in \N \\ B_n(z_{l,n}, 2\epsilon)\cap B_n(x,1+\eps)\neq \emptyset}}
P_n(E,B_n(z_{l,n},6\epsilon))\\
&\le C  P_n(E,B_{n}(x,1+9\epsilon))\\
&\le C  P_n(E,B_{n}(x,2)).
\end{split}
\end{equation} 
This remains bounded as $n\to\infty$, see~\eqref{eq:bdd-perim-scaled}.
We can do the above for a sequence $\epsilon_i\to 0$, with $n=n(i)\to \infty$ and $\delta_{n_i}\to 0$,
to obtain a sequence of functions
$v_i=v_{n(i)}^{\epsilon_i}\in\Lip(B_{X_{\infty}}(x_{\infty},1))$.
Since
$V(v_i,(B_{X_{\infty}}(x_{\infty},1))$ is bounded
by \eqref{eq:global-dominance},
we find a subsequence, also denoted by $v_i$,
such that $v_i\to w$ in $L^1(B_{X_{\infty}}(x_{\infty},1))$,
see \cite[Theorem 3.7]{Mir}. By lower semicontinuity,
\begin{equation}\label{w is in BV}
V(w,B_{X_{\infty}}(x_{\infty},1))
\le \liminf_{i\to\infty}\int_{B_{X_\infty}(x_{\infty},1)}\Lip v_i\,d\mu_{\infty}
\le C \limsup_{n\to\infty}P_n(B_n(x,2))\le \pi_\infty(x_{\infty},3)
\end{equation}
and so
$w\in BV(B_{X_{\infty}}(x_{\infty},1))$.
We need to check that $w= \chi_{(E)_{\infty}}$ in $L^1(B_{X_{\infty}}(x_{\infty},1))$.
To do so, fix $y\in B_{X_\infty}(x_{\infty},1)\cap (E)_\infty$
and fix $\eta\in (0,1)$. Then by definition of $(E)_\8$, for large enough $i\in\N$ we have
\[
\frac{\mu_{\infty}^E(B_{X_\infty}(y,4\epsilon_i))}
{\mu_{\infty}(B_{X_\infty}(y,4\epsilon_i))}\ge 1-\eta.
\]
We denote the covering of $X_{\infty}$ corresponding to an index $i\in\N$
by $B_k^i:=B(z_k^i,\epsilon_i)$.
It follows that for all balls $B_k^i$
with $2B_k^i$ containing $y$, we have (note that $\mu_{\infty}$
is doubling with constant $C_d^2$)
\[
\frac{\mu_{\infty}^{E^c}(B_{X_\infty}(z_k^i, \epsilon_i))}
{\mu_{\infty}(B_{X_\infty}(z_k^i, \epsilon_i))}
\le C_d^6\frac{\mu_{\infty}^{E^c}(B_{X_\infty}(y,4\epsilon_i))}
{\mu_{\infty}(B_{X_\infty}(y,4\epsilon_i))}
\le C_d^6\eta.
\]
Thus by \eqref{eq:Bk and weak star convergence 1a} and \eqref{eq:Bk and weak star convergence 2a},
\[
\frac{\mu_{n_i}(B_{n_i}(z_{k,n_i},\epsilon_i)\cap E)}{\mu_{n_i}(B_{n_i}(z_{k,n_i},\epsilon_i))}
\ge \frac{1-\delta_{n_i}}{1+\delta_{n_i}}
\frac{\mu_{\infty}^E(B_{X_\infty}(z_k^i, \epsilon_i ))}
{\mu_{\infty}(B_{X_\infty}(z_k^i, \epsilon_i))}
\ge \frac{1-\delta_{n_i}}{1+\delta_{n_i}}(1-C_d^6\eta).
\]
Now, by definition of the discrete
convolutions \eqref{eq:discrete convolution def}, we have
\[ v_i(y) \geq \frac{1-\delta_{n_i}}
{1+\delta_{n_i}} (1-C_d^6 \eta). \]
Letting $i\to\infty$, we get
\[
w(y)\ge 1-C_d^6\eta.
\]
Since $\eta>0$ was arbitrary, we conclude $w(y)=1$ (the values taken on by the functions $v_i$ are between $0$ and $1$, so
necessarily $w(y)\le 1$). Similarly, we get $w(y)=0$ for all $y\in (E^c)_{\infty}$.
Also by Proposition~\ref{thm:Einfinitydisjoint} we know that $\mu_\8(X_\8\setminus ((E)_\8\cup (E^c)_\8))=0$.
Thus $w=\chi_{(E)_{\infty}}$ as functions in
$L^1(B_{X_{\infty}}(x_{\infty},1))$.
Recall that we are assuming $R=1$ just for convenience; we conclude that
$\chi_{(E)_{\infty}}\in BV(B_{X_{\infty}}(x_{\infty},R))$
for all $R>0$.

Next, for $z\in X_\infty$ and $r>0$,
by an argument analogous to
that leading to~\eqref{w is in BV}, we obtain
\begin{equation}\label{eq:P controlled by pi}
P((E)_\infty, B(z,r))\le \pi_\infty(z,3r).
\end{equation}

From the final part of the proof of Theorem~\ref{thm:Main2},
we know that 
$\pi_\infty$ is supported inside $X\setminus (U\cup V)$,
where $U$ and $V$ are
(open) neighborhoods of $(E)_\infty$ and $(E^c)_\infty$, respectively.
Conversely,
if $z\in X_{\infty}\setminus [(E)_\infty\cup(E^c)_\infty]$,
which we recall is the same set as
$\partial^*(E)_\infty$, then $z$ is in the support of 
$P((E)_\infty,\cdot)$ by the relative isoperimetric inequality
\eqref{eq:relative isoperimetric inequality}.
Thus by \eqref{eq:P controlled by pi}, $z$ is in the support of  
$\pi_\infty$.
In conclusion, the support of $\pi_\infty$ is exactly $\partial^*(E)_\infty=X_{\infty}\setminus [U\cup V]$.
Moreover,
by~\eqref{eq:T1} and Lemma~\ref{lem:densities and Hausdorff measures} we 
know that $\pi_\infty$ is comparable to $\mathcal{H}_\infty({\partial^*(E)_\infty}\cap \cdot)$.
By~\eqref{eq:def of theta} we know that
$P((E)_\infty,\cdot)$ is also comparable to
$\mathcal{H}_\infty({\partial^*(E)_\infty}\cap \cdot)$.
Thus the three
measures $\pi_\infty$, $P((E)_\infty,\cdot)$, and
$\mathcal{H}_\infty({\partial^*(E)_\infty}\cap \cdot)$ are all
comparable.
Finally, by the relative isoperimetric inequality
and the fact that $P((E)_\infty,\cdot)$ does not see the set $U$,
it follows that $U$ cannot intersect $(E^c)_\infty$. Thus
$(E)_\infty=U$ and similarly $(E^c)_\infty=V$.
\end{proof}

\begin{remark}
If $\mu$ is an Ahlfors $s$-regular measure for some $s>1$
(recall \eqref{eq:Ahlfors regular measure}), then
it is straightforward to verify that $\mu_{\infty}$ is
also Ahlfors $s$-regular in $X_{\infty}$, and then
by Theorems \ref{thm:Main2} and \ref{thm:main3},
$P((E)_\infty,\cdot)$
(and $\pi_\infty$) are Ahlfors $(s-1)$-regular measures
in $X_{\infty}$.
This corresponds to what we get in a Euclidean space $\R^n$,
for $s=n$.
\end{remark}

\section{Asymptotic quasi-least gradient property}

 From Theorem~\ref{thm:tangentharm} we
now know that asymptotic limits ($\mu$-a.e.) of a BV function outside of the Cantor and jump parts
of the function are of least gradient.
We will show in this section that at co-dimension $1$
almost every point of the measure-theoretic boundary of a set $E$ of finite perimeter,
any limit set $(E)_\8$ 
is a set of quasiminimal boundary surface as defined in~\cite{KKLS}, that is, $\chi_{(E)_\8}$ is of
quasi-least gradient. First we develop some preliminary 
results that are also of independent interest.

\subsection{Asymptotic minimality for sets of finite perimeter}

The following theorem shows that given a set $E$ of finite perimeter, at essentially almost every point in $\bdry^*E$ the set $E$ is 
asymptotically a minimal surface; compare this to~\cite[Proposition~5.7]{A}, where a weaker notion of asymptotic quasiminimality
is established, where the quasiminimality condition requires to compare (locally) the perimeter of $E$ with the perimeter
of modifications of $E$ by balls alone. 

\begin{theorem}\label{Asymptotic-FinitePerimeter}
Let $E \subset X$ be a set of finite perimeter.  Then
\[ 
\lim_{r \to 0} \left( \frac{ \inf_{u \in BV_c(B(x_0, r))} V(\chi_E + u, B(x_0, r)) }{P(E, B(x_0, r))} \right) \geq 1 
\]
for $P(E,\cdot)$-a.e. $x \in X$. 
\end{theorem}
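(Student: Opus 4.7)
The natural plan is proof by contradiction combined with a Vitali covering argument. Assume the conclusion fails on a Borel set $A \subset \bdry^*E$ with $P(E, A) > 0$. Shrinking $A$ if necessary, I may assume a uniform deficit: there exists $\delta > 0$ such that every $x \in A$ admits arbitrarily small radii $r$ with a competitor $u_{x,r} \in BV_c(B(x, r))$ satisfying $V(\chi_E + u_{x,r}, B(x, r)) < (1 - \delta)\, P(E, B(x, r))$. Applying the coarea formula~\eqref{eq:coarea formula} to $\chi_E + u_{x,r}$ and averaging over levels $t \in (0, 1)$, I may further replace each such competitor by a characteristic-function competitor $\chi_{F_{x,r}}$, where $F_{x,r}$ coincides with $E$ outside a compact subset of $B(x, r)$ and still achieves perimeter reduction by a factor at least $1 - \delta/2$ inside $B(x, r)$.

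Because $P(E, \cdot)$ is asymptotically doubling on $\bdry^*E$ by Lemma~\ref{lem:perim-growth-bounds}, these balls form a Vitali cover of $A$ for the measure $P(E, \cdot)$. I extract a countable pairwise disjoint subfamily $\{B_i\}$ with $P(E, A \setminus \bigcup_i B_i) = 0$. The compact support of each $u_{x_i, r_i}$ inside the open ball $B_i$ means that $F_i := F_{x_i, r_i}$ coincides with $E$ on an entire neighborhood of $\partial B_i$, so pasting $F_i$ on $B_i$ and $E$ off $\bigcup_i B_i$ produces a globally defined set $F$ with no perimeter contribution on the spheres. Adding up the local estimates yields
\[
P(F, X) \le P(E, X) - (\delta/2)\, P(E, A).
\]

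The main obstacle, and the step I expect to be technically most delicate, is extracting a contradiction from this strictly smaller total perimeter, since $E$ itself is not assumed to be a global minimizer. The plan is to iterate: set $E_0 = E$ and let $E_{k+1}$ be the set $F$ produced by applying the above construction to $E_k$ with deficit $\delta$. At each step the total perimeter drops by at least $(\delta/2)\, P(E_k, A_k)$, where $A_k$ is the failure set of $E_k$. To force $P(E_k, X) \to -\infty$ and obtain the contradiction, one must show that the $P(E_k, \cdot)$-mass of $A_k$ does not collapse along the iteration; this is arranged by choosing the Vitali radii so small that for $P(E, \cdot)$-a.e.~point $y \in A$ the asymptotic ratio at $y$ is unchanged by the modifications performed at strictly larger scales, so that $A \subset A_k$ up to $P(E_k, \cdot)$-null sets and $P(E_k, A_k) \ge P(E_k, A) \gtrsim P(E, A) > 0$ uniformly in $k$.

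A cleaner alternative route goes through the tangent-space analysis of Section~5. At a typical $x \in A$, rescaling the local competitors $F_{x, r_n}$ along $r_n \to 0$ produces, via $L^1_{\loc}$ compactness and lower semicontinuity of perimeter, a subsequential limit set $F_\infty \subset X_\infty$ coinciding with $(E)_\infty$ outside the unit tangent ball and satisfying
\[
P(F_\infty, B_{X_\infty}(x_\infty, 1)) \le (1 - \delta/2)\, \pi_\infty(B_{X_\infty}(x_\infty, 1)).
\]
Combining this with the comparability $\pi_\infty \asymp P((E)_\infty, \cdot)$ from Theorem~\ref{thm:main3} and an identification of densities $\theta_{(E)_\infty}(x_\infty) = \theta_E(x)$ at the base point would yield a direct contradiction with the sharp constant $1$. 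The subtle point in this alternative is upgrading the mere comparability of $\pi_\infty$ and $P((E)_\infty, \cdot)$ to equality of their densities at $x_\infty$, which is the essential extra ingredient beyond what Section~5 provides and is where most of the work lies.
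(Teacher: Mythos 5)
Your first route has a genuine gap, and it is precisely the one you flag as ``technically most delicate.'' The issue is that the iteration cannot be salvaged in the way you sketch. The Vitali cover $\{B_i\}$ of $A$ captures $P(E,\cdot)$-a.e.\ point of $A$; for such a $y\in A\cap B_i$, the passage from $E$ to $E_1$ replaces $E$ by $F_i$ inside $B_i$, which is an open ball containing $y$. There is no ``strictly larger scale'': every scale $r < \dist(y, \partial B_i)$ sees only $F_i$, whose local behavior at $y$ is completely unrelated to that of $E$. So the claim that $A\subset A_1$ up to $P(E_1,\cdot)$-null sets has no support, and without it the perimeter decrement $\delta\, P(E_k,A_k)/2$ can collapse to zero and the iteration stalls. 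Your second route is circular in the paper's logical order: the only minimality-type statement about $(E)_\8$ available is Theorem~\ref{thm:minimaltangent}, whose proof uses the present theorem; without it, a competitor $F_\8$ with strictly smaller perimeter than $(E)_\8$ in a ball produces no contradiction.

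The paper's actual argument is a one-shot estimate, not an iteration, and the crucial device you are missing is to use the ball \emph{radius} (rather than repeated applications of the covering) as the parameter to be driven to zero. One fixes the deficit level $1/n$ and, for each $k$, chooses a Vitali cover of $A_n$ by disjoint balls of radius $<1/k$ on whose boundaries $P(E,\cdot)$ vanishes. The pasted competitor $h_k$ then differs from $\chi_E$ only on a union of balls of radius $<1/k$, and the compactly-supported Poincar\'e inequality~\eqref{eq:poincare inequality with zero bry value} gives $\|h_k-\chi_E\|_{L^1(X)}\le Ck^{-1}V(\chi_E,X)\to 0$. Lower semicontinuity of the total variation then gives $V(\chi_E,X)\le\liminf_k V(h_k,X)$, while the deficit on each ball gives $V(h_k,X)\le V(\chi_E,X)-n^{-1}P(E,K_k)$ with $K_k$ the union of chosen balls; comparing forces $P(E,\bigcap_k K_k)=0$, and since the Vitali cover gives $P(E, A_n\setminus K_k)=0$ for each $k$, one concludes $P(E,A_n)=0$. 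Note also that there is no need to pass to characteristic-function competitors via the coarea formula; the paper works directly with general $u\in BV_c$, only needing $P(E,\partial B)=0$ for the pasting step (via Lemma~\ref{lem:merging}-type estimates), which is free for a.e.\ radius.
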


\begin{proof}
Let
\[
A := \left\{ x \in X :\, \liminf_{r \to 0} 
      \left( \frac{ \inf_{u \in BV_c(B(x, r))} V(\chi_E + u, B(x, r)) }{P(E, B(x, r))} \right) < 1 \right\} 
\]
Note that $A$ is the increasing limit of sets $A_n$ where
\[ 
A_n :=  \left\{ x \in X:\, \liminf_{r \to 0} 
    \left( \frac{ \inf_{u \in BV_c(B(x, r))} V(\chi_E + u, B(x, r)) }{P(E, B(x, r))} \right) < 1-\frac{1}{n} \right\},\quad n\in\N.
\]
It therefore suffices to show that each $A_n$ satisfies $P(E,A_n)=0$. To this end,
fix $n\in\N$.
Then for every $x \in A_{n}$, there exist $r_i^x \to 0$ and
$u_i^x \in BV_c(B(x, r_i^x))$ with
\begin{equation} \label{gap}
\frac{V(\chi_E + u_i^x, B(x,r_i^x))}{P(E, B(x,r_i^x))} < 1- n^{-1}.
\end{equation}
Furthermore, as $P(E,X)<\infty$, for every $x\in X$
we have $P(E,\partial B(x,r))=0$ for $\mathcal{H}^1$-almost every $r>0$.
We can therefore choose $r_i^x>0$ such that in addition to the above,
$P(E,\partial B(x,r_i^x))=0$ for every $x\in A_{n}$.
Fix $k\in\N$ such that $1/k < \tfrac 14 \diam X$.
The collection $\{\overline{B}(x,r_i^x)\, :\, 0<r_i^x<1/k\}_{x\in A_{n}}$
is a fine cover of $A_{n}$, that is, for every $x\in A_{n}$
we have $\inf_i r_i^x=0$. By~\eqref{eq:density of perimeter} we know that
$P(E,\cdot)$
is asymptotically doubling, and so it
satisfies the Vitali covering theorem, see~\cite[Theorem~3.4.3]{HKST}.
So we can pick a countable pairwise disjoint collection 
$\{B^k_j=B(x_j^k,r_j^k)\}_{j=1}^{\infty}=:\mathcal{G}_k$ such that, recalling also that $P(E,\partial B)=0$ for each
$B\in \mathcal{G}_k$,
\begin{equation}\label{eq:Vital}
P\left(E,A_{n}\setminus \bigcup_{B \in \mathcal{G}_k} B \right)
=P\left(E,A_{n}\setminus \bigcup_{B \in \mathcal{G}_k} \overline{B} \right) = 0.
\end{equation}
We use the collection of balls $B_j^k$ to perturb the function $\chi_E$.
Recall that for each ball $B_j^k$
there is a function $u_j^k\in BV_c(B_j^k)$ as in~\eqref{gap}. Set
\[
h_k:=\chi_E+\sum_{j=1}^{\infty}u_j^k.
\]
By the $1$-Poincar\'e inequality
\eqref{eq:poincare inequality with zero bry value}
for compactly supported functions,  for all $j\in\N$
\[
  \int_{B_j^k}|u_j^k|\, d\mu\le C r_j^k\, V(u_j^k,B_j^k)
  \le C r_j^k \left[V(\chi_E+u_j^k, B_j^k)+V(\chi_E,B_j^k)\right]
  \le C \frac{2-n^{-1}}{k} V(\chi_E,B_j^k).
\]
Therefore by the pairwise disjointness of the balls in the collection $\mathcal{G}_k$,
\[
\int_X|\chi_E-h_k|\, d\mu\le \sum_{j=1}^{\infty}\int_{B_j^k}|u_j^k|\, d\mu
\le \frac{C}{k} V(\chi_E,X).
\]
Therefore $h_k\to \chi_E$ in $L^1(X)$ as $k\to\infty$.
By the lower semicontinuity of the total variation,
\begin{equation} 
V(\chi_E,X) \leq \liminf_{k \to \infty} V(h_k, X). \label{lsc-2} 
\end{equation}
For ease of notation,
for each $j\in\N$ let 
\[
 G_{k,j}:=\bigcup_{i=1}^j\overline{B}_{i}^k \quad\textrm{and}\quad h_{k,j}:=u+\sum_{i=1}^j\pip_i^k.
\]
Now
\begin{align*}
V(h_{k,j}, G_{k,j})&\le V(h_{k,j},\bigcup_{i=1}^jB_i^k)+
   V(h_{k,j},\bigcup_{i=1}^j\partial B_i^k)\\
   &= V(h_{k,j},\bigcup_{i=1}^jB_i^k)+\sum_{i=1}^jV(\chi_E, \partial B_i^k)\\
   &=V(h_{k,j},\bigcup_{i=1}^jB_i^k)
\end{align*}
and so $V(h_{k,j}, G_{k,j})=V(h_{k,j},\bigcup_{i=1}^jB_i^k)$.
Since $G_{k,j}$ is a closed set, it follows that 
\begin{align*}
 V(h_{k,j},X)&=V(h_{k,j},G_{k,j})+V(h_{k,j},X\setminus G_{k,j})\\
   &=\sum_{i=1}^jV(h_{k,j},B_i^k)+V(\chi_E,X\setminus G_{k,j})\\
   &< (1-n^{-1})\sum_{i=1}^jV(\chi_E,B_i^k)+V(\chi_E,X\setminus G_{k,j})\quad\textrm{by }\eqref{gap}\\
   &=V(\chi_E,X)-n^{-1} V\left(\chi_E,\bigcup_{i=1}^jB_i^k\right).
\end{align*}
Therefore
\begin{equation}\label{eq:estimate for Vhk}
\begin{split}
V(h_k,X) \le \liminf_{j\to\infty}V(h_{k,j},X)
  &\le V(\chi_E,X)-n^{-1} \lim_{j\to\infty} V\left(\chi_E,\bigcup_{i=1}^jB_i^k\right)\\
     &= V(\chi_E,X)-n^{-1} V\left(\chi_E,\bigcup_{B\in\mathcal{G}_k}B\right).
\end{split}
\end{equation}
Set $K_k:=\bigcup_{B\in\mathcal{G}_k}B$ and $F_k:=A_n\setminus K_k$ for each $k\in\N$.
We have $P(E,F_k)=0$ for each $k\in\N$ by \eqref{eq:Vital}.
For $F:=\bigcup_{k=1}^{\infty}F_k$ and $K:=\bigcap_{k=1}^{\infty}K_k$
we then have $P(E,F)=0$.
In light of~\eqref{lsc-2} and \eqref{eq:estimate for Vhk}, we have
\[ 
P(E,X) \leq P(E,X) -\liminf_{k\to\infty}n^{-1} P\left(E,K_k\right)
\leq P(E,X) -n^{-1} P(E,K),
\]
and so $P(E, K) = 0$. Since $A_n\subset F\cup K$,
it follows that $P(E,A_{n})=0$.
This completes the proof.
\end{proof}

A similar analysis can be carried out for functions $u\in BV(X)$ with slightly more involved computations to
obtain analogous asymptotic minimality results for $u$; we do not do so here as we have a stronger
result for $u$ outside its jump and Cantor sets in Theorem~\ref{thm:tangentharm}.

\subsection{Quasiminimality at almost every point}


In this subsection we finally prove the quasiminimality property of the asymptotic limit set $(E)_\8$.

\begin{definition}\label{def:Q-min}
A set $E\subset X$ is said to be {\it $K$-quasiminimal, $K\ge 1$}, if for every $B(x,R) \subset X$ and every $\phi \in BV_c(B(x,R))$ we have
\[
 \frac{1}{K}P(E, B(x,R)) \leq V(\chi_E + \phi, B(x,R)).
\]
Without loss of generality and applying a truncation, one can restrict attention to $\phi$ with values in $[-1,1]$, and such that $\chi_E + \phi$ has values in $[0,1]$.
\end{definition}

The asymptotic minimality of $E$ (Theorem \ref{Asymptotic-FinitePerimeter}) can be upgraded to quasiminimality at generic tangents of the limit set $(E)_\8$. In terms of notation, 
here we only consider the sequence
\[ 
(X_n, d_n, x, \mu_n) := \left(X, \frac{1}{r_n} \cdot d, x, \frac{1}{\mu(B(x, r_n))}\cdot \mu \right) 
\]
under the pointed measured Gromov-Hausdorff convergence, with $r_n \searrow 0$, see the discussion in Section~3.

\begin{theorem}\label{thm:minimaltangent} 
Let $E \subset X$ be a set of finite perimeter. Then, for $P(E, \cdot)$-almost every $x \in X$ and for any space 
$(X_\8, d_\8, x_\8, \mu_\8)$ arising as a pointed measured Gromov-Hausdorff tangent
at $x$, any set $(E)_\8$ 
that arises as an asymptotic limit of $E$ along some sequence $r_n \searrow 0$ is a 
$K$-quasiminimizer. Here, $K$ depends quantitatively on the data.
\end{theorem}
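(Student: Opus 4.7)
The plan is to argue by contradiction, combining the asymptotic minimality of $E$ from Theorem~\ref{Asymptotic-FinitePerimeter} with a recovery-sequence construction that transfers hypothetical quasiminimality violations on the tangent space back to $X$. Pick $x$ from the full-$P(E,\cdot)$-measure set where Theorem~\ref{Asymptotic-FinitePerimeter}, Theorem~\ref{thm:Main2}, and Theorem~\ref{thm:main3} all apply, fix a tangent $(X_\8,d_\8,x_\8,\mu_\8)$ with its associated limit set $(E)_\8$ along some sequence $r_n\searrow 0$, and suppose for contradiction that $(E)_\8$ is not $K$-quasiminimal for a constant $K=K(C_d,C_P)$ to be specified below. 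Passing to characteristic functions via Definition~\ref{def:Q-min}, this furnishes a ball $B_{X_\8}(z,R)\subset X_\8$ and a set $F\subset X_\8$ with $F\triangle (E)_\8\Subset B_{X_\8}(z,R)$ such that
\[
P(F,B_{X_\8}(z,R))<K^{-1}\, P((E)_\8,B_{X_\8}(z,R)).
\]

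The key technical step is to build a recovery sequence: sets $F_n\subset X$ coinciding with $E$ outside $B(z_n,r_n(R+\delta))$ for lifts $z_n\to z$ in $Z$ and some small $\delta>0$, together with
\[
\limsup_{n\to\infty} P_n(F_n,B_n(z_n,R+\delta))\le C_1\, P(F,B_{X_\8}(z,R))+\eta
\]
for $C_1=C_1(C_d,C_P)$ and any prescribed $\eta>0$. The construction runs the discrete-convolution template of Theorem~\ref{thm:main3} in reverse: take a maximal $\varepsilon$-separated net $\{z_k\}$ in $X_\8$ with lifts $z_{k,n}\in X_n$, a Lipschitz partition of unity $\{\psi_{k,n}\}$ subordinate to $\{B_n(z_{k,n},2\varepsilon)\}$, and set
\[
v_n^\varepsilon:=\sum_k (\chi_F)_{B_{X_\8}(z_k,\varepsilon)}\, \psi_{k,n}
\]
on $X_n$. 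The Poincar\'e-based estimate from the proof of Theorem~\ref{thm:main3}, combined with Lemma~\ref{lem:consequences of annular decay}, yields $\int_{B_n(z_n,R+\delta)} \lip v_n^\varepsilon\, d\mu_n\le C\, P(F,B_{X_\8}(z,R+O(\varepsilon)))$. Because $\chi_F=\chi_{(E)_\8}$ outside $B_{X_\8}(z,R)$ and Lemma~\ref{lem:consequences of annular decay} identifies $(\chi_{(E)_\8})_{B_{X_\8}(z_k,\varepsilon)}$ with $(\chi_E)_{B_n(z_{k,n},\varepsilon)}$ up to an $o(1)$ error, $v_n^\varepsilon$ matches a discrete convolution of $\chi_E$ on the outer part of the ball; a BV interpolation across a thin annular shell, followed by a coarea-theoretic choice of threshold, produces $F_n$ as a level set satisfying the perimeter bound and the identity $F_n=E$ outside $B(z_n,r_n(R+\delta))$.

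With $F_n$ in hand, the contradiction is arithmetic. Fix $R'>R+\delta+d_\8(x_\8,z)$ so that $B_n(z_n,R+\delta)\subset B_n(x,R')$ for all large $n$ and $\phi_n:=\chi_{F_n}-\chi_E\in BV_c(B_n(x,R'))$. Theorem~\ref{Asymptotic-FinitePerimeter} applied at $x$ and rescaled gives $P_n(F_n,B_n(x,R'))\ge (1-\varepsilon_n)P_n(E,B_n(x,R'))$ with $\varepsilon_n\to 0$. Additivity of the perimeter measure (after selecting $\delta$ from the co-null set of radii with $P_n(E,\partial B_n(z_n,R+\delta))=0=P_n(F_n,\partial B_n(z_n,R+\delta))$) together with $F_n=E$ outside $B_n(z_n,R+\delta)$ rearranges this to
\[
P_n(F_n,B_n(z_n,R+\delta))\ge (1-\varepsilon_n)P_n(E,B_n(z_n,R+\delta))-\varepsilon_n\, P_n(E,B_n(x,R')\setminus B_n(z_n,R+\delta)).
\]
The last term vanishes in the limit since $P_n(E,\cdot)$ is uniformly bounded on $B_n(x,R')$ by~\eqref{eq:bdd-perim-scaled} and $\varepsilon_n\to 0$; taking $\liminf$ and using $P_n(E,\cdot)\overset{*}{\rightharpoonup}\pi_\8$ yields $\liminf_n P_n(F_n,B_n(z_n,R+\delta))\ge \pi_\8(B_{X_\8}(z,R+\delta))$. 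Combining with the recovery bound and the comparability $P((E)_\8,\cdot)\le C_2\, \pi_\8(\cdot)$ from Theorem~\ref{thm:main3} gives
\[
\pi_\8(B_{X_\8}(z,R+\delta))\le C_1K^{-1}P((E)_\8,B_{X_\8}(z,R))+\eta\le C_1C_2K^{-1}\pi_\8(B_{X_\8}(z,R))+\eta.
\]
Since $\pi_\8(B_{X_\8}(z,R))\le \pi_\8(B_{X_\8}(z,R+\delta))$, choosing $K>C_1C_2$ and letting $\eta\to 0$ forces $\pi_\8(B_{X_\8}(z,R))=0$, contradicting the hypothesis (which via Theorem~\ref{thm:main3} implies $\pi_\8(B_{X_\8}(z,R))>0$).

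The main obstacle is the recovery sequence, and specifically the gluing of $v_n^\varepsilon$ to $\chi_E$ outside $B_{X_\8}(z,R)$ without introducing large boundary jumps. A naive sharp cutoff produces a transition term of order $\mu_n(\text{annulus})/\delta$, which by annular decay is only $O(1)$ and fails to vanish. The resolution exploits the identity $\chi_F=\chi_{(E)_\8}$ outside $B_{X_\8}(z,R)$ and Lemma~\ref{lem:consequences of annular decay} to arrange that $v_n^\varepsilon$ and a discrete convolution of $\chi_E$ already agree up to Gromov-Hausdorff error on the gluing region; a BV interpolation together with a coarea selection then absorbs the transition into a bounded multiple of $\pi_\8$ of a shrinking annulus, which is controlled by the data-dependent constant $C_1$.
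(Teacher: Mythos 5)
Your proposal is correct in outline and takes a genuinely different route from the paper. Both proofs share the overall strategy of transferring a tangent-space competitor back to $X$, invoking the asymptotic minimality of Theorem~\ref{Asymptotic-FinitePerimeter}, and closing via the comparability $\pi_\8 \approx P((E)_\8,\cdot)$ from Theorem~\ref{thm:main3} together with the stitching Lemma~\ref{lem:merging}. The difference lies in the transfer step: the paper works directly with an arbitrary $\pip\in BV_c(B_{X_\8}(z,R))$, approximates $\chi_{(E)_\8}+\pip$ by Lipschitz functions with bounded Lipschitz upper gradients converging weakly* to the variation measure (Proposition~\ref{thm:continuousuppergrad}), and lifts those function/upper-gradient pairs to $X_n$ through the Gromov--Hausdorff embeddings (Lemma~\ref{lem:lifting}); you instead argue by contradiction, reduce to a competitor set $F$ via truncation and coarea, and build the recovery sequence by running the discrete-convolution construction of Theorem~\ref{thm:main3} in the reverse direction (coefficients from $X_\8$-ball averages of $\chi_F$, partition of unity on $X_n$). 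Your route is somewhat more elementary, avoiding the Vitali--Carath\'eodory approximation of Proposition~\ref{thm:continuousuppergrad} and the curve-convergence technology of Lemmas~\ref{lem:guppergrad}--\ref{lem:lifting}, at the cost of a worse quasiminimality constant: you pick up the discrete-convolution factor $C_1$ on top of the comparability constant $C_2$, whereas the paper's lifting transfers energy with no loss and obtains $K$ equal to the comparability constant alone. You correctly identify the gluing as the crux; the two estimates that need verification in your iterated limit are (i) the $L^1$-error term $\delta^{-1}\int_{\text{annulus}}|\chi_E-v_n^\varepsilon|\,d\mu_n$, which a Poincar\'e estimate for $\chi_E$ together with Lemma~\ref{lem:consequences of annular decay} shows is $O(\varepsilon/\delta)$ (so $\varepsilon$ must be sent to zero before $\delta$), and (ii) the extra term $P_n(E,\,B_n(z_n,R+\delta)\setminus\overline{B_n(z_n,R)})$ contributed by Lemma~\ref{lem:merging}, which you omit but which tends to $\pi_\8$ of the closed annulus and thus vanishes as $\delta\to0$ once $R$ is chosen (as you may, generically) with $\pi_\8(\partial B_{X_\8}(z,R))=0$.
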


The proof involves lifting Lipschitz functions with small energy to the sequence, and a pasting argument. The desired quasiminimality estimate then follows using Theorem~\ref{Asymptotic-FinitePerimeter} for the lifted sequence. 
We need the following general BV approximation theorem, which is an analog of \cite[Lemma 5.2]{Che} for $p=1$.
We follow the arguments of Cheeger.

\begin{proposition}\label{thm:continuousuppergrad} 
Let $f \in BV(X)$. Then, there exist Lipschitz continuous $f_i$ with bounded Lipschitz continuous upper gradients $v_i$ such that $f_i \to f$ in  $L^1_{\loc}(X)$ and
$v_i \, d\mu \overset{*}{\rightharpoonup} dV(f,\cdot)$.

\end{proposition}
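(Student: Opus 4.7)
The approach is to adapt the discrete-convolution construction from Cheeger~\cite{Che}, already used in the proof of Theorem~\ref{thm:main3}, so that both the approximating function and its upper gradient are Lipschitz continuous.

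First, by the definition of $V(f,\cdot)$ together with the equivalence of the $\lip u_i$ and $g_{u_i}$ formulations recorded in Section~2 (via~\cite{AmbDiM}) and a diagonal extraction, choose Lipschitz $u_i\in \Lip(X)$ with $u_i\to f$ in $L^1_\loc(X)$ and $g_{u_i}\,d\mu\overset{*}{\rightharpoonup}dV(f,\cdot)$. Tightness of masses is forced by combining the BV-energy bound built into the chosen $u_i$ with the lower semicontinuity of the total variation.

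Next, fix scales $\epsilon_i\searrow 0$. For each $i$, take a maximal $\epsilon_i$-separated set $\{z_j^i\}$ in $X$ and a Lipschitz partition of unity $\{\phi_j^i\}$ subordinate to $\{B(z_j^i,2\epsilon_i)\}$ with $\Lip \phi_j^i\le C/\epsilon_i$. Define
\[
f_i(y):=\sum_j(u_i)_{B(z_j^i,\epsilon_i)}\,\phi_j^i(y).
\]
The telescoping argument from the proof of Theorem~\ref{thm:main3}, combined with the $1$-Poincar\'e inequality, yields
\[
\Lip f_i(y)\le C\vint_{B(y,C\epsilon_i)}g_{u_i}\,d\mu,\qquad y\in X.
\]
To upgrade this bound to a Lipschitz continuous dominant, set
\[
v_i(y):=C\sum_l\psi_l^i(y)\vint_{B(z_l^i,C\epsilon_i)}g_{u_i}\,d\mu,
\]
where $\psi_l^i$ is a Lipschitz cut-off equal to $1$ on $B(z_l^i,2\epsilon_i)$, supported in $B(z_l^i,3\epsilon_i)$, with $\Lip\psi_l^i\le C/\epsilon_i$. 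The bounded overlap of these enlarged balls makes $v_i$ Lipschitz (Lipschitz constant depending on $i$ through $\|g_{u_i}\|_\infty$ and $\epsilon_i$) and bounded, and by construction $v_i\ge \Lip f_i$ pointwise, so $v_i$ is an upper gradient of $f_i$.

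The convergence $f_i\to f$ in $L^1_\loc(X)$ follows routinely from $u_i\to f$ together with the Lebesgue differentiation theorem. The weak* convergence $v_i\,d\mu\overset{*}{\rightharpoonup}dV(f,\cdot)$ is the technical heart of the argument. An upper bound $\limsup_i\int\phi\, v_i\,d\mu\le \int\phi\,dV(f,\cdot)$ for $\phi\in C_c(X)$ nonnegative follows by expanding $v_i$, invoking the bounded overlap of the $\{B(z_l^i,3\epsilon_i)\}$, and using the chosen weak* convergence of $g_{u_i}\,d\mu$; the matching lower bound comes from the lower semicontinuity of total variation applied to $f_i\to f$ together with the fact that each $v_i$ is an upper gradient of $f_i$. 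The main obstacle is that the partition-of-unity smoothing naturally introduces a multiplicative constant in the pointwise dominant of $\Lip f_i$; to ensure that the weak* limit is precisely $dV(f,\cdot)$ rather than merely comparable to it, one must tune $\epsilon_i\to 0$ carefully and rely on the initial choice $g_{u_i}\,d\mu\overset{*}{\rightharpoonup}dV(f,\cdot)$ to absorb the overlap constants in the limit, mirroring Cheeger's treatment of Lemma~5.2 in~\cite{Che}.
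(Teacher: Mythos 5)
Your construction does not establish the precise weak* convergence $v_i\,d\mu\overset{*}{\rightharpoonup}dV(f,\cdot)$, and the issue you flag at the end is a genuine gap, not a technical detail to be tuned away. The telescoping argument gives only $\Lip f_i(y)\le C\vint_{B(y,C\epsilon_i)}g_{u_i}\,d\mu$ with $C$ coming from the doubling and Poincar\'e constants; this $C$ does \emph{not} tend to $1$ as $\epsilon_i\to 0$. When you integrate your $v_i$ against a nonnegative $\phi\in C_c(X)$, the bounded overlap of the $B(z_l^i,3\epsilon_i)$ and the factor $C$ produce $\limsup_i\int\phi\,v_i\,d\mu\le C'\int\phi\,dV(f,\cdot)$ with $C'>1$ depending on the data, while lower semicontinuity of the total variation gives only the one-sided inequality $\int\phi\,dV(f,\cdot)\le\liminf_i\int\phi\,v_i\,d\mu$. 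These bounds are not matching; they give weak* subsequential limits \emph{comparable} to $dV(f,\cdot)$, not equal to it. No choice of $\epsilon_i\searrow 0$ removes the Poincar\'e constant.

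The paper circumvents this by approximating the upper gradient in $L^1$ (additively, not multiplicatively): it first takes Lipschitz $f_k$ with $\lip f_k\,d\mu\overset{*}{\rightharpoonup}dV(f,\cdot)$ (Lemma~\ref{lem:conv-of-measures}), then replaces $\lip f_k$ by a lower semicontinuous majorant $g_k'$ via the Vitali--Carath\'eodory theorem with $\|g_k'-\lip f_k\|_{L^1}\to 0$, and finally uses the $\mathcal{F}_g$-based McShane construction of Lemma~\ref{lem:lsc-to-Lipschitz} to produce Lipschitz functions $v_k$ increasing pointwise to $g_k'+k^{-1}$, together with Lipschitz $F_k$ having $v_k$ as an upper gradient. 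Because the replacement is from above in $L^1$ rather than up to a multiplicative constant, the weak* limit is exactly $dV(f,\cdot)$. If you want to rescue your approach, you would need an averaging/smoothing step whose dominant converges to $g_{u_i}$ in $L^1$ (not merely up to a constant), which the discrete convolution does not provide; the $\mathcal{F}_g$ construction is the device that achieves this.
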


To prove this proposition we define the following auxiliary function. 
For a nonnegative Borel function $g$ on $X$ we set $\mathcal{F}_g\colon X \times X \to [0,\infty]$ to be
\[
\mathcal{F}^X_g(x_1,x_2):=\mathcal{F}_g(x_1, x_2) := \inf_{\gamma} \int_{\gamma} g ~ds,
\]
whenever $x_1,x_2 \in X$. If $x_1=x_2$, we set $\mathcal{F}_g(x_1, x_2) =0$. The infimum is taken over all rectifiable curves $\gamma$ connecting $x_1$ to $x_2$. 
 Note that by the definition of upper gradient~\eqref{eq:upper-grad}, we have that if $g$ is an upper gradient of a 
 function $f\colon X \to \R$, then for every $x,y \in X$,
\[
|f(x)-f(y)| \leq \mathcal{F}_{g}(x,y).
\]
For the proof of the following lemma see~\cite[Lemma 5.18]{Che} or~\cite[pp.~13--14]{HeK}.

\begin{lemma}\label{lem:guppergrad} 
Fix $\eta>0$. Let $g\colon X \to [\eta, \infty)$ be a countably valued lower 
semicontinuous function. Then for $g_n\ge \eta$ an increasing sequence of Lipschitz continuous functions on $X$ converging 
pointwise  $g_n \nearrow g$, we have that for every $x,y \in X$,
\[
\mathcal{F}_{g}(x,y) = \lim_{n \to \infty} \mathcal{F}_{g_n}(x,y).
\]
Moreover, such a sequence $g_n$ exists.
\end{lemma}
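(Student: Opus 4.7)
The plan is to construct the Lipschitz sequence explicitly and then handle the two inequalities $\limsup_n \mathcal{F}_{g_n}(x,y) \leq \mathcal{F}_g(x,y)$ and $\liminf_n \mathcal{F}_{g_n}(x,y) \geq \mathcal{F}_g(x,y)$ separately.

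For the existence, I would use the Moreau--Yosida (infimal convolution) regularization
\[
g_n(x) := \inf_{y \in X}\bigl[g(y) + n\, d(x,y)\bigr].
\]
Each $g_n$ is $n$-Lipschitz because the infimand is $n$-Lipschitz in $x$ uniformly in $y$, and taking $y = x$ gives $g_n \leq g$ pointwise; the lower bound $g \geq \eta$ passes directly to $g_n \geq \eta$. The sequence is monotone increasing in $n$ by construction. To see $g_n(x) \nearrow g(x)$, suppose the increasing limit equals $c < g(x)$, pick $y_n \in X$ with $g(y_n) + n\, d(x,y_n) \leq c + 1/n$; since $g \geq \eta > 0$ this forces $d(x,y_n) \to 0$, and then lower semicontinuity of $g$ yields $\liminf_n g(y_n) \geq g(x) > c$, a contradiction. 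The countably-valued hypothesis plays no role here, but becomes relevant in applying this lemma inside Proposition~\ref{thm:continuousuppergrad}.

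The inequality $\mathcal{F}_{g_n}(x,y) \leq \mathcal{F}_g(x,y)$ is immediate from $g_n \leq g$. For the reverse direction, I would assume $L := \lim_n \mathcal{F}_{g_n}(x,y) < \infty$ (otherwise there is nothing to show), fix $\epsilon > 0$, and for each $n$ choose a rectifiable curve $\gamma_n$ from $x$ to $y$ with $\int_{\gamma_n} g_n\, ds \leq \mathcal{F}_{g_n}(x,y) + \epsilon$. The bound $g_n \geq \eta$ gives the uniform length control $\ell(\gamma_n) \leq (L+\epsilon)/\eta =: T$. Parametrizing each $\gamma_n$ by arc length and extending it constantly beyond its length to the common interval $[0,T]$ produces a sequence of $1$-Lipschitz maps into a bounded subset of the proper space $X$. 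An application of the Arzel\`a--Ascoli theorem lets me pass to a subsequence converging uniformly to a $1$-Lipschitz curve $\gamma\colon [0,T]\to X$ with $\gamma(0) = x$ and $\gamma(T) = y$.

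For any fixed $m\le n$, the monotonicity $g_m \le g_n$ yields $\int_{\gamma_n} g_m\, ds \leq L + \epsilon$, and since $g_m$ is Lipschitz, lower semicontinuity of the line integral under uniform convergence of arc-length-parametrized curves (the classical argument from~\cite[Lemma~5.18]{Che} or~\cite{HeK}) gives
\[
\int_{\gamma} g_m\, ds \leq \liminf_{n\to\infty} \int_{\gamma_n} g_m\, ds \leq L + \epsilon.
\]
Since $\eta \leq g_m \nearrow g$, monotone convergence produces $\int_{\gamma} g\, ds \leq L + \epsilon$, hence $\mathcal{F}_g(x,y) \leq L + \epsilon$, and letting $\epsilon \to 0$ finishes the argument. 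The main obstacle is the lower semicontinuity of line integrals along uniformly convergent curves: arc length is only lower semicontinuous under uniform convergence, so the length $\ell(\gamma)$ may be strictly less than $\lim_n \ell(\gamma_n)$, and one must carefully pad the parametrizations to a fixed interval as above and invoke continuity of the Lipschitz integrand $g_m$ to obtain the necessary passage to the limit.
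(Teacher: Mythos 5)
Your proposal is correct and follows essentially the same route as the sources \cite[Lemma~5.18]{Che} and \cite[pp.~13--14]{HeK} that the paper cites in lieu of providing its own proof: the Moreau--Yosida infimal convolution for the existence of the approximating sequence, the trivial inequality $\mathcal{F}_{g_n}\le\mathcal{F}_g$ from $g_n\le g$, and Arzel\`a--Ascoli on near-optimal curves combined with lower semicontinuity of the line integral (the content of Lemma~\ref{lem:semicontcurv}) and monotone convergence for the reverse inequality. Your observation that the countably-valued hypothesis is not actually used in the proof of this particular lemma is also accurate.
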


\begin{lemma}\label{lem:conv-of-measures}
Let $f\in BV(X)$. Then there is a sequence of Lipschitz functions $f_k$ on $X$ such that $f_k\to f$ in $L^1_{\loc}(X)$ and
$g_k\, d\mu\overset{*}{\rightharpoonup} dV(f,\cdot)$. Here $g_k=\lip f_k$ is an upper gradient of $f_k$.
\end{lemma}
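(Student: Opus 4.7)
The plan is to leverage Proposition \ref{thm:continuousuppergrad} and show that the Lipschitz approximants $f_k$ provided there already satisfy the stronger conclusion with $g_k:=\lip f_k$. So first I would invoke Proposition \ref{thm:continuousuppergrad} to obtain Lipschitz functions $f_k$ together with bounded Lipschitz continuous upper gradients $v_k$ such that $f_k\to f$ in $L^1_{\loc}(X)$ and $v_k\,d\mu \overset{*}{\rightharpoonup} dV(f,\cdot)$. The key point is then a pointwise domination: $\lip f_k(x)\le v_k(x)$ for every $x\in X$. This comes from the geodesic hypothesis, for given $y\in B(x,r)$ a geodesic from $x$ to $y$ is contained in $\overline B(x,r)$, and the upper-gradient inequality gives
\[
|f_k(x)-f_k(y)| \le d(x,y)\sup_{B(x,r)} v_k.
\]
Dividing by $r$, taking the supremum over $y\in B(x,r)$, and then $\liminf_{r\to 0}$ while using continuity of $v_k$ yields $\lip f_k(x)\le v_k(x)$.

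With this pointwise bound in hand, the measures $\nu_k:=\lip f_k\,d\mu$ are dominated by $v_k\,d\mu$ and hence locally uniformly bounded in total mass. By the standard weak$^*$ compactness for Radon measures on a proper metric space, every subsequence of $\nu_k$ has a further subsequence converging weak$^*$ to some Radon measure $\sigma$. For any nonnegative continuous compactly supported $\phi$,
\[
\int \phi\,d\sigma = \lim_k \int \phi\,\lip f_k\,d\mu \le \lim_k \int \phi\,v_k\,d\mu = \int \phi\,dV(f,\cdot),
\]
so $\sigma\le dV(f,\cdot)$ as measures.

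For the reverse inequality I would appeal to the variational definition of $V(f,\cdot)$ itself: for every open $W$,
\[
V(f,W) \le \liminf_k \int_W \lip f_k\,d\mu = \liminf_k \nu_k(W),
\]
since $f_k\to f$ in $L^1_{\loc}(W)$. Testing against open $W$ satisfying $\sigma(\partial W)=0$ (available in a sufficiently rich generating family, e.g.\ almost every concentric ball), weak$^*$ convergence gives $\nu_k(W)\to \sigma(W)$, whence $\sigma(W)\ge V(f,W)$. Combined with $\sigma\le dV(f,\cdot)$ from the previous step, this forces $\sigma(W)=V(f,W)$ on the generating family, and therefore $\sigma=dV(f,\cdot)$ as Radon measures by outer regularity. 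Since every weak$^*$ subsequential limit equals $dV(f,\cdot)$, the full sequence $\nu_k$ converges weak$^*$ to $dV(f,\cdot)$.

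The main obstacle is the first step, i.e.\ upgrading the integral-along-curves inequality encoded in the upper gradient $v_k$ to the pointwise bound $\lip f_k\le v_k$; it is precisely here that the geodecity of $X$ and the continuity of $v_k$ furnished by Proposition \ref{thm:continuousuppergrad} are used decisively. The remaining arguments are a standard weak$^*$-compactness and variational-definition routine, combining an upper bound from pointwise domination with a lower bound from lower semicontinuity to pin down the limit measure.
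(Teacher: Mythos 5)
Your proposal is circular: you invoke Proposition~\ref{thm:continuousuppergrad} to obtain the approximating pair $(f_k, v_k)$, but in this paper Proposition~\ref{thm:continuousuppergrad} is proved \emph{from} Lemma~\ref{lem:conv-of-measures} (its proof begins ``By Lemma~\ref{lem:conv-of-measures} we obtain a sequence $f_k$ \dots''), after which Lemma~\ref{lem:lsc-to-Lipschitz} and the Vitali--Carath\'eodory theorem are used to upgrade $\lip f_k$ to bounded Lipschitz-continuous upper gradients. So you cannot use the Proposition to prove the Lemma; the ordering of the logic is precisely the reverse of what you propose. This is a genuine gap, not merely a stylistic difference.

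Apart from the circularity, the analytic content of your argument is sound: in a geodesic space the pointwise domination $\lip f_k \le v_k$ does follow from continuity of the upper gradient $v_k$ and the upper-gradient inequality along short geodesics, and the portmanteau-type sandwich (upper bound on closed sets via domination, lower bound on open sets via lower semicontinuity of $V(f,\cdot)$) would indeed pin down the limit. But the paper's own proof is both shorter and independent of the Proposition: it simply takes Lipschitz $f_k$ with $g_k=\lip f_k$ and $\int_X g_k\,d\mu\to V(f,X)$ (available directly from the definition of $V(f,X)$), then obtains $V(f,U)\le\liminf_k\int_U g_k\,d\mu$ for open $U$ from the definition again, and derives $\limsup_k\int_F g_k\,d\mu\le V(f,F)$ for closed $F$ by applying the same inequality to $X\setminus F$ and subtracting from $\lim_k\int_X g_k\,d\mu=V(f,X)$; the standard characterization of weak$^*$ convergence then closes the argument without any pointwise domination or continuous upper gradients. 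To repair your proof you would need to establish the existence of the continuous upper gradients $v_k$ by some route that does not pass through Lemma~\ref{lem:conv-of-measures}, which is exactly what the paper avoids by proving the Lemma first.
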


\begin{proof}
By the definition of the total variation we can find a sequence of locally 
Lipschitz functions $f_k$ and upper gradients $g_k=\lip f_k$ such that $f_k\to f$
in $L^1_{\loc}(X)$ and $\lim_k\int_X g_k\, d\mu=V(f,X)$.
Multiplying with suitable cutoff functions if necessary, we can
assume that the $f_k$ are Lipschitz.
For any open set $U\subset X$, we have by the definition of the total variation that
\begin{equation}\label{eq:weak convergence open set}
V(f,U)\le \liminf_{k\to\infty}\int_U g_k\,d\mu.
\end{equation}
On the other hand, for any closed set $F\subset X$ we have
\[
V(f,X) = \lim_{k\to\infty}\int_{X}g_k\,d\mu
  \ge \limsup_{k\to\infty}\int_{F}g_k\,d\mu
  +\liminf_{k\to\infty}\int_{X\setminus F}g_k\,d\mu
  \ge \limsup_{k\to\infty}\int_{F}g_k\,d\mu+V(f,X\setminus F),
\]
where the last inequality again follows by the definition of the total variation.
Thus
\[
\limsup_{k\to\infty}\int_{F}g_k\,d\mu\le V(f,F).
\]
According to a standard characterization of the weak* convergence of Radon measures,
see e.g. \cite[p. 54]{EG}, the above inequality and \eqref{eq:weak convergence open set} together 
give $g_k\, d\mu\overset{*}{\rightharpoonup} dV(f,\cdot)$.
%
%
\end{proof}

\begin{lemma}\label{lem:lsc-to-Lipschitz}
Let $f$ 
be a nonnegative Lipschitz function on $X$ and $g\in L^1_{\loc}(X)$ a bounded 
countably valued
lower semicontinuous upper gradient of $f$. 
Suppose that there is a  $\tau>0$ such that
$g\ge \tau$ on $X$. Then there is a sequence $f_k$ of Lipschitz continuous functions on $X$ with 
$f_k\to f$ in $L^1_{\loc}(X)$ and bounded 
Lipschitz continuous upper gradients $g_k$ of $f_k$ such that $g_k\to g$ in $L^1_{\loc}(X)$ and $g_k$ 
monotone increases to $g$ everywhere on $X$, 
and $g_k\ge \tau$ for each $k$.
\end{lemma}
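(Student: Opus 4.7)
My plan is to construct the $f_k$ by an infimal convolution against the path pseudometrics $\mathcal{F}_{g_k}$ obtained from a Lipschitz approximation of $g$. The approximating $g_k$ are supplied directly by Lemma~\ref{lem:guppergrad}: it gives a monotone increasing sequence of Lipschitz functions $g_k\ge\tau$ with $g_k\nearrow g$ pointwise on $X$ and $\mathcal{F}_{g_k}(x,y)\to\mathcal{F}_g(x,y)$ for every $x,y\in X$. Since $g$ is assumed bounded, say by $M$, the relation $g_k\le g$ forces the $g_k$ to be uniformly bounded by $M$; in particular they are bounded Lipschitz continuous functions as required by the conclusion. The $L^1_{\loc}$ convergence $g_k\to g$ is then immediate from the dominated convergence theorem.

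For the function side I would set
\[
f_k(x):=\inf_{y\in X}\bigl[f(y)+\mathcal{F}_{g_k}(x,y)\bigr].
\]
Because $X$ is geodesic and $g_k\le M$, we have $\mathcal{F}_{g_k}(x,y)\le M\,d(x,y)$, so a standard argument shows $f_k$ is $M$-Lipschitz and that $g_k$ is an upper gradient of $f_k$ (any curve $\gamma$ from $x_1$ to $x_2$ majorizes $\mathcal{F}_{g_k}(x_1,x_2)$). Taking $y=x$ in the infimum gives $0\le f_k(x)\le f(x)$, and the monotonicity $g_k\le g_{k+1}$ implies $\mathcal{F}_{g_k}\le\mathcal{F}_{g_{k+1}}$, hence $f_k(x)\nearrow$ to some limit $L(x)\le f(x)$.

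The main obstacle is showing $L\equiv f$ pointwise, since this amounts to swapping $\sup_k\inf_y$ with $\inf_y\sup_k$. The input that makes this possible is the strict lower bound $g_k\ge\tau>0$: any near-minimizer $y_k$ for $f_k(x_0)$ must satisfy $\tau\, d(x_0,y_k)\le\mathcal{F}_{g_k}(x_0,y_k)\le f(x_0)+1$, so the $y_k$ lie in a fixed bounded, hence (by completeness and doubling) relatively compact, set. If $L(x_0)<f(x_0)$ strictly, extract a convergent subsequence $y_k\to y_*$ and estimate, using that $g_k\le M$,
\[
\mathcal{F}_{g_k}(x_0,y_k)\ge \mathcal{F}_{g_k}(x_0,y_*)-M\,d(y_k,y_*).
\]
Passing to the limit and invoking Lemma~\ref{lem:guppergrad} gives $\liminf_k\mathcal{F}_{g_k}(x_0,y_k)\ge\mathcal{F}_g(x_0,y_*)$, and the continuity of $f$ then yields
\[
\liminf_k\bigl[f(y_k)+\mathcal{F}_{g_k}(x_0,y_k)\bigr]\ge f(y_*)+\mathcal{F}_g(x_0,y_*)\ge f(x_0),
\]
where the final inequality uses that $g$ itself is an upper gradient of $f$. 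This contradicts $f_k(x_0)<f(x_0)-\eps$ for large $k$, so $L\equiv f$.

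Once pointwise (indeed monotone) convergence $f_k\nearrow f$ is established, the $L^1_{\loc}$ convergence follows from dominated convergence against the locally integrable dominant $f$, and all asserted properties of the pair $(f_k,g_k)$ are in place. I anticipate no further difficulty beyond the swap-of-limits step, which is precisely where the hypotheses $g\ge\tau>0$ and $g$ bounded (equivalently, the uniform two-sided pinch on $g_k$) are essential.
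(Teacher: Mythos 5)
Your proof is correct and follows essentially the same strategy as the paper: approximate $g$ by an increasing sequence of Lipschitz $g_k\ge\tau$ via Lemma~\ref{lem:guppergrad}, form the McShane-type infimal convolution with $\mathcal{F}_{g_k}$, and establish pointwise convergence $f_k\to f$ through the compactness of the near-minimizers (guaranteed by the lower bound $g_k\ge\tau$) together with the pointwise convergence $\mathcal{F}_{g_k}\to\mathcal{F}_g$ from Lemma~\ref{lem:guppergrad}. The one difference from the paper's argument is that you infimize over all of $X$, whereas the paper infimizes over a nested exhaustion of finite $1/k$-nets $A_k$ and then extends the pointwise identity $f_k\to f$ from the dense net points to all of $X$ by the uniform Lipschitz bound; your route avoids that final density step and gives $f_k\le f$ directly at every point, which is a mild streamlining rather than a genuinely different approach.
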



\begin{proof}
Since $g$ is lower semicontinuous,
we can find a sequence of Lipschitz continuous functions $g_k\ge \tau$ on $X$ such that 
$g_k\to g$ in $L^1_{\loc}(X)$ and in addition $g_k\le g_{k+1}\le g$ on $X$ for each $k\in\N$.
By Lemma~\ref{lem:guppergrad}  we know that $\mathcal{F}_g=\lim_k\mathcal{F}_{g_k}$ pointwise everywhere on $X\times X$.

Next, we fix $x_0\in X$ and
for each positive integer $i$ let $\widehat{A_i}$ be a maximal $1/i$-net of $X$ such that 
$\widehat{A_i}\subset \widehat{A_{i+1}}$ for each $i\in\N$, and
let $A_i=\widehat{A_i}\cap B(x_0,2i)$. Then $A_i\subset A_{i+1}$, and
by the doubling property of $\mu$ we know that $A_i$ is a finite set for each $i$.
As $g$ is bounded, we can also ensure that 
each $g_k\le M$ and $g\le M$ on $X$ for some positive $M$. Therefore for each $y\in X$ we 
know that $\mathcal{F}_g$ and $\mathcal{F}_{g_k}$ are $MC$-Lipschitz where $C$ is the quasiconvexity constant of $X$. Now,
taking inspiration from the McShane extension (see also~\cite{Che}), we set
\[
f_k(x):=\inf\{f(y)+\mathcal{F}_{g_{k}}(x,y)\, :\, y\in A_k\}.
\]
Then $f_k$ is also $MC$-Lipschitz on $X$.  A standard argument
(see e.g. \cite[p. 384]{HKST})
shows that $g_{k}$ is an upper gradient of $f_k$. 

For $x\in \bigcup_nA_n$, we choose $n\in\N$ such that $x\in A_n$; then for $k\ge n+1$ we see that $x\in A_k$. It follows that
$f_k(x)\le f(x)$. If $y\in X\setminus B(x,L)$ for some $L>0$ then as $f$ is nonnegative, 
$f(y)+\mathcal{F}_{g_{k}}(x,y)\ge L\tau$; thus to 
obtain $f_k(x)$ it suffices to look only at $y\in A_{k}\cap B(x,L)$ where $L=[1+f(x)]/\tau$. Let $y_k\in A_k\cap B(x,L)$ such that
\[
k^{-1}+f_k(x)\ge f(y_k)+\mathcal{F}_{g_{k}}(x,y_k).
\]
Then the sequence $(y_k)$ lies in the compact set $\overline{B}(x,L)$ and hence has a subsequence $y_{k_j}$ converging to
some $y_\8\in\overline{B}(x,L)$. Thus $f(x)\ge \lim_{k\to\infty}f_k(x)\ge f(y_\8)+\lim_{k\to\infty}\mathcal{F}_{g_{k}}(x,y_k)$.
Observe that
\[
|\mathcal{F}_{g_{k}}(x,y_k)-\mathcal{F}_{g_{k}}(x,y_\8)|\le MC\, d(y_k,y_\8).
\]
It then follows from Lemma~\ref{lem:guppergrad} that 
\[
f(x)\ge \lim_{k\to\infty}f_k(x)\ge  f(y_\8)+\lim_{k\to\infty}\mathcal{F}_{g_{k}}(x,y_\8)=f(y_\8)+\mathcal{F}_g(x,y_\8)\ge f(x),
\]
and it then follows that $\lim_{k\to\infty} f_k(x)=f(x)$. Now the uniform Lipschitz continuity of $f_k$, $k\in\N$ and $f$ shows that
$\lim_kf_k=f$ pointwise on $X$. An appeal to the Lebesgue dominated convergence theorem 
(and the fact that $f_k\le \Vert f\Vert_{L^\infty(B)}+ Mk<\infty$
on the ball $B=B(x_0,k)$)
yields the convergence also in $L^1_{\loc}(X)$.
\end{proof}

The above lemmas allow us now to prove Proposition~\ref{thm:continuousuppergrad}.

\begin{proof}[Proof of Proposition~\ref{thm:continuousuppergrad}]
By Lemma~\ref{lem:conv-of-measures} we obtain a sequence $f_k$ of
Lipschitz
functions on $X$ with $f_k\to f$ in $L_{\loc}^1(X)$ and 
upper gradients $g_k=\lip f_k$ of $f_k$ such that $g_k\, d\mu\overset{*}{\rightharpoonup} dV(f,\cdot)$. 
Note that each
$g_k$ is bounded.
By the Vitali-Carath\'eodory theorem, see e.g. \cite[p. 108]{HKST},
for each $k$ we can find a bounded countably valued lower semicontinuous 
function $g_k^\prime\ge g_k$ such that $\Vert g_k^\prime-g_k\Vert_{L^1(X)}\to 0$ as $k\to\infty$. Note that automatically $g_k^\prime$ is
also an upper gradient of $f_k$. Moreover, we now have
$g_k^\prime\, d\mu\overset{*}{\rightharpoonup} dV(f,\cdot)$, and so 
we also have $[g_k^\prime+k^{-1}]d\mu\overset{*}{\rightharpoonup} dV(f,\cdot)$. 

Next we apply Lemma~\ref{lem:lsc-to-Lipschitz} to obtain
bounded Lipschitz functions $v_k$ and Lipschitz functions $F_k$ such that
$v_k$ is an upper gradient of $F_k$, $F_k\to f$ in $L^1_{\loc}(X)$, and $v_k-[g_k^\prime+k^{-1}] \to 0$ in $L^1_{\loc}(X)$
as 
$k\to\infty$. It follows then also that
$v_k\, d\mu\overset{*}{\rightharpoonup} dV(f,\cdot)$, completing the proof of the proposition. 
\end{proof}


We will need the following lemma from Keith~\cite[Proposition 4]{Kei03}, see
also~\cite[proof of Proposition~2.17]{HeK}. 
This lemma is a simple consequence of the Arzel\`a-Ascoli theorem together with the lower semicontinuity of $g$.
In the lemmas below we will consider curves
to be arc length parametrized.

\begin{lemma}{{\rm\bf \cite[Proposition~4]{Kei03}}}\label{lem:semicontcurv} 
 If $Z$ is a proper space, $g \colon Z \to \R$ a nonnegative lower semicontinuous function, $L>0$, and 
 $\gamma_n$ a sequence of curves in $Z$ with length at most $L$ and are contained in a fixed compact subset of $Z$, then there 
 exists a rectifiable curve $\gamma_{\8}$ so that a subsequence of $\gamma_n$ converges to $ \gamma_\8$  
 uniformly. For such $\gamma_\8$ we also have that 
\[
\int_{\gamma_\8} g \, ds \leq \liminf_{n \to \infty} \int_{\gamma_n} g\, ds.
\]
\end{lemma}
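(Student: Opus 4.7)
The plan is to first establish compactness by reparametrizing all the curves onto the common domain $[0,1]$ and applying the Arzel\`a-Ascoli theorem, and then to prove the lower semicontinuity of the line integral first for continuous bounded test functions and finally for $g$ by approximating it monotonically from below by Lipschitz functions. To begin, I would reparametrize each $\gamma_n$ as $\widetilde\gamma_n\colon[0,1]\to Z$, $\widetilde\gamma_n(t):=\gamma_n(\ell_n t)$, where $\ell_n:=\ell(\gamma_n)\le L$. Each $\widetilde\gamma_n$ is $\ell_n$-Lipschitz (hence $L$-Lipschitz), and every $\widetilde\gamma_n$ takes values in a fixed compact set $K\subset Z$. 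Arzel\`a-Ascoli then supplies a subsequence (not relabeled) with $\widetilde\gamma_n\to\widetilde\gamma_\8$ uniformly on $[0,1]$. Passing to a further subsequence we may assume also $\ell_n\to\ell_\8\in[0,L]$, and taking limits in $d(\widetilde\gamma_n(s),\widetilde\gamma_n(t))\le \ell_n|s-t|$ shows that $\widetilde\gamma_\8$ is $\ell_\8$-Lipschitz, hence rectifiable with length at most $\ell_\8$. I then let $\gamma_\8$ be the arc-length reparametrization of $\widetilde\gamma_\8$ (in the degenerate case that $\widetilde\gamma_\8$ is constant the target inequality is trivial).

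Next, for a continuous bounded nonnegative function $h$ on $Z$, the invariance of the line integral under reparametrization gives
\begin{equation*}
\int_{\gamma_n} h\,ds=\int_0^1 h(\widetilde\gamma_n(t))\,\ell_n\,dt,\qquad \int_{\gamma_\8} h\,ds=\int_0^1 h(\widetilde\gamma_\8(t))\,|\dot{\widetilde\gamma}_\8(t)|\,dt,
\end{equation*}
where $|\dot{\widetilde\gamma}_\8(\cdot)|$ is the metric derivative. The uniform convergence of $\widetilde\gamma_n$ on the compact set $K$, together with the uniform continuity of $h$ on $K$, makes $h\circ\widetilde\gamma_n\to h\circ \widetilde\gamma_\8$ uniformly on $[0,1]$; combined with $\ell_n\to\ell_\8$ this gives $\lim_n\int_{\gamma_n}h\,ds=\ell_\8\int_0^1 h(\widetilde\gamma_\8(t))\,dt$. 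Since $|\dot{\widetilde\gamma}_\8|\le \ell_\8$ almost everywhere and $h\ge 0$, the latter dominates $\int_{\gamma_\8}h\,ds$, yielding the inequality for such $h$.

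To pass from continuous bounded test functions to the lower semicontinuous $g$, I would use the standard fact that on a metric space every nonnegative lower semicontinuous function is the pointwise increasing limit of a sequence of bounded Lipschitz functions, for example the inf-convolutions $h_k(x):=\min\{k,\inf_{y\in Z}[g(y)+k\,d(x,y)]\}$. Monotone convergence together with the previous step then gives
\begin{equation*}
\int_{\gamma_\8}g\,ds=\sup_k\int_{\gamma_\8}h_k\,ds\le \sup_k\liminf_n\int_{\gamma_n}h_k\,ds\le \liminf_n\int_{\gamma_n}g\,ds,
\end{equation*}
where the final step uses $h_k\le g$, and this completes the proof.

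The main obstacle I expect is the potential \emph{drop} in length under uniform convergence: $\ell(\widetilde\gamma_\8)$ may be strictly less than $\ell_\8=\lim\ell_n$, so one cannot hope for equality of the integrals, only one-sided control. Working on the fixed parameter interval $[0,1]$ with constant speeds $\ell_n$ bypasses this cleanly, because the a.e.\ bound $|\dot{\widetilde\gamma}_\8|\le \ell_\8$ together with $g\ge 0$ produces exactly the one-sided estimate needed.
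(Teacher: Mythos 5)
Your proof is correct and fills in the argument that the paper only sketches in one sentence, citing~\cite[Proposition~4]{Kei03} and remarking that the lemma ``is a simple consequence of the Arzel\`a--Ascoli theorem together with the lower semicontinuity of $g$.'' Your route is exactly this: constant-speed reparametrization onto $[0,1]$ to get a uniformly Lipschitz family, Arzel\`a--Ascoli for compactness, the metric-derivative bound $|\dot{\widetilde\gamma}_\8|\le\ell_\8$ a.e.\ to absorb possible length drop, and monotone approximation of the lower semicontinuous $g$ from below by bounded Lipschitz functions via inf-convolution. All of these steps are sound, and your observation that the one-sided inequality comes precisely from the a.e.\ bound on the metric derivative is the crux of the argument.

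One small point worth tightening: after passing to the Arzel\`a--Ascoli subsequence, your chain of inequalities delivers $\int_{\gamma_\8} g\,ds \le \liminf_k \int_{\gamma_{n_k}} g\,ds$, i.e.\ the liminf along the \emph{subsequence}. To obtain the inequality against $\liminf_{n}\int_{\gamma_n}g\,ds$ over the full original sequence (which is what the statement asserts), one should first extract a subsequence along which $\int_{\gamma_n}g\,ds$ converges to its liminf, and only then apply Arzel\`a--Ascoli to get a further uniformly convergent subsequence; this is routine and does not affect any of your subsequent estimates. With that ordering made explicit, the argument is complete and matches the approach the paper references.
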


As a corollary, we obtain the following.

\begin{lemma}\label{lem:semicontfun} 
Let $g \colon Z \to [\tau,\infty)$ be a nonnegative lower semicontinuous function on 
a proper space $Z$ for some $\tau>0$, and assume that $x_n \to x$ and $y_n \to y$ are 
sequences of points in $Z$. Then,
\begin{equation}\label{eq:semicont-Fg}
\mathcal{F}_{g}(x,y) \leq \liminf_{n \to \infty}  \mathcal{F}_{g}(x_n,y_n).
\end{equation}
\end{lemma}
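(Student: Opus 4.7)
The plan is to construct a limit curve from near-optimal curves joining $x_n$ to $y_n$, using the Arzel\`a-Ascoli-type compactness of Lemma~\ref{lem:semicontcurv}, and then to exploit the positive lower bound $\tau$ to ensure the relevant curves have uniformly bounded length.

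First, I would handle the trivial cases: if $x = y$ then $\mathcal{F}_g(x,y) = 0$ and the inequality is immediate, and if $\liminf_n \mathcal{F}_g(x_n,y_n) = \infty$ there is nothing to prove. So I would assume $x \ne y$ and that $L := \liminf_n \mathcal{F}_g(x_n,y_n) < \infty$. After passing to a subsequence (not relabeled), we may assume $\mathcal{F}_g(x_n,y_n) \to L$. Then for each $n$ I would pick an arclength parametrized rectifiable curve $\gamma_n$ from $x_n$ to $y_n$ satisfying
\[
\int_{\gamma_n} g \, ds \leq \mathcal{F}_g(x_n,y_n) + \tfrac{1}{n}.
\]

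The key observation is that since $g \geq \tau > 0$, the length of $\gamma_n$ satisfies
\[
\ell(\gamma_n) \leq \tau^{-1}\int_{\gamma_n} g\, ds \leq \tau^{-1}\bigl(L + 2\bigr)
\]
for all sufficiently large $n$. Since the starting points $x_n$ converge to $x$, all the curves $\gamma_n$ (for $n$ large) lie in a fixed bounded set, and by properness of $Z$ this set is contained in a compact subset of $Z$. Lemma~\ref{lem:semicontcurv} then supplies a rectifiable curve $\gamma_\8$ and a further subsequence along which $\gamma_n \to \gamma_\8$ uniformly, with
\[
\int_{\gamma_\8} g\, ds \leq \liminf_{n\to\infty} \int_{\gamma_n} g\, ds = L.
\]

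The main point to verify is that $\gamma_\8$ actually joins $x$ to $y$. This follows from the uniform convergence: the initial point of $\gamma_\8$ equals $\lim_n \gamma_n(0) = \lim_n x_n = x$, and the terminal point equals $\lim_n y_n = y$ (keeping in mind that the lengths $\ell(\gamma_n)$ converge along a further subsequence to $\ell(\gamma_\8)$, so the parametrizations are compatible in the limit). Having $\gamma_\8$ as an admissible competitor in the definition of $\mathcal{F}_g(x,y)$, we conclude
\[
\mathcal{F}_g(x,y) \leq \int_{\gamma_\8} g\, ds \leq L = \liminf_{n\to\infty}\mathcal{F}_g(x_n,y_n).
\]
The only subtle point — and the step most worth double checking — is the endpoint identification of $\gamma_\8$; the positive lower bound $\tau$ on $g$ is essential precisely because without it the length of the $\gamma_n$ could blow up and no subsequential uniform limit would exist.
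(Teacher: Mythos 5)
Your argument is essentially the same as the paper's: pass to a subsequence realizing the liminf as a finite $L$, choose near-optimal curves $\gamma_n$, use the lower bound $g\ge\tau$ to get a uniform length bound (hence confinement to a compact set by properness), and then apply Lemma~\ref{lem:semicontcurv} to extract a uniform limit curve and pass to the limit in the integral. The only cosmetic difference is that you use $\epsilon_n=1/n$ whereas the paper fixes $\epsilon$ and lets it tend to $0$ at the end; your explicit endpoint check for $\gamma_\8$ is a sensible point to spell out, though the paper leaves it implicit.
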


Note that we avoid assuming $Z$ has any rectifiable curves, or that it is quasiconvex. This is 
necessary for our application where $Z$ is the 
proper metric space into which the sequence of scaled spaces $X_i$ and the tangent space $X_\8$ embed isometrically
as described in the latter part of Remark~\ref{rmk:control-eps}.

\begin{proof}
If the limit infimum on the right hand side of~\eqref{eq:semicont-Fg} is infinite, there is nothing to prove. So 
we will assume that it is finite.
By passing to a subsequence, we can 
assume that there is some real number $M>0$ such that 
$\mathcal{F}_{g}(x_n,y_n)\le M$ for all $n$. Then for every $0< \epsilon < M$, there exist curves $\gamma_n$ connecting 
$x_n$ and $y_n$ such that

\[
\tau \ell(\gamma_n)\le \int_{\gamma_n} g ~ds \leq  \mathcal{F}_{g}(x_n,y_n)  + \epsilon \leq 2M.
\]
Since $\gamma_n$ connects $x_n$ to $y_n$, and these 
converge, respectively, to $x$ and $y$, the curves $\gamma_n$ lie, for sufficiently large $n$, in the closed ball 
$\overline{B(x,M+2M/\tau)}$ which is compact.  
Then, by Lemma~\ref{lem:semicontcurv}, by taking a subsequence if necessary, the sequence $\gamma_n$ converges to 
some curve $\gamma_\8$, and 
\[
\mathcal{F}_{g}(x,y) \leq \int_{\gamma_\8} g \, ds \leq \liminf_{n \to \infty}  \int_{\gamma_n} g \, ds 
\leq  \liminf_{n \to \infty}   \mathcal{F}_{g}(x_n,y_n)  + \epsilon.
\]
Since this holds for every small $\epsilon>0$ the claim follows.
\end{proof}

\begin{lemma}\label{lem:lifting} 
Let $(X_i, d_i, x_i, \mu_i) \to (X_\8, d_\8, x_\8, \mu_\8)$ be a sequence of scaled (from $X$) metric measure 
spaces converging in the pointed measured Gromov-Hausdorff sense. If $f$ is a nonnegative 
Lipschitz function on $X_\8$, with a
bounded Lipschitz upper gradient $v$, then there exists a 
subsequence, also denoted $(X_i,d_i,x_i, \mu_i)$, and uniformly Lipschitz continuous functions
$f_i$ with Lipschitz continuous upper gradients $v_i$ on $X_i$ such that
\[
v_{i} \, d\mu_{i} \overset{*}{\rightharpoonup} v \, d\mu_\8,
\]
and $f$ is a limit function of $f_{i}$ in the sense of~\eqref{eq:zoomlimit-2}. 
\end{lemma}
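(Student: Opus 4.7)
The strategy is to lift the upper gradient $v$ from $X_\8$ to each $X_i$ via the ambient proper space $Z$ of Remark~\ref{rmk:control-eps}, and then to construct $f_i$ on $X_i$ by a McShane-type infimum formula with respect to the lifted upper gradient, so that the upper-gradient property is forced by construction; this is analogous to the argument in Lemma~\ref{lem:lsc-to-Lipschitz} and the proof of Proposition~\ref{thm:continuousuppergrad}.

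Concretely, first extend $v\colon X_\8 \to [0,\infty)$ to a bounded nonnegative Lipschitz function $V\colon Z \to [0,\infty)$ with the same Lipschitz constant and $L^\infty$-bound, by McShane extension followed by truncation. Pick a positive sequence $\eta_i \searrow 0$ and set
\[
v_i := (V + \eta_i) \circ \iota_i \quad \text{on } X_i.
\]
Each $v_i$ is then Lipschitz on $X_i$ with a uniform constant, uniformly bounded, and $v_i \geq \eta_i > 0$. Since $V$ is a fixed bounded continuous function on $Z$, $\iota_{i,*}\mu_i \overset{*}{\rightharpoonup} \iota_*\mu_\8$, and $\eta_i \to 0$, one obtains $\iota_{i,*}(v_i\, d\mu_i) = (V+\eta_i)\, d(\iota_{i,*}\mu_i) \overset{*}{\rightharpoonup} V\, d(\iota_*\mu_\8) = \iota_*(v\, d\mu_\8)$, i.e., $v_i\, d\mu_i \overset{*}{\rightharpoonup} v\, d\mu_\8$. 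Next fix a countable dense set $D = \{p_k\}_{k \in \N} \subset X_\8$, and for each $k$ use Definition~\ref{pGH} to choose $p_k^i \in X_i$ with $\iota_i(p_k^i) \to \iota(p_k)$ in $Z$ (arranging uniformity in $k$ by a diagonal). With a suitable sequence $N_i \nearrow \infty$, set
\[
f_i(x) := \min_{k \leq N_i}\, \bigl[\, f(p_k) + \mathcal{F}^{X_i}_{v_i}(x, p_k^i)\,\bigr], \qquad x \in X_i.
\]
Each map $x \mapsto \mathcal{F}^{X_i}_{v_i}(x, p_k^i)$ is $\|v_i\|_\8$-Lipschitz (as $X_i$ is geodesic) and admits $v_i$ as an upper gradient; both properties survive the finite minimum, so $f_i$ is Lipschitz on $X_i$ with Lipschitz upper gradient $v_i$.

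The remaining task, and the main work, is showing that $f$ is a limit of $\{f_i\}$ in the sense of Definition~\ref{def:conv-functions}. Fix $R > 0$ and take $y \in B_{X_\8}(x_\8, R)$ with $y_i \in X_i$ such that $\iota_i(y_i) \to \iota(y)$ in $Z$. The upper bound $\limsup_i f_i(y_i) \leq f(y)$ follows by testing the infimum with $p_k \in D$ close to $y$ (so that $k \leq N_i$ eventually), approximating a near-optimal curve in $X_\8$ for $\mathcal{F}^{X_\8}_v(y, p_k)$ by geodesic polygons in $X_i$ via the pointed Gromov-Hausdorff convergence and the geodesicity of each $X_i$, and using the uniform continuity of $V$ on $Z$ to pass the curve integrals to the limit.

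For the lower bound, use that $v$ is an upper gradient of $f$ on $X_\8$ to write $f(p_k) \geq f(y) - \mathcal{F}^{X_\8}_v(y, p_k)$, so it suffices to establish $\liminf_i \mathcal{F}^{X_i}_{v_i}(y_i, p_k^i) \geq \mathcal{F}^{X_\8}_v(y, p_k)$ for each fixed $k$. This is the main obstacle. Near-optimal curves $\gamma_i$ in $X_i$ embed isometrically via $\iota_i$ as curves in $Z$; by diagonalizing the scales involved (controlling $\eta_i$ relative to the diameters of interest) and comparing $\int_{\gamma_i} v_i\, ds$ with the trivial bound $\eta_i\, \ell(\gamma_i)$, one keeps the lengths of $\gamma_i$ in $Z$ uniformly bounded along a subsequence. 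Lemma~\ref{lem:semicontcurv} applied to $V$ on $Z$ then extracts a uniformly convergent subsequence with limit curve $\gamma_\8$ in $Z$. The crucial observation is that $\gamma_\8 \subset \iota(X_\8)$: by the first characterization of pointed Gromov-Hausdorff convergence recalled in Remark~\ref{rmk:control-eps} (Keith's formulation), points of $\iota_i(X_i)$ in any bounded region of $Z$ lie within $o(1)$ of $\iota(X_\8)$, and since $X_\8$ is proper, $\iota(X_\8)$ is closed in $Z$, forcing uniform limits of curves in $\iota_i(X_i)$ to stay inside $\iota(X_\8)$. Pulling $\gamma_\8$ back by $\iota^{-1}$ produces a curve in $X_\8$ joining $y$ to $p_k$, and Lemma~\ref{lem:semicontcurv} together with the isometric embedding gives $\mathcal{F}^{X_\8}_v(y, p_k) \leq \int_{\gamma_\8} V\, d\hat s \leq \liminf_i \int_{\gamma_i} v_i\, ds$, closing the argument.
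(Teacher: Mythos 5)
Your overall strategy matches the paper's: extend the upper gradient to the ambient space $Z$ and lift it to the $X_i$, define $f_i$ by a McShane-type infimum formula so that the upper-gradient property is automatic, and establish the convergence $f_i \to f$ via the lower semicontinuity of $\mathcal{F}_g$ under Gromov--Hausdorff convergence (Lemma~\ref{lem:semicontcurv}), together with the observation that a uniform limit of curves in $\iota_i(X_i)$ must land in $\iota(X_\8)$. The paper in fact does exactly this, except that it takes the infimum over all of $X_i$ against the McShane extension $\hat f$ of $f$ to $Z$ rather than over a finite lifted net (your net-based variant is workable but requires additionally arranging that the lifts $p_k^i$ converge to $p_k$ uniformly over $k \le N_i$, since the minimizing index $k(i)$ varies with $i$).

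However, there is a genuine gap in how you make the lifted upper gradients positive. You set $v_i := (V + \eta_i)\circ\iota_i$ with $\eta_i \searrow 0$. For the lower bound on $\liminf_i f_i(y_i)$ you then need to apply Lemma~\ref{lem:semicontcurv} to near-optimal curves $\gamma_i$ for $\mathcal{F}^{X_i}_{v_i}(y_i,p_{k(i)}^i)$; that lemma requires curves of \emph{uniformly bounded length} contained in a fixed compact subset of $Z$. The only handle on $\ell(\gamma_i)$ is the trivial bound $\eta_i\,\ell(\gamma_i) \le \int_{\gamma_i} v_i\,ds \le M$, which gives $\ell(\gamma_i) \le M/\eta_i \to \infty$. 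Your suggestion to ``control $\eta_i$ relative to the diameters of interest'' does not repair this: the relevant diameters $d_i(y_i,p_k^i)$ converge to a fixed quantity as $i\to\infty$, whereas the problematic curves are ones that meander in regions where $v_i$ is small, and such curves can be arbitrarily long precisely because $\eta_i$ shrinks. The paper avoids this by a two-step reduction: first assume $v \ge \tau$ for a \emph{fixed} $\tau>0$ (so $v_i\ge\tau$ and $\ell(\gamma_i)\le M/\tau$ uniformly), prove the lemma under this hypothesis, and only then diagonalize over a sequence $\tau = 1/k \to 0$, using $\max\{v,1/k\}$ in place of $v$. Your proof needs to be restructured in this way — the $\tau$-reduction must sit outside the main compactness argument, not be woven into it via $\eta_i \to 0$.
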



\begin{proof} 
Without loss of generality, we can assume that $v\ge \tau$ for some positive $\tau$, 
since otherwise we can obtain the result by considering $\max\{v,1/k\}$ instead of $v$
for each positive integer $k$, and then complete the proof with the help of a diagonalization
argument, letting $k\to\infty$.

Let $\hat{v}\colon Z\to\R$ be a McShane extension of the Lipschitz function $v\circ\iota\vert_{\iota(X_\8)}^{-1}$
on $\iota(X_\8)$ to  the entirety of $Z$.  Also, such an extension 
can be chosen to be bounded and so that $\hat{v} \geq \tau$.
Let $v_i := \hat{v} \circ \iota_i\colon  X_i\to\R$. 

Next, let $\hat{f} \colon Z \to \R$ be constructed similarly, by first setting $\hat{f}(z)  := f \circ \iota^{-1}(z)$ for 
$z \in \iota(X_\8)$, and then taking a McShane extension to $Z$. We can choose $\hat{f}$ to be nonnegative.  
Next we construct the functions $f_i\colon X_i\to\R$ so that $v_i$ is an upper gradient of $f_i$ as follows.
For $x \in X_i$ we set
\[
f_i(x) := \inf_{y  \in X_i} [\hat{f}(\iota_i(y)) + \mathcal{F}^{\iota_i(X_i)}_{\hat{v}}(\iota_i(y),\iota_i(x))]
  = \inf_{y  \in X_i} [\hat{f}(\iota_i(y)) + \mathcal{F}^{X_i}_{v_i}(y,x)].
\]
For ease of notation, we set $\mathcal{F}_{v_i}(x,y):=\mathcal{F}^{\iota_i(X_i)}_{\hat{v}}(\iota_i(y),\iota_i(x))$
for $x,y\in X_i$.
From the definition of $f_i$ it is clear that $f_i(x) \leq  \hat{f}(\iota_i(x))$ for each $x\in X_i$. 
Also, $f_i$ is nonnegative, and has  $v_i$ as an upper gradient. 

 We will now show that $f$ is a limit function of $f_i$. To do so, we need to show for every $r>0$,
\[
 \lim_{i \to \infty} \| f - f_{i}\circ \phi_{i} \|_{L^\8 (B_{X_\8}(x_\8, r))}=0,
 \]
where $\phi_i$ are the approximating maps from Definition~\ref{pGH}.  Suppose this is not the case. 
Then there is some $r>0$ and some $\delta>0$ such that, by passing to a subsequence if needed, 
we have 
\begin{equation}\label{eq:contradicting-hyp}
\liminf_{i \to \infty} \| f - f_{i}\circ \phi_{i} \|_{L^\8 (B_{X_\8}(x_\8, r))}> \delta.
\end{equation}
Thus, for each $i$ there is a point $x_i \in B_{X_\8}(x_\8, r)$ such that 
\begin{equation}\label{eq:contradicting-hyp cons}
| f(x_i)- f_{i}(\phi_{i}(x_i))| > \delta.
\end{equation}
Since $X_\8$ is proper and $x_i\in B_{X_\8}(x_\8,r)$ for all $i$, there is a subsequence,
also denoted with the index $i$, such that $x_i\to x\in X_\8$.
Fix $\delta>0$. Then from the definition of 
$f_{i}(\phi_{i}(x_i))$, we have $y_i \in X_i$ such that 
\begin{equation}\label{eq:using def of fi}
| f_{i}(\phi_{i}(x_i)) - \hat{f}(\iota_i(y_i)) - \mathcal{F}_{v_i}(y_i,\phi_{i}(x_i)) | \leq \delta/4.
\end{equation}
Combining the above with \eqref{eq:contradicting-hyp cons} we get
\[
| f(x_i)-\hat{f}(\iota_i(y_i)) - \mathcal{F}_{v_i}(y_i,\phi_{i}(x_i))| \geq \delta/2.
\]
By \eqref{eq:using def of fi} we have
$\hat{f}(\iota_i(y_i))+ \mathcal{F}_{v_i}(y_i,\phi_{i}(x_i))
\le f_{i}(\phi_{i}(x_i))+\delta/4
\le \hat{f}(\iota_i(\phi_i(x_i)))+\delta/4$, 
and so the triangle inequality gives
\begin{equation}\label{eq:badlimest}
 | f(x_i)-\hat{f}(\iota_i(\phi_i(x_i)))| +\hat{f}(\iota_i(\phi_i(x_i)))-\hat{f}(\iota_i(y_i)) - \mathcal{F}_{v_i}(y_i,\phi_{i}(x_i))\ge \delta/4.
\end{equation}
For the first term, note that $f(x_i) = \hat{f}(\iota(x_i))$, and from~\eqref{eq:compatibile-phi-iota} we get
 $\lim_{i \to \infty} d_Z(\iota(x_i), \iota_i(\phi_i(x_i))) = 0$, and thus from the Lipschitz continuity of $\hat{f}$,
\begin{equation} \label{eq:limfhat}
\lim_{i \to \infty} | \hat{f}(\iota(x_i))-\hat{f}(\iota_i(\phi_i(x_i)))| =0.
\end{equation}
Since $\lim_ix_i=x$, we also have  
\begin{align}\label{eq:bdd-phi-xi}
d_Z(\iota_i(\phi_i(x_i)),\iota(x))&\le d_Z(\iota_i(\phi_i(x_i)),\iota(x_i))+d_Z(\iota(x_i),\iota(x))\notag\\
 &=d_Z(\iota_i(\phi_i(x_i)),\iota(x_i))+d_{X_\8}(x_i,x)\to 0\quad \text{ as }i\to\8.
\end{align}
Therefore the sequence  of real numbers
$\hat{f}(\iota_i(\phi_{i}(x_i)))$ is bounded, that is, there is some
$M>\delta>0$ such  that 
$\sup_i \hat{f}(\iota_i(\phi_{i}(x_i)))\le M$.
The functions $v_i$ are bounded from below by $\tau$ and $f$ is nonnegative. Therefore, if $d(\phi_{i}(x_i),y_i) >   2M/\tau$, then
\[
\hat{f}(\iota_i(y_i)) + \mathcal{F}_{v_i}(y_i,\phi_{i}(x_i)) \geq    \mathcal{F}^{X_i}_{v_i}(y_i,\phi_{i}(x_i)) 
\geq 2M > \hat{f}(\iota_i(\phi_{i}(x_i)))+\delta \ge f_{i}(\phi_{i}(x_i))+\delta,
\]
which would violate the choice of $y_i$, \eqref{eq:using def of fi}. Hence
we must have $d(\phi_{i}(x_i),y_i) \leq   2M/\tau$. 
As the sequence $\iota_i(\phi_i(x_i))$ lies in a ball, in $Z$, centered at $\iota(x)$ by~\eqref{eq:bdd-phi-xi}, we see then
that the sequence $\iota_i(y_i)$ also lies in a ball centered at $\iota(x)$.
Therefore, by the properness of $Z$, there is a subsequence, also denoted with the index $i$, and a point
$\hat{y}\in Z$ such that $\lim_i\iota_i(y_i)=\hat{y}$. As $y_i\in X_i$ and $X_i$ converges to the metric space $X_\8$, it follows
that $\hat{y}=\iota(y)$ for some $y\in X_\8$.
Then by Lemma~\ref{lem:semicontfun} we get
$\mathcal{F}^Z_{\hat{v}}(\iota(x),\iota(y)) \leq  \liminf_{i\to \infty} \mathcal{F}^Z_{\hat{v}}(\iota_i(\phi_i(x_i)),\iota_i(y_i))$.
Note that 
$\mathcal{F}_{v_i}(y_i,\phi_i(x_i))=\mathcal{F}^{\iota_i(X_i)}_{\hat{v}}(\iota_i(y_i),\iota_i(\phi_i(x_i)))$, which is not the
same as $\mathcal{F}^Z_{\hat{v}}(\iota_i(y_i),\iota_i(\phi_i(x_i)))$. However, we have that
$\mathcal{F}^Z_{\hat{v}}(\iota_i(y_i),\iota_i(\phi_i(x_i)))\le \mathcal{F}^{\iota_i(X_i)}_{\hat{v}}(\iota_i(y_i),\iota_i(\phi_i(x_i)))$.
Now by~\eqref{eq:badlimest} and \eqref{eq:limfhat}, we obtain
\begin{align*}
\tfrac{\delta}{4}+\mathcal{F}^Z_{\hat{v}}(\iota(x),\iota(y)) &\le 
\tfrac{\delta}{4}+\liminf_i\mathcal{F}^Z_{\hat{v}}(\iota_i(y_i),\iota_i(\phi_i(x_i)))\\
&\le \lim_i[\hat{f}(\iota_i(\phi_i(x_i)))-\hat{f}(\iota_i(y_i))]=\hat{f}(\iota(x))-\hat{f}(\iota(y))=f(x)-f(y).
\end{align*}
We now use the specific structure of $Z$; by~\cite{Her}, we can choose $Z$ to be 
the completion of pairwise disjoint union of $X_i$, $i\in\N$. With such a choice, 
it follows that if $\gamma$ is a non-constant rectifiable curve in $Z$, then either $\gamma$ lies entirely in $\iota_i(X_i)$ for some
positive integer $i$, or else $\gamma$ lies entirely in $\iota(X_\8)$. It follows that 
\[
\mathcal{F}^Z_{\hat{v}}(\iota(x),\iota(y))=\mathcal{F}^{\iota(X_\8)}_{\hat{v}}(\iota(x),\iota(y))
    =\mathcal{F}^{X_\8}_v(x,y).
\]
Hence from the above inequality we obtain 
\[
\mathcal{F}^{X_\8}_v(x,y)<\tfrac{\delta}{{4}}+\mathcal{F}^{X_\8}_v(x,y)\le f(x)-f(y)\le |f(x)-f(y)|,
\]
which is not possible as $v$ is an upper gradient of $f$. Thus~\eqref{eq:contradicting-hyp} is false, and so
$f=\lim_if_i$ as desired.

Finally, we show that
$v_{i} \, d\mu_{i} \overset{*}{\rightharpoonup} v \, d\mu_\8$ as follows.
Pick a test function $\phi\in C_c(Z)$. Then also $\phi\hat{v}\in C_c(Z)$.
Using this fact and the fact that $\iota_{i,*}\mu_{i}\overset{*}{\rightharpoonup}
\iota_*\mu_{\infty}$,
we get
\[
\lim_{i\to\infty}\int_Z \phi \,\iota_{i,*}(v_i\,d\mu_{i})=
\lim_{i\to\infty}\int_Z \phi \,\hat{v}\,d\iota_{i,*}\mu_{i}
=\int_Z \phi \,\hat{v}\,d \iota_*\mu_{\infty}
=\int_Z \phi \,\iota_{*}(v\,d\mu_{\infty}).
\]
\end{proof}



We also need the following lemma, which stitches two given BV functions along an annulus to yield a BV function
whose BV energy is controllable.

\begin{lemma}{{\rm\bf{\cite[Lemma~3.3]{Mir}}}}\label{lem:merging} 
Let $f \in BV(X)$, $x\in X$, $0<a<b\le R$, and $g \in BV(B(x, b))$. Then 
there exists a $2/(b-a)$-Lipschitz function $\eta : X \to [0,1]$ with compact support in $B(x, b)$, and such that $\eta=1$ 
on $B(x,a)$ such that $h= \eta g + (1-\eta) f\in BV(X)$ with
\[
V(h, B(x,R)) \leq V(f,B(x,R) \setminus \overline{B(x,a)}) + V(g, B(x,b)) 
  + \frac{2}{b-a} \int_{B(x, b) \setminus B(x, a)} |f-g| ~d\mu.
\]
\end{lemma}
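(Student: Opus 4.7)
The plan is to realize $h$ as the $L^1_\loc$-limit of explicit Lipschitz functions $h_i:=\eta g_i+(1-\eta)f_i$, and to control $V(h,B(x,R))$ from above via the lower-semicontinuity definition of the total variation combined with a Leibniz-type pointwise bound on $\lip h_i$.

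First I would take the explicit radial cutoff $\eta(y):=\max\{0,\min\{1,(b-d(x,y))/(b-a)\}\}$, which is $1/(b-a)$-Lipschitz (a fortiori $2/(b-a)$-Lipschitz), equals $1$ on $B(x,a)$, has support in $\overline{B(x,b)}$, and whose pointwise Lipschitz constant $\lip\eta$ vanishes off the annulus $\overline{B(x,b)}\setminus B(x,a)$. Applying the very definition of total variation to the open sets $B(x,b)$ and $B(x,R)\setminus\overline{B(x,a)}$, for a prescribed $\epsilon>0$ I would select Lipschitz sequences: $g_i$ on $B(x,b)$ with $g_i\to g$ in $L^1(B(x,b))$ and $\liminf_i\int_{B(x,b)}\lip g_i\,d\mu\le V(g,B(x,b))+\epsilon$, and $f_i\in\Lip_\loc(X)$ with $f_i\to f$ in $L^1_\loc(X)$ and $\liminf_i\int_{B(x,R)\setminus\overline{B(x,a)}}\lip f_i\,d\mu\le V(f,B(x,R)\setminus\overline{B(x,a)})+\epsilon$. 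Since a Lipschitz function on a bounded ball is bounded, $\eta g_i$ extends Lipschitz-continuously by $0$ across $\partial B(x,b)$, so $h_i$ is Lipschitz on all of $X$ and $h_i\to h$ in $L^1_\loc(X)$.

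The key step is the pointwise estimate coming from the algebraic identity
\begin{equation*}
h_i(z)-h_i(y)=\eta(z)\bigl(g_i(z)-g_i(y)\bigr)+(1-\eta(z))\bigl(f_i(z)-f_i(y)\bigr)+\bigl(\eta(z)-\eta(y)\bigr)\bigl(g_i(y)-f_i(y)\bigr).
\end{equation*}
Dividing by $d(z,y)$, letting $z\to y$, and using continuity of $\eta$ gives
\begin{equation*}
\lip h_i(y)\le \eta(y)\,\lip g_i(y)+(1-\eta(y))\,\lip f_i(y)+\lip\eta(y)\,|g_i(y)-f_i(y)|.
\end{equation*}
Because $\eta$ is supported in $\overline{B(x,b)}$, $1-\eta$ vanishes on $B(x,a)$, and $\lip\eta\le 2/(b-a)$ is supported in $B(x,b)\setminus B(x,a)$, integrating this estimate over $B(x,R)$ yields
\begin{equation*}
\int_{B(x,R)}\lip h_i\,d\mu\le\int_{B(x,b)}\lip g_i\,d\mu+\int_{B(x,R)\setminus\overline{B(x,a)}}\lip f_i\,d\mu+\frac{2}{b-a}\int_{B(x,b)\setminus B(x,a)}|g_i-f_i|\,d\mu.
\end{equation*}
Taking $\liminf_i$, invoking the defining lower semicontinuity of $V(h,B(x,R))$ tested against the admissible sequence $h_i$, together with the $L^1$ convergence of $f_i,g_i$ and the choices made for the approximating sequences, gives the claimed inequality up to an additive $2\epsilon$; letting $\epsilon\to 0$ closes the argument.

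The main point of care is the pointwise Leibniz estimate on the annulus: it is what makes the $f$ contribution restrict automatically to $B(x,R)\setminus\overline{B(x,a)}$ and the $g$ contribution stay confined to $B(x,b)$, so that neither total variation ``double counts'' the overlap. All boundary issues (the extension of $\eta g_i$ by zero past $\partial B(x,b)$, and the fact that $\partial B(x,a)$ is $\mu$-negligible so that one may freely switch between $B(x,R)\setminus B(x,a)$ and $B(x,R)\setminus\overline{B(x,a)}$) are either routine or a direct consequence of the annular decay property~\eqref{eq:annular decay property} recorded earlier in the paper.
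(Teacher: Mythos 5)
The paper itself supplies no proof here---it cites \cite[Lemma~3.3]{Mir} verbatim---so your argument is a reconstruction, and it does follow the standard cutoff/Leibniz/lower-semicontinuity route that Miranda's original proof takes. The algebraic identity, the resulting pointwise bound, and the way the three terms localize to $B(x,b)$, $B(x,R)\setminus\overline{B(x,a)}$, and the annulus are all correct in spirit. There are, however, three technical points where the write-up as given does not quite close.

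First, $\lip$ is defined via a $\liminf$, and $\liminf$ is \emph{superadditive}, not subadditive, so from $D_r\le A_r+B_r+C_r$ you cannot conclude $\liminf D_r\le\liminf A_r+\liminf B_r+\liminf C_r$. You should either pass through $\Lip$ (a $\limsup$, which \emph{is} subadditive) and then invoke Cheeger's theorem that $\lip u=\Lip u$ $\mu$-a.e.\ for Lipschitz $u$ on a PI space, or---cleaner, and independent of Cheeger---show directly that $\rho_i:=\eta\,\lip g_i+(1-\eta)\,\lip f_i+\lip\eta\cdot|g_i-f_i|$ is an upper gradient of $h_i$: along any rectifiable $\gamma$ the compositions with $\gamma$ are absolutely continuous, the product rule gives $(h_i\circ\gamma)'=\eta(g_i\circ\gamma)'+(1-\eta)(f_i\circ\gamma)'+(\eta\circ\gamma)'(g_i-f_i)$ a.e., and $|(u\circ\gamma)'|\le\lip u(\gamma)$ a.e.\ because $\lip u$ is an upper gradient. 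Then $V(h,B(x,R))\le\liminf_i\int_{B(x,R)}\rho_i\,d\mu$ by the $g_{u_i}$-version of the total-variation definition, and the rest of the computation goes through.

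Second, your explicit $\eta$ has $\supp\eta=\overline{B(x,b)}$, which is \emph{not} compact support in $B(x,b)$ as the lemma requires. This is more than cosmetic: the transition region of your $\eta$ sits exactly on $\partial B(x,a)$ and $\partial B(x,b)$, which is the degenerate case. Shrink to a cutoff that is $1$ on $\overline{B}(x,a')$ and $0$ off $B(x,b')$ with $a<a'<b'<b$ and $b'-a'=(b-a)/2$; this is exactly what the budget of $2/(b-a)$ for the Lipschitz constant is for. With the transition region compactly contained in the open annulus, the extension of $\eta g_i$ across $\partial B(x,b)$ is automatic and the $L^1_\loc$ convergence $h_i\to h$ only needs convergence of $g_i$, $f_i$ on relatively compact subsets.

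Third, the claim that one can pick $f_i\in\Lip_\loc(X)$, converging to $f$ in $L^1_\loc(X)$, with $\liminf_i\int_{W}\lip f_i\,d\mu\le V(f,W)+\epsilon$ for the fixed open set $W=B(x,R)\setminus\overline{B(x,a)}$ is not immediate: the infimum defining $V(f,W)$ ranges over $\Lip_\loc(W)$, not $\Lip_\loc(X)$. You can get around this either by (a) using the weak* approximation of Lemma~\ref{lem:conv-of-measures}, namely $\lip f_i\,d\mu\overset{*}\rightharpoonup dV(f,\cdot)$, which gives $\limsup_i\int_K\lip f_i\le V(f,K)$ for compact $K$, and then noting that with the shrunk $\eta$ the $f$-term integrates only over a compact $K\Subset B(x,R)\setminus\overline{B}(x,a)$, so $V(f,K)\le V(f,W)$; or (b) working directly with $f_i\in\Lip_\loc(W)$, which is again only safe once $\eta$ transitions strictly inside the annulus, so that $(1-\eta)f_i$ vanishes near $\partial B(x,a)$ even though $f_i$ may be unbounded there. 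In both cases the cure is the same shrinking of $\eta$ flagged above.

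In short: right approach, correct skeleton, but the $\lip$-subadditivity step needs the upper-gradient (or $\Lip$ plus Cheeger) argument, and the boundary/approximation issues all trace to the choice of $\eta$ transitioning exactly at radii $a$ and $b$ rather than strictly inside $(a,b)$.
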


Finally, we can conclude the proof of Theorem~\ref{thm:minimaltangent}. 


\begin{proof}[Proof of Theorem~\ref{thm:minimaltangent}]
Let $x \in\Sigma_\gamma$ be a point where the conclusions of Theorem~\ref{Asymptotic-FinitePerimeter}, 
Lemma~\ref{lem:perim-growth-bounds} and 
Theorem~\ref{thm:main3} hold. We will show that the corresponding asymptotic set $(E)_\8$ is 
$K$-quasiminimal for some $K$, which will be determined at the end of the proof. 
Since $P(E, \cdot)$-almost every $x \in X$ is such a point, this concludes the proof. 
Let $R>0$,  $z\in X_\8$,
and $\pip\in BV_c(B_{X_\8}(z,R))$. By slightly decreasing $R$ if necessary, we can assume that 
$P((E)_\8, \partial B_{X_\8}(z,R))=0$.

From Theorem~\ref{Asymptotic-FinitePerimeter}, there is some $r_0>0$ such that
that for every $r_0>r>0$ there is some positive $\eps_r$ such that 
$\lim_{r\to 0^+}\eps_r=0$ and whenever 
$\psi\in BV_c(B_{X}(x,r))$, we have
\begin{equation}\label{eq:asymp-to-local}
P(E,B_X(x,r))\le (1+\eps_r)\, V(\chi_E+\psi, B_X(x,r)).
\end{equation}
By a standard truncation argument, we can assume without loss of generality that 
$0\leq  \chi_{(E)_\8} + \pip \leq 1$.
By Proposition~\ref{thm:continuousuppergrad} we can find a sequence of Lipschitz function--Lipschitz upper gradient
pairs $f_i, v_i$ on $X_\8$, with each $v_i$ bounded,
such that $f_i\to \chi_{(E)_\8}+\pip$ in $L^1_\loc(X_\8)$ and 
$v_i\, d\mu_\8 \overset{*}{\rightharpoonup} dV(\chi_{(E)_\8}+\pip,\cdot)$. Next, for each positive integer $i$
we apply Lemma~\ref{lem:lifting} to obtain lifts $f_{i,n}, v_{i,n}$ to $X_n$ such that 
$v_{i,n}\, d\mu_n\overset{*}{\rightharpoonup} v_i\, d\mu_\8$ and $f_{i,n}\to f_i$. 
Further, by truncating each $f_i$ and $f_{i,n}$, we can also assume $0\leq f_i, f_{i,n} \leq 1$.

By passing to a subsequence of $(X_n, d_n, x,\mu_n)$ if necessary, 
with $\rho$ fixed and chosen in the interval $[2[R+d_{X_\8}(z,x_\8)],3[R+d_{X_\8}(z,x_\8)]]$,
we have $\rho\, r_n<r_0$ and that 
\[
\sup_{y,w\in B_{X_\8}(x_\8,\rho)}|d_{n}(\phi_n(y),\phi_n(w))-d_{X_\8}(y,w)|<\frac1n,
\]
and
\[
B_{n}(x,\rho)\subset \bigcup_{y\in \phi_n(B_{X_\8}(x_\8,\rho+1/n))}B_{n}(y,1/n).
\]
For each $n$ we set $x_n:=\phi_n(z)$. By choosing $\rho$ appropriately, we can also ensure
\begin{equation}\label{eq:not-charge-bdy-pi}
 \pi_{\infty}(\partial B_{X_\8}(x_\8,\rho))=0.
\end{equation}

Then by the above, 
\[
\frac{d_X(x,x_n)}{r_n}=d_{n}(x,x_n)\le d_{X_\8}(x_\8,z)+\frac1n.
\]
Fix $\tau \in (0,1)$. We now use Lemma~\ref{lem:merging}
to stitch $f_{i,n}$ on $B_{n}(x_n,R)$ to $\chi_E$ on $B_{n}(x,\rho)\setminus B_{n}(x_n,[1+\tau]R)$
using the Lipschitz function $\eta_n$ to obtain $h_{i,n}:=\eta_n f_{i,n}+(1-\eta_n)\chi_E$. Then, since 
$\rho\, r_n<r_0$, we know that
\[
P(E,B_X(x,\rho\, r_n))\le (1+\eps_{ \rho \, r_n})\, V(h_{i,n},B_X(x,\rho\, r_n)).
\]
Note by Lemma~\ref{lem:merging} that
\begin{align*}
V(h_{i,n},B_{n}(x,\rho))&\le P_n(E, B_{n}(x,\rho)\setminus B_{n}(x_n,R))
  +V(f_{i,n}, B_{n}(x_n,[1+\tau]R))\\
    &\qquad+\frac{2}{\tau R}\, \int_{B_{n}(x_n,[1+\tau]R)\setminus B_{n}(x_n,R)}|f_{i,n}-\chi_E|\, d\mu_n.
\end{align*}
Note that $h_{i,n} - \chi_E$ has compact support on $B_n(x,\rho)$ for large enough $n$ since we 
can ensure $B_{n}(x_n,[1+\tau]R) \subset B_n(x,\rho)$. Combining this with the (asymptotic) minimality of 
$\chi_E$ at $x$ as explained above, we obtain 
that
\begin{align*}
P_n(E,B_{n}(x,\rho))& \le [1+\eps_{\rho r_n}] \left( P_n(E, B_{n}(x,\rho)\setminus B_{n}(x_n,R))+\int_{B_{n}(x_n,[1+\tau]R)}v_{i,n}\, d\mu_n \right. \\ 
    &\left. \qquad+\frac{2}{\tau R}\, \int_{B_{n}(x_n,[1+\tau]R)\setminus B_{n}(x_n,R)}|f_{i,n}-\chi_E|\, d\mu_n \right).
\end{align*}
In the above, we have also used the fact that as $v_{i,n}$ is an upper gradient of $f_{i,n}$, we have
$dV(f_{i,n},\cdot)\le v_{i,n}\, d\mu_n$. 

Recall that $\chi_E$ is either $0$ or $1$ on $X_\8$, and 
$0\leq f_{i,n}, f_i \leq 1$, 
and so we have
\[
|f_{i,n}-\chi_E| = (1-f_{i,n})\chi_E + (1-\chi_E)f_{i,n}
\]
and
\[
|f_{i}-\chi_{(E)_\8}| = (1-f_{i})\chi_{(E)_\8} + (1-\chi_{(E)_\8})f_{i}.
\]
Thus, since $\chi_E\, d\mu_n \overset{*}{\rightharpoonup} \chi_{E_\8}\, d\mu_\8$
and since $\mu_\8$ gives measure zero to every sphere due to the geodesic property
and doubling of $X_\8$ (recall \eqref{eq:annular decay property}), we get
\begin{align*}
& \lim_{n\to \infty} \int_{B_{n}(x_n,[1+\tau]R)\setminus B_{n}(x_n,R)}|f_{i,n}-\chi_E|\, d\mu_n  \\
&\qquad= \lim_{n\to \infty} \int_{B_{n}(x_n,[1+\tau]R)\setminus B_{n}(x_n,R)}[(1-f_{i,n})\chi_E + (1-\chi_E)f_{i,n}] \, d\mu_n  \\
 &\qquad=\int_{B_{X_\8}(z,[1+\tau]R)\setminus B_{X_\8}(z,R)}[(1-f_{i})\chi_{(E)_\8} + (1-\chi_{(E)_\8})f_{i}] \, d\mu_\8  \\
 &\qquad=\int_{B_{X_\8}(z,[1+\tau]R)\setminus B_{X_\8}(z,R)}|f_i-\chi_{(E)_\8}|\, d\mu_\8.
\end{align*}
Now letting $n\to\infty$ and using~\eqref{eq:not-charge-bdy-pi}, we obtain 
\begin{align*}
\pi_\8(B_{X_\8}(x_\8,\rho))&\le \pi_\8(B_{X_\8}(x_\8,\rho)\setminus B_{X_\8}(z,R)) 
+\int_{B_{X_\8}(z,[1+\tau]R)}v_i\, d\mu_\8\\
&\qquad+ \frac{2}{\tau R}\int_{B_{X_\8}(z,[1+\tau]R)\setminus B_{X_\8}(z,R)}|f_i-\chi_{(E)_\8}|\, d\mu_\8.
\end{align*}
Thus we get
\[
\pi_\8(B_{X_\8}(z,R))\le \int_{B_{X_\8}(z,[1+\tau]R)}v_i\, d\mu_\8
 +\frac{2}{\tau R}\int_{B_{X_\8}(z,[1+\tau]R)\setminus B_{X_\8}(z,R)}|f_i-\chi_{(E)_\8}|\, d\mu_\8.
\]
Now letting $i\to\infty$ gives
\[
\pi_\8(B_{X_\8}(z,R))\le V(\chi_{(E)_\8}+\pip, B_{X_\8}(x,[1+2\tau]R)),
\]
where we used the fact that $f_i\to\chi_{(E)_\8}$ in $L^1_{\loc}(X_\8)$
and $v_i\, d\mu_\8 \overset{*}{\rightharpoonup} dV(\chi_{(E)_\8}+\pip,\cdot)$.
Now letting $\tau\to 0$ and finally using the assumption
$P((E)_\8, \partial B_{X_\8}(z,R))=0$ we obtain
\[
\pi_\8(B_{X_\8}(z,R))\le V(\chi_{(E)_\8}+\pip, B_{X_\8}(z,R)).
\]
Now by Theorem~\ref{thm:main3} we have
\[
P((E)_\8, B_{X_\8}(z,R))\le C\, V(\chi_{(E)_\8}+\pip, B_{X_\8}(z,R)),
\]
where $C$ is the comparison constant that connects $\pi_\8$ to $P((E)_\8,\cdot)$. 
Thus choosing $K=C$ yields the desired outcome. This completes the proof.
\end{proof}

\subsection{Concluding remarks} In Section~4 we have shown that any asymptotic limit, at $\mu$-almost every point, 
of a BV function is a function of least gradient on a corresponding tangent space $X_\8$
and is Lipschitz continuous with a constant minimal $p$-weak upper gradient. In Section~6 we have shown that given
a set $E$ of finite perimeter in $X$, at $\mathcal{H}$-almost every point of its measure-theoretic boundary we have the 
existence of an asymptotic limit set $(E)_\8\subset X_\8$ such that this asymptotic limit set is of quasiminimal boundary
surface (that is, $\chi_{(E)_\8}$ is of quasi-least gradient).

\begin{remark}
If $u\in BV(X)$, from the co-area formula we know that for almost every $t\in\mathbb{R}$ its super-level set 
\[
E_t:=\{x\in X\, :\, u(x)>t\}
\] 
is of finite perimeter in $X$. Let $\R_F$ be the collection of all $t\in\R$ for which $E_t$ is
of finite perimeter, and let $A\subset\R_F$ be a countable dense subset of $\R_F$, and for each $t\in A$ let
$K_t$ be the collection of all points in $X$ at which the conclusion of Theorem~\ref{thm:minimaltangent} fails; then
$\mathcal{H}(\bigcup_{t\in A}K_t)=0$. Let $x\in S_u\setminus\bigcup_{t\in A}K_t$, where $S_u$ is the jump set of $u$.
Note that if $x\in X\setminus\partial^*E_t$, then for every tangent space $X_\8$ based at that point,
the corresponding set $(E_t)_\8$ is either all of $X_\8$ or is empty, and hence does satisfy the conclusion of
Theorem~\ref{thm:minimaltangent}. Thus we have here that $K_t\subset\partial^*E_t$.
A Cantor diagonalization argument gives us for each $t\in A$ an asymptotic limit $(E_t)_\8\subset X_\8$, with 
$X_\8$ a tangent space to $X$ based at $x$, of the set $E_t$. We know then that each $(E_t)_\8$ is of quasiminimal
boundary in $X_\8$ in the sense of~\cite{KKLS}, with the quasiminimality constant $K$ independent of
$t$. Moreover, note that if $t_1,t_2\in A$ such that $t_1<t_2$, then $E_{t_2}\subset E_{t_1}$ and so by
the construction of $(E_t)_\8$ we have that $(E_{t_2})_\8\subset (E_{t_1})_\8$. 
Indeed, by the definition of $(E)_\8$ from 
the discussion before Lemma~\ref{lem:consequences of annular decay}, we have that when $z\in X_\8$ for which
$z\in (E_{t_2})_\8$, we have $\mu_\8^{E_{t_2}}\le \mu_\8^{E_{t_1}}$ on $X_\8$ 
(because $E_{t_2}\subset E_{t_1}\subset X$), and so
\[
1=\lim_{r\to 0^+}\frac{\mu_\8^{E_{t_2}}(B_{X_\8}(z,r))}{\mu_\8(B_{X_\8}(z,r))}
 \le \lim_{r\to 0^+}\frac{\mu_\8^{E_{t_1}}(B_{X_\8}(z,r))}{\mu_\8(B_{X_\8}(z,r))}\le 1,
\]
and so we must have $z\in (E_{t_1})_\8$. In this discussion, recall that we have fixed $x\in S_u\setminus\bigcup_{t\in A}K_t$.
We can now set
\[
u_\8(z):=\sup\{t\in A :\, z\in (E_t)_\8\}.
\]
An argument as in the proof of~\cite[Theorem~4.11]{LMSS} tells us that $u_\8$ is of quasi-least gradient in
$X_\8$. It would be interesting to know in which sense, if any, is this $u_\8$ an asymptotic limit of $u$ at $x$.
\end{remark}

The limit set $E_\8$ is a quasi-minimizer according to Theorem~\ref{thm:minimaltangent} , in contrast to
the minimizer property of the limit $u_\8$ of $u$ at the aboslutely continuous point of $\Vert Du\Vert$, 
see Theorem~\ref{thm:tangentharm}. The next example shows that this disconnect is real and is not
an artifact of our proof.

\begin{Example}\label{ex:nonmin} 
For positive integers $n$ let $a_n=1/n!$ and $b_n=-a_n$.
Let $X=\R$ be equipped with the Euclidean metric and with
a weighted measure $d\mu=w d\mathcal{L}^1$. Let us choose the weight $w$ so that 
\[
 w(x)=\begin{cases} 2&\text{ if }b_{2n-1}<x\le b_{2n}\text{ or }a_{2n}\le x<a_{2n-1},\\
              1&\text{ otherwise. }\end{cases}
\]
Then if we choose the base point $x=0$ and the sequence of scales
$r_n=1/(2n-1)!$, we can see that the limit space $X_\infty=\R$ is equipped with
the measure $\mu_\infty$ given by 
$d\mu_\infty=(\chi_{[-1,1]}+\tfrac12\chi_{\R\setminus[-1,1]})d\mathcal{L}^1$.
If we take $E=(-\infty,0]$, then $E$ is of finite perimeter with perimeter measure $P(E,\cdot)$
the Dirac measure supported at $0$. The limit set $E_\infty=(-\infty,0)$ is quasiminimal, but
is not a minimal set as $F:=(-\infty,1)$ has a smaller perimeter measure; here,
the perimeter measure $P_\infty(E_\infty,X_\infty)=1$ whereas $P_\infty(F,X_\infty)=\tfrac12$, and note
that $E_\infty\Delta F$ is a relatively compact subset of $X_\infty$. 
\end{Example}

\begin{remark}
In Definition~\ref{def:Q-min} of quasiminimality we used balls $B(x,R)$. The study undertaken in~\cite{KKLS} is applicable
to functions satisfying this definition; however, the notion of quasiminimality given in~\cite{KKLS} is slightly stronger,
namely whenever $\varphi$ is a compactly supported $BV$ function on $X$, we have
\[
 V(u,\text{supt}(\pip))\le K\, V(u+\pip, \text{supt}(\pip)).
\]
The proof given in Subsection~6.2 can be easily adapted to prove that $\chi_{(E)_\8}$ satisfies this stronger version,
but the proof gets messy, and hence we gave the relatively more transparent proof showing that $\chi_{(E)_\8}$ satisfies
Definition~\ref{def:Q-min}. To prove the stronger quasiminimality criterion of~\cite{KKLS}, one first modifies the
stitching lemma (Lemma~\ref{lem:merging}) by replacing $B(x,a), B(x,b)$ with open sets $U,V$ with $U\Subset V$ and considering
$\eta$ to be a Lipschitz function with $\eta=1$ on $U$, $\eta=0$ on $X\setminus V$. The term $2/(b-a)$ is then replaced
with a constant $C$ that depends solely on $U,V$. Next, in the proof of quasiminimality, one replaces
$B_{X_\8}({z},[1+\tau]R)$ with $U_\tau$ where $U$ is the support of $\pip$ and $U_\tau=\{y\in X_\8 :\, d_{X_\8}(y, U)<\tau\}$.
In this case, $B_{n}(x_n,[1+\tau]R)$ is replaced with a suitable approximation of $U_\tau$ in $X_n$, ensuring
that this approximating open set is contained within $B_{n}(x,\rho)$ where 
$\rho=2[\text{diam}_{X_\8}(U)+\text{dist}(U,x_\8)]$.
\end{remark} 

\noindent Addresses:\\

\vskip .2cm

\noindent S.E.-B.: Mathematics Department, University of California-Los Angeles, 
520 Portola Plaza, Los Angeles, CA-90025, U.S.A.\\
\\
\noindent E-mail: {\tt syerikss@math.ucla.edu}

\vskip .6cm

\noindent J.T.G.: Department of Mathematics and Statistics, Saint Louis University,
Ritter Hall~307, 220 N. Grand Blvd.,  St. Louis, MO-63103, U.S.A.\\
\\
\noindent E-mail: {\tt jim.gill@slu.edu}

\vskip .6cm

\noindent P.L.: University of Jyvaskyla,
Department of Mathematics and Statistics,
P.O. Box 35, FI-40014 University of Jyvaskyla, Finland.\\
\\
\noindent E-mail: {\tt panu.k.lahti@jyu.fi}

\vskip .6cm

\noindent N.S.: Department of Mathematical Sciences, P.O. Box 210025, 
University of Cincinnati, Cincinnati, OH~45221-0025, U.S.A.\\
\\
\noindent E-mail: {\tt shanmun@uc.edu}

\end{document}